\theoremstyle{plain}
\newtheorem{cor}{Corollary}
\newtheorem{prop}[cor]{Proposition}
\newtheorem{con}[cor]{Control}
\newtheorem{thm}[cor]{Theorem}
\theoremstyle{definition}
\numberwithin{cor}{section}
\numberwithin{equation}{section}
\DeclareMathOperator{\tr}{tr}
\DeclareMathOperator{\C}{C}
\DeclareMathOperator{\BUC}{BUC}
\DeclareMathOperator{\Lip}{Lip}
\DeclareMathOperator{\Supp}{Supp}
\renewcommand{\d}{d} 
\newcommand{\abs}[1]{\lvert#1\rvert}
\newcommand{\norm}[1]{\lVert#1\rVert}
\def\XXint#1#2#3{{\setbox0=\hbox{$#1{#2#3}{\int}$ }
\vcenter{\hbox{$#2#3$ }}\kern-.6\wd0}}
\title{On the Exit Time and Stochastic Homogenization of Isotropic Diffusions in Large Domains}
\author{Benjamin J. Fehrman$^\dagger$}
\thanks{$^\dagger$ This material is based upon work supported by the National Science Foundation Mathematical Sciences Postdoctoral Research Fellowship under Grant Number 1502731.}
\address{Max Planck Institute for Mathematics in the Sciences \\ Inselstra{$\ss$}e 22\\ 04103 Leipzig, Germany}
\email{fehrman@mis.mpg.de}
\date{27/2/2016}
\begin{document}

\maketitle

\begin{abstract}  Stochastic homogenization is achieved for a class of elliptic and parabolic equations describing the lifetime, in large domains, of stationary diffusion processes in random environment which are small, statistically isotropic perturbations of Brownian motion in dimension at least three.  Furthermore, the homogenization is shown to occur with an algebraic rate.  Such processes were first considered in the continuous setting by Sznitman and Zeitouni \cite{SZ}, upon whose results the present work relies strongly, and more recently their smoothed exit distributions from large domains were shown to converge to those of a Brownian motion by the author \cite{F3}.  This work shares in philosophy with \cite{F3}, but requires substantially new methods in order to control the expectation of exit times which are generically unbounded in the microscopic scale due to the emergence of a singular drift in the asymptotic limit.\end{abstract}

\section{Introduction}\label{introduction}

The purpose of this paper is to characterize, in dimensions greater than two, the lifetime of diffusion processes in large domains which are associated to generators of the form \begin{equation}\label{intro_gen}\frac{1}{2}\sum_{i,j=1}^da_{ij}(x,\omega)\frac{\partial^2}{\partial x_i\partial x_j}+\sum_{i=1}^db_i(x,\omega)\frac{\partial}{\partial x_i},\end{equation} where the uniformly elliptic diffusion matrix $A=(a_{ij})$ and drift $b=(b_i)$ are bounded, Lipschitz and describe a stationary, strongly mixing random environment, as indexed by an underlying probability space $(\Omega,\mathcal{F},\mathbb{P})$, which corresponds to a small, statistically isotropic perturbation of Brownian motion.

Precisely, the stationarity is quantified by a measure preserving transformation group $\left\{\tau_x\right\}$ of the probability space which satisfies, for each $x,y\in\mathbb{R}^d$ and $\omega\in\Omega$, \begin{equation}\label{intro_stationary} A(x+y,\omega)=A(x,\tau_y\omega)\;\;\textrm{and}\;\;b(x+y,\omega)=b(x,\tau_y\omega).\end{equation}  The coefficients are statistically isotropic in the sense that, for every orthogonal transformation $r$ of $\mathbb{R}^d$ which preserves the coordinate axes, for each $x\in\mathbb{R}^d$, the random variables \begin{equation}\label{intro_isotropy}(A(rx,\omega), b(rx,\omega))\;\;\textrm{and}\;\;(rA(x,\omega)r^t, rb(x,\omega))\;\;\textrm{have the same law.}\end{equation}  The environment is strongly mixing in the way of a finite range dependence.  Whenever subsets $A,B$ of $\mathbb{R}^d$ are sufficiently separated in space, the sigma algebras \begin{equation}\label{intro_independent}\sigma(A(x,\cdot), b(x,\cdot)\;|\;x\in A)\;\;\textrm{and}\;\;\sigma(A(x,\cdot), b(x,\cdot)\;|\;x\in B)\;\;\textrm{are independent.}\end{equation}  And, finally, there exists a constant $\eta>0$ to be chosen small such that, for every $x\in\mathbb{R}^d$ and $\omega\in\Omega$, \begin{equation}\label{intro_perturbation}\abs{A(x,\omega)-I}<\eta\;\;\textrm{and}\;\;\abs{b(x,\omega)}<\eta,\end{equation} which implies that the stochastic process determined by (\ref{intro_gen}) is a small perturbation of Brownian motion.  Such environments were first considered in the continuous setting by Sznitman and Zeitouni \cite{SZ}, and correspond to the analogue of the discrete framework studied by Bricmont and Kupiainen \cite{BK}.

The lifetime of these processes, for bounded domains $U$ of $\mathbb{R}^d$ satisfying an exterior ball condition, will be understood in terms of solutions to the associated elliptic equation \begin{equation}\label{intro_eq}\left\{\begin{array}{ll} \frac{1}{2}\tr(A(x,\omega)D^2v^\epsilon)+b(x,\omega)\cdot Dv^\epsilon=\epsilon^2 g(\epsilon x) & \textrm{on}\;\; U/\epsilon, \\ v^\epsilon=f(\epsilon x) & \textrm{on}\;\;\partial U/\epsilon,\end{array}\right.\end{equation}  which, writing $E_{x,\omega}$ for the expectation associated to the diffusion in environment $\omega$ beginning from $x$, and writing $\tau^\epsilon$ for the exit time from $U/\epsilon$, admit the representation \begin{equation}\label{intro_rep}v^\epsilon(x)=E_{x,\omega}(f(\epsilon X_{\tau^\epsilon})-\epsilon^2\int_0^{\tau^\epsilon}g(\epsilon X_s)\;ds)\;\;\textrm{on}\;\;\overline{U}/\epsilon.\end{equation}  Observe that the rescaled $u^\epsilon(x)=v^\epsilon(\frac{x}{\epsilon})$ satisfies \begin{equation}\label{intro_eq_res}\left\{\begin{array}{ll} \frac{1}{2}\tr(A(\frac{x}{\epsilon},\omega)D^2u^\epsilon)+\frac{1}{\epsilon}b(\frac{x}{\epsilon},\omega)\cdot Du^\epsilon=g(x) & \textrm{on}\;\; U, \\ u^\epsilon=f(x) & \textrm{on}\;\;\partial U, \end{array}\right.\end{equation} which in turn, following a change of variables in the final integral, admits the representation \begin{equation}\label{intro_rep_res} u^\epsilon(x)=v^\epsilon(\frac{x}{\epsilon})=E_{\frac{x}{\epsilon},\omega}(f(\epsilon X_{\frac{\epsilon^2 \tau^\epsilon}{\epsilon^2}})-\int_0^{\epsilon^2\tau^\epsilon}g(\epsilon X_{s/\epsilon^2})\;ds)\;\;\textrm{on}\;\;\overline{U},\end{equation}  where the stopping time $$\epsilon^2 \tau^\epsilon\;\;\textrm{quantifies the exit of the rescaled process}\;\;\epsilon X_{\frac{s}{\epsilon^2}}\;\;\textrm{from}\;\;U.$$

The limiting behavior of this rescaling was characterized in \cite{SZ}, where it was shown that, provided the perturbation $\eta$ in (\ref{intro_perturbation}) is sufficiently small, there exists a deterministic $\overline{\alpha}>0$ for which, on a subset of full probability, as $\epsilon\rightarrow 0$, \begin{equation}\label{intro_sz}\epsilon X_{\frac{s}{\epsilon^2}}\;\textrm{converges in law on}\;\mathbb{R}^d\;\textrm{to a Brownian motion with variance}\;\;\overline{\alpha}.\end{equation}  The goal here is to obtain the analogous result for the lifetime of such processes in large domains, and the result is stated in terms of the stochastic homogenization of (\ref{intro_eq_res}) for continuous data on the boundary and interior.

\begin{thm}\label{intro_main}  There exists a subset of full probability on which, for every bounded domain $U\subset\mathbb{R}^d$ satisfying an exterior ball condition, the solutions of (\ref{intro_eq}) converge uniformly on $\overline{U}$, as $\epsilon\rightarrow 0$, to the solution \begin{equation}\label{intro_main_1}\left\{\begin{array}{ll} \frac{\overline{\alpha}}{2}\Delta \overline{u}=g(x) & \textrm{on}\;\; U, \\ \overline{u}=f(x) & \textrm{on}\;\;\partial U.\end{array}\right.\end{equation}\end{thm}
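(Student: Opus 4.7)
The proof proceeds through viscosity-solution methods together with the invariance principle~(\ref{intro_sz}). The plan is to obtain uniform convergence in two stages: interior homogenization via a Perron / half-relaxed-limits argument showing that any limit point of $u^\epsilon$ solves $\tfrac{\overline{\alpha}}{2}\Delta \overline{u}=g$; and attainment of the boundary data $f$ on $\partial U$ using the exterior ball condition to construct barriers that are uniform in $\epsilon$ and $\omega$.

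The principal obstacle, and the novelty flagged in the abstract, is the a priori bound
\[
 \sup_{\epsilon>0}\,\sup_{x\in\overline{U}}\,E_{x/\epsilon,\omega}\bigl(\epsilon^2\tau^\epsilon\bigr)\le C(U),
\]
on a subset of full $\Prob$-measure. Because the drift in the rescaled equation~(\ref{intro_eq_res}) is of order $1/\epsilon$, this is not implied by the perturbative assumption~(\ref{intro_perturbation}) alone; one must exploit the statistical isotropy~(\ref{intro_isotropy}) and the finite-range dependence~(\ref{intro_independent}) to show that, once coarse-grained on a well-chosen mesoscopic scale $\delta\gg\epsilon$, the $1/\epsilon$-drift averages to an $O(1)$ quantity. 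Concretely, I would cover $U$ by mesoscopic balls of radius $\delta$, use the quantitative control on exit distributions from such balls established in \cite{F3} together with a strong Markov / iteration argument to compare the exit of $\epsilon X_{s/\epsilon^2}$ from $U$ with that of a Brownian motion of variance $\overline{\alpha}$, and thereby transfer the bounded-exit-time property of the limiting Brownian motion to the prelimit process. This step is the technical heart of the argument and is where the dimension hypothesis $\d\ge 3$ is critical, since the associated Green's-function decay is what permits the perturbative resummation in \cite{SZ}.

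Granted the exit-time bound, uniform boundedness $\abs{u^\epsilon(x)}\le\norm{f}_\infty+C(U)\norm{g}_\infty$ is immediate from the representation~(\ref{intro_rep_res}). Define the half-relaxed limits $u^*(x)=\limsup_{\epsilon\to 0,\,y\to x} u^\epsilon(y)$ and $u_*(x)=\liminf_{\epsilon\to 0,\,y\to x} u^\epsilon(y)$ on $\overline{U}$. To verify that $u^*$ is a viscosity subsolution and $u_*$ a supersolution of $\tfrac{\overline{\alpha}}{2}\Delta \overline{u}=g$ in $U$, I would adapt Evans's perturbed-test-function scheme: if a smooth $\phi$ touches $u^*$ strictly from above at $x_0\in U$, compare $u^\epsilon$ on a small ball $B_r(x_0)$ with the classical Brownian-motion solution of $\tfrac{\overline{\alpha}}{2}\Delta w=g$ in $B_r(x_0)$ with $w=\phi$ on $\partial B_r(x_0)$, using the quantitative exit-distribution convergence of \cite{F3} and the exit-time bound above to control the two terms of~(\ref{intro_rep_res}) separately, and let $r\to 0$ after $\epsilon\to 0$ to obtain $\tfrac{\overline{\alpha}}{2}\Delta\phi(x_0)\ge g(x_0)$.

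Finally, I would obtain the boundary inequalities $u^*\le f$ and $u_*\ge f$ on $\partial U$ by constructing, for each $x_0\in\partial U$, local barriers based on the exterior ball at $x_0$: a translate of an explicit radial supersolution, together with the perturbative bound~(\ref{intro_perturbation}), yields super- and subsolutions of~(\ref{intro_eq_res}) matching $f$ at $x_0$ and independent of $\omega,\epsilon$, which forces the exit from $U/\epsilon$ to occur near $\epsilon^{-1}x_0$ with overwhelming probability when the process starts nearby. The classical comparison principle for the Laplacian then gives $u^*\le\overline{u}\le u_*$ on $\overline{U}$, and since $u_*\le u^*$ by construction, the three agree and the convergence is uniform.
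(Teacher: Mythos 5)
Your proposal diverges from the paper's argument, and I believe it contains two genuine gaps.

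The more serious gap is the boundary-attainment step. You propose to construct, from the exterior ball condition and the smallness assumption~(\ref{intro_perturbation}), explicit radial super- and subsolutions of the rescaled equation~(\ref{intro_eq_res}) that are ``independent of $\omega,\epsilon$.'' This cannot work. A classical supersolution $\psi$ built from an exterior ball has $D\psi$ and $D^2\psi$ of order $O(1)$ near the touching point, so the drift term in~(\ref{intro_eq_res}) contributes $\frac{1}{\epsilon}b(x/\epsilon,\omega)\cdot D\psi=O(\eta/\epsilon)$, which diverges as $\epsilon\to 0$ and cannot be absorbed by the $O(1)$ second-order term. Indeed, the paper flags precisely this obstruction in the introduction: ``the drift can repel the process from the boundary, and thereby make impossible the existence of barriers which are effective except at scales much smaller than $\epsilon$.'' The paper instead obtains a \emph{proxy} for a barrier probabilistically, through Propositions~\ref{main_Brownian} and~\ref{main_time}: it couples the discretely-sampled diffusion with a Brownian motion of variance $\alpha_{n-\overline m}$ (Corollary~\ref{couple_cor}) and shows that, with high probability, once the discrete process is within $\tilde D_{n-\overline m}$ of $\partial(U/\epsilon)$, the true exit occurs within a further time $L_{n-1}^2$. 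No deterministic PDE barrier is constructed for~(\ref{intro_eq_res}).

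The second gap is the a priori bound $\sup_\epsilon\sup_x E_{x/\epsilon,\omega}(\epsilon^2\tau^\epsilon)\le C(U)$, which you propose to prove first, as a standalone lemma, by covering $U$ with mesoscopic balls and using the exit-\emph{distribution} control of \cite{F3} and the strong Markov property. Exit distributions alone do not control exit \emph{times}: the process can exit a mesoscopic ball with the correct law on the boundary while taking an a priori unbounded expected time to do so. The paper in fact points out that the intermediate tail estimate of Proposition~\ref{exit_main}, namely $\sup_{x}P_{x/\epsilon,\omega}(\tau^\epsilon>kL_{n+2}^2)\le C(\epsilon L_{n+2})^{-3k}$, ``remains far from implying a uniform in $\epsilon$ control for the expectation of the rescaled exit times.'' The uniform boundedness of $E_{x/\epsilon,\omega}(\epsilon^2\tau^\epsilon)$ is not established as a preliminary lemma; it is extracted only at the end, as a byproduct of the full homogenization Theorem~\ref{main_main} applied with $g\equiv -1$ (see the remark preceding Theorem~\ref{main_arbitrary}). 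The machinery that makes this possible is the simultaneous-time coupling of Proposition~\ref{couple_main}, the Brownian near-boundary exit-time estimates of Section~\ref{Brownian_exit} based on the explicit annulus formula, and the discrete-in-time approximation of the representation formula~(\ref{intro_rep}); exit-distribution convergence is not the mechanism. Your perturbed-test-function framing for the interior could in principle be grafted onto the paper's probabilistic estimates, but as written it rests on two inputs (the a priori exit-time bound and the $\epsilon$-uniform barriers) that are not available.
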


Furthermore, the convergence is shown to occur with an algebraic rate.  The rate is first established for boundary data which is the restriction of a bounded, uniformly continuous function and interior data which is the restriction of a bounded, Lipschitz function.  \begin{equation}\label{intro_rate_assumption}\textrm{Assume}\;\;f\in\BUC(\mathbb{R}^d)\;\;\textrm{and}\;\;g\in \Lip(\mathbb{R}^d).\end{equation}  Writing $\sigma_f$ and $Dg$ for the respective moduli of continuity, as defined, for each $x,y\in\mathbb{R}^d$, by $$\abs{f(x)-f(y)}\leq \sigma_f(\abs{x-y})\;\;\textrm{and}\;\;\abs{g(x)-g(y)}\leq\norm{Dg}_{L^\infty(\mathbb{R}^d)}\abs{x-y},$$ the result is the following.

\begin{thm}\label{intro_rate}  Assume (\ref{intro_rate_assumption}).  There exists a subset of full probability and $c_1, c_2, c_3, c_4>0$ such that, for all $\epsilon>0$ sufficiently small depending upon $\omega$, the respective solutions $u^\epsilon$ and $\overline{u}$ of (\ref{intro_eq_res}) and (\ref{intro_main_1}) satisfy, for $C>0$ independent of $\omega$ and $\epsilon$, $$\norm{u^\epsilon-\overline{u}}_{L^\infty(\overline{U})}\leq C(\norm{f}_{L^\infty(\mathbb{R}^d)}\epsilon^{c_1}+\sigma_f(\epsilon^{c_2})+\norm{g}_{L^\infty(\mathbb{R}^d)} \epsilon^{c_3}+\norm{Dg}_{L^\infty(\mathbb{R}^d)}\epsilon^{c_4}).$$\end{thm}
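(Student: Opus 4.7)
The plan is to compare the stochastic representation (\ref{intro_rep_res}) for $u^\epsilon$ with the analogous probabilistic representation
\[
\overline{u}(x) = E_x\bigl[f(B_{\overline{\tau}}) - \int_0^{\overline{\tau}} g(B_s)\, ds\bigr]
\]
for $\overline{u}$, where $B$ denotes a Brownian motion with variance $\overline{\alpha}$ and $\overline{\tau}$ its exit time from $U$. Subtracting, the pointwise error decomposes into a boundary piece
\[
I_1(x) = E_{x/\epsilon,\omega}\bigl[f(\epsilon X_{\tau^\epsilon})\bigr] - E_x\bigl[f(B_{\overline{\tau}})\bigr]
\]
and an interior piece
\[
I_2(x) = E_{x/\epsilon,\omega}\Bigl[\int_0^{\epsilon^2 \tau^\epsilon} g(\epsilon X_{s/\epsilon^2})\,ds\Bigr] - E_x\Bigl[\int_0^{\overline{\tau}} g(B_s)\,ds\Bigr],
\]
and I would bound each piece separately with an algebraic rate in $\epsilon$.

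For $I_1$ I would first mollify $f$ at a scale $\delta$ to obtain $f_\delta$ with $\|f-f_\delta\|_{L^\infty}\le\sigma_f(\delta)$ and $\|Df_\delta\|_{L^\infty}\le C\|f\|_{L^\infty}/\delta$. Replacing $f$ by $f_\delta$ in $I_1$ produces an error of order $\sigma_f(\delta)$. The remaining quantity is the difference of expectations of the Lipschitz test function $f_\delta$ under the exit distributions of $\epsilon X_{\cdot/\epsilon^2}$ and $B$, which is precisely the object controlled in \cite{F3}: for a Lipschitz test function $\varphi$ that work yields a bound of the form $C\|D\varphi\|_{L^\infty}\epsilon^a$ for some $a>0$, and with $\varphi=f_\delta$ this becomes $C\|f\|_{L^\infty}\epsilon^a/\delta$. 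Choosing $\delta$ as an appropriate power of $\epsilon$ then balances the two errors and delivers the $\|f\|_{L^\infty}\epsilon^{c_1}+\sigma_f(\epsilon^{c_2})$ part of the estimate.

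For $I_2$ I would next approximate $g$ by a piecewise constant function on a grid of spatial scale $\rho$. Since $g$ is Lipschitz, this approximation contributes an error controlled by $\|Dg\|_{L^\infty}\rho\cdot E_{x/\epsilon,\omega}[\epsilon^2\tau^\epsilon]$, which requires a uniform-in-$\epsilon$ bound on the rescaled exit-time moment. On each cell of the grid the piecewise-constant contribution reduces to $\|g\|_{L^\infty}$ times the difference of expected exit times for the two diffusions from suitable subdomains of $U$ --- that is, to a quantitative version of exit-time homogenization, which is the main technical content of this paper developed in the sections preceding the present theorem. Optimizing $\rho$ as a power of $\epsilon$ then produces the $\|g\|_{L^\infty}\epsilon^{c_3}+\|Dg\|_{L^\infty}\epsilon^{c_4}$ contribution.

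The main obstacle is the interior piece $I_2$, as the abstract foreshadows. Unlike in \cite{F3}, where only exit positions were needed, here one must obtain both a uniform-in-$\epsilon$ bound on $E_{x/\epsilon,\omega}[\epsilon^2\tau^\epsilon]$ and an algebraic rate of convergence of this quantity to $E_x[\overline{\tau}]$. The uniform bound itself is nontrivial because the rescaled operator in (\ref{intro_eq_res}) carries the singular drift $\epsilon^{-1}b(\cdot/\epsilon)$, whose effective behavior in the limit could in principle trap or drive out the process; the quantitative rate in turn must exploit the smallness of $\eta$ in (\ref{intro_perturbation}) together with the finite-range dependence (\ref{intro_independent}) and the isotropy (\ref{intro_isotropy}) to produce a concentration-type comparison of the microscopic occupation density / Green's function with that of a Brownian motion with variance $\overline{\alpha}$. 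Once this exit-time homogenization is in hand with an algebraic rate, the decomposition above assembles the four-term bound.
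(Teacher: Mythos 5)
Your overall decomposition — split by linearity into a boundary term $I_1$ (the $g=0$ case, which is \cite{F3}) and an interior term $I_2$ (the $f=0$ case) — is exactly the structure the paper uses (Theorems \ref{rate_old}, \ref{rate_new}, \ref{rate_main}). Your handling of $I_1$ by mollifying $f$ at scale $\delta$ and invoking the exit-distribution rate of \cite{F3} is a reasonable reconstruction of what that citation provides. The problem is $I_2$, and here there is a genuine gap.

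Your plan to approximate $g$ by a piecewise-constant $g_\rho$ on a spatial grid of scale $\rho$ does not lead where you claim it does. First, by linearity $u^{\epsilon,\rho} = \sum_k c_k w_k^\epsilon$ where $w_k^\epsilon$ has an indicator-function right-hand side; the corresponding quantity $E_{x/\epsilon,\omega}\int_0^{\tau^\epsilon} 1_{Q_k}(\epsilon X_s)\,ds$ is an \emph{occupation time} of the cell $Q_k$, not an exit time from a subdomain, so you have not reduced to ``exit-time homogenization'' in the sense you describe. Second, and more seriously, $g_\rho$ is discontinuous, so you have replaced the task of homogenizing with Lipschitz $g$ by the harder task of homogenizing with discontinuous right-hand side; and if you instead bound each indicator-function term separately, the number of cells $\sim\rho^{-d}$ enters and must be beaten by whatever rate you have, which pushes the issue around without resolving it. Third, and most substantively, you state that a uniform-in-$\epsilon$ bound on $E_{x/\epsilon,\omega}[\epsilon^2\tau^\epsilon]$ and an algebraic rate for exit-time homogenization are the main obstacles and must be ``in hand,'' but these are precisely the conclusion of the paper's Theorem \ref{main_main} — they are not available as inputs, and producing them is the entire technical content of Sections \ref{section_coupling}--\ref{section_main}.

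The paper's route for the interior piece is different in mechanism and should be named: starting from the tail estimate of Proposition \ref{exit_main} it truncates the integral to $\{\tau^\epsilon\le L_{n+2}^2\}$, replaces $\tau^\epsilon$ by a discretely stopped proxy $\tau_1^{\epsilon,n}$ (controlled via the coupling of Corollary \ref{couple_cor} and the Brownian boundary estimates of Section \ref{Brownian_exit}, which play the role of a barrier), discretizes the time integral at step $L_{n-\overline{m}}^2$, transfers to the coupled Brownian process, and unwinds. The Lipschitz bound $\|Dg\|_{L^\infty}$ enters there to control $|g(\epsilon X_s)-g(\epsilon X_{kL_{n-\overline{m}}^2})|$ and $|g(\epsilon X_k)-g(\epsilon\overline{X}_k)|$ under localization and coupling — not to justify a piecewise-constant surrogate for $g$. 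The exit-time moment bound is obtained as a byproduct (taking $g\equiv -1$), not assumed. Your sketch neither supplies this machinery nor replaces it with a working alternative, so the $\|g\|_{L^\infty}\epsilon^{c_3}+\|Dg\|_{L^\infty}\epsilon^{c_4}$ half of the estimate is not actually established.
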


Condition (\ref{intro_rate_assumption}) can be relaxed in the case that the domain is smooth via a standard extension argument or, in the case that $U=B_1$ is the ball, by an explicit radial construction. \begin{equation} \label{intro_rate_u}\textrm{Assume}\;f\in\C(\partial U), g\in\Lip(\overline{U})\;\textrm{and that the domain}\;U\;\textrm{is smooth}.\end{equation}  Then, the rate obtained in Theorem \ref{intro_rate} is preserved up to a domain dependent factor.

\begin{thm}\label{intro_rate_smooth}  Assume (\ref{intro_rate_u}).  There exists a subset of full probability, $c_1, c_2, c_3, c_4>0$ and $C_1=C_1(U)>0$ such that, for all $\epsilon>0$ sufficiently small depending upon $\omega$, the respective solutions $u^\epsilon$ and $\overline{u}$ of (\ref{intro_eq_res}) and (\ref{intro_main_1}) satisfy, for $C>0$ independent of $\omega$ and $\epsilon$, $$\norm{u^\epsilon-\overline{u}}_{L^\infty(\overline{U})}\leq C(\norm{f}_{L^\infty(\partial U)}\epsilon^{c_1}+\sigma_f(C_1\epsilon^{c_2})+\norm{g}_{L^\infty(\overline{U})} \epsilon^{c_3}+C_1\norm{Dg}_{L^\infty(\overline{U})}\epsilon^{c_4}).$$\end{thm}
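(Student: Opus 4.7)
The plan is to reduce Theorem \ref{intro_rate_smooth} to Theorem \ref{intro_rate} by extending the boundary data $f \in \C(\partial U)$ and the interior data $g \in \Lip(\overline{U})$ to $\tilde{f} \in \BUC(\mathbb{R}^d)$ and $\tilde{g} \in \Lip(\mathbb{R}^d)$, applying Theorem \ref{intro_rate} to the extended problem, and invoking uniqueness for the Dirichlet problems (\ref{intro_eq_res}) and (\ref{intro_main_1}) to identify the extended solutions with $u^\epsilon$ and $\overline{u}$ on $\overline{U}$. The domain-dependent constant $C_1 = C_1(U)$ will then arise purely from controlling the moduli of continuity and Lipschitz constants of these extensions in terms of those of the original data.

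For the interior data I would use a McShane-type extension, setting
\[
\tilde{g}(x) = \inf_{y \in \overline{U}}\bigl(g(y) + \norm{Dg}_{L^\infty(\overline{U})}\abs{x-y}\bigr),
\]
and then truncating at $\pm\norm{g}_{L^\infty(\overline{U})}$, which yields $\tilde{g} \in \Lip(\mathbb{R}^d)$ with $\norm{\tilde{g}}_{L^\infty(\mathbb{R}^d)} \leq \norm{g}_{L^\infty(\overline{U})}$ and $\norm{D\tilde{g}}_{L^\infty(\mathbb{R}^d)} \leq \norm{Dg}_{L^\infty(\overline{U})}$. For the boundary data I would exploit the smoothness of $U$ to obtain a tubular neighborhood $N = \{x \in \mathbb{R}^d : \dist(x,\partial U) < \delta_0\}$, with $\delta_0 = \delta_0(U) > 0$, on which the nearest-point projection $\pi : N \to \partial U$ is smooth with Lipschitz constant at most $C_1 = C_1(U)$. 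Setting $\tilde{f}(x) = \chi(\dist(x,\partial U))f(\pi(x))$ for $x \in N$ and $\tilde{f}(x) = 0$ otherwise, where $\chi$ is a fixed smooth cutoff with $\chi \equiv 1$ near $0$ and $\supt \chi \subset [0,\delta_0/2)$, produces $\tilde{f} \in \BUC(\mathbb{R}^d)$ with $\tilde{f}|_{\partial U} = f$, $\norm{\tilde{f}}_{L^\infty(\mathbb{R}^d)} \leq \norm{f}_{L^\infty(\partial U)}$, and modulus of continuity controlled by
\[
\sigma_{\tilde{f}}(r) \leq \sigma_f(C_1 r) + C_1 \norm{f}_{L^\infty(\partial U)} r,
\]
the second term arising from the derivative of $\chi$.

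Since $\tilde{g} = g$ on $\overline{U}$ and $\tilde{f} = f$ on $\partial U$, the comparison principle for the Dirichlet problems (\ref{intro_eq_res}) and (\ref{intro_main_1}) forces the solutions $\tilde{u}^\epsilon$ and $\overline{\tilde{u}}$ associated to $(\tilde{f},\tilde{g})$ to coincide on $\overline{U}$ with $u^\epsilon$ and $\overline{u}$. Theorem \ref{intro_rate} applied to $(\tilde{f},\tilde{g})$ then delivers the stated bound, once the auxiliary $C_1\norm{f}_{L^\infty(\partial U)}\epsilon^{c_2}$ contribution from the linear-in-$r$ part of $\sigma_{\tilde{f}}$ is absorbed into the $\norm{f}_{L^\infty(\partial U)}\epsilon^{c_1}$ term, adjusting $c_1$ downward to $\min(c_1,c_2)$ if necessary and letting the geometric factor $C_1$ pass into the overall constant $C$. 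There is no genuinely new analytic content beyond Theorem \ref{intro_rate}; the main subtlety, which I expect to be the only real obstacle, is purely geometric bookkeeping---verifying that $\delta_0$ and $\Lip(\pi)$ depend only on the smooth structure of $U$, and that the claimed modulus bound for $\tilde{f}$ follows rigorously from the Lipschitz property of $\pi$ together with the smoothness of $\chi$ in the tubular neighborhood, with the interface between $N$ and its complement handled cleanly by the cutoff.
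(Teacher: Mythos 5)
Your proposal is correct and takes essentially the same route as the paper: extend $f$ and $g$ to $\mathbb{R}^d$ via a tubular neighborhood of $\partial U$ (the paper cites Milnor's Product Neighborhood Theorem, your nearest-point projection $\pi$ is the same device) and a Lipschitz (McShane-type) extension for $g$, identify the solutions of (\ref{intro_eq_res})--(\ref{intro_main_1}) for the extended data with $u^\epsilon$ and $\overline{u}$ on $\overline{U}$, and then invoke Theorem \ref{intro_rate}. If anything, your treatment is slightly more careful than the paper's, which asserts $\sigma_{\tilde{f}}(s)\leq\sigma_f(Cs)$ without recording the additional $C_1\norm{f}_{L^\infty(\partial U)}r$ contribution from the cutoff near the outer edge of the tubular neighborhood; you correctly note that term and absorb it into the $\norm{f}_{L^\infty(\partial U)}\epsilon^{c_1}$ piece after shrinking the exponent.
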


The methods of this paper also apply to the analogous parabolic equation \begin{equation}\label{intro_parabolic}\left\{\begin{array}{ll} u_t^\epsilon=\frac{1}{2}\tr(A(\frac{x}{\epsilon},\omega)D^2u^\epsilon)+\frac{1}{\epsilon}b(\frac{x}{\epsilon},\omega)\cdot Du^\epsilon+g(x) & \textrm{on}\;\;U\times(0,\infty), \\ u^\epsilon=f(x) & \textrm{on}\;\;\overline{U}\times\left\{0\right\}\cup\partial U\times(0,\infty),\end{array}\right.\end{equation} whose solutions admit the representation $$u^\epsilon(x,t)=E_{\frac{x}{\epsilon},\omega}(f(\epsilon X_{(\epsilon^2\tau^\epsilon\wedge t)/\epsilon^2})+\int_0^{(\epsilon^2\tau^\epsilon\wedge t)}g(\epsilon X_\frac{s}{\epsilon^2})\;ds)\;\;\textrm{on}\;\;\overline{U}\times[0,\infty).$$  In this case, on a subset of full probability, the solutions of (\ref{intro_parabolic}) are shown to convergence, as $\epsilon\rightarrow 0$ and uniformly on $\overline{U}\times[0,\infty)$, to the solution \begin{equation}\label{intro_parabolic_hom}\left\{\begin{array}{ll} \overline{u}_t=\frac{\overline{\alpha}}{2}\Delta \overline{u}+g(x) & \textrm{on}\;\;U\times(0,\infty), \\ \overline{u}=f(x) & \textrm{on}\;\;\overline{U}\times\left\{0\right\}\cup\partial U\times(0,\infty).\end{array}\right.\end{equation}  Since the proof follows by combining the techniques used in this paper and the author's work \cite{F3}, the details are omitted.

\begin{thm}\label{intro_main_parabolic}  There exists a subset of full probability on which, for every bounded domain $U\subset\mathbb{R}^d$ satisfying an exterior ball condition, the respective solutions $u^\epsilon$ and $\overline{u}$ of (\ref{intro_parabolic}) and (\ref{intro_parabolic_hom}) satisfy $$\lim_{\epsilon\rightarrow 0}\norm{u^\epsilon-\overline{u}}_{L^\infty(\overline{U}\times[0,\infty))}=0.$$  Furthermore, the convergence occurs with an algebraic rate in exact analogy with Theorems \ref{intro_rate} and \ref{intro_rate_smooth}.\end{thm}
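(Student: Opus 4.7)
The plan is to combine the elliptic homogenization results established in this paper (Theorems \ref{intro_main}--\ref{intro_rate_smooth}) with the parabolic perturbed test function methodology from the author's earlier work \cite{F3}. The key observation is that the parabolic stopping $\epsilon^2\tau^\epsilon\wedge t$ is automatically bounded on compact time intervals, bypassing the unbounded-exit-time difficulty on $[0,T]$; the large-time regime is then reduced, via exponential equilibration, to the stationary problems treated in Theorems \ref{intro_main}--\ref{intro_rate_smooth}.

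First I would establish an algebraic rate on a compact time interval $\overline{U}\times[0,T]$. On such a window $\epsilon^2\tau^\epsilon\wedge t\le T$, so the integral in the representation is bounded by $T\norm{g}_{L^\infty(\overline{U})}$, and the apparatus developed for Theorem \ref{intro_rate}---approximate correctors absorbing the singular drift $\frac{1}{\epsilon}b\cdot Du^\epsilon$, together with the concentration estimates that upgrade the qualitative limit (\ref{intro_sz}) to a quantitative one---applies with the modified stopping rule $\tau^\epsilon\wedge T/\epsilon^2$. Combined with the martingale and convergence arguments of \cite{F3}, this yields $\norm{u^\epsilon-\overline{u}}_{L^\infty(\overline{U}\times[0,T])}\le C(T)\epsilon^c$, with $C(T)$ polynomial in $T$.

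Second, I would extend the estimate to $\overline{U}\times[0,\infty)$ via exponential equilibration. Let $\widetilde{v}^\epsilon$ and $\widetilde{v}$ denote the stationary solutions of (\ref{intro_parabolic}) and (\ref{intro_parabolic_hom}), which solve the elliptic problems (\ref{intro_eq_res}) and (\ref{intro_main_1}) with $-g$ in place of $g$ and the same boundary data $f$. The homogenized $\overline{u}(\cdot,t)$ converges to $\widetilde{v}$ exponentially with rate $\overline{\lambda}>0$ determined by the first Dirichlet eigenvalue of $-\frac{\overline{\alpha}}{2}\Delta$ on $U$. For $u^\epsilon$, the probabilistic representation gives the direct bound
\begin{equation*}
|u^\epsilon(y,t)-\widetilde{v}^\epsilon(y)|\le 2\norm{f}_{L^\infty(\partial U)}\mathbb{P}_{y/\epsilon,\omega}(\epsilon^2\tau^\epsilon>t)+\norm{g}_{L^\infty(\overline{U})}E_{y/\epsilon,\omega}[(\epsilon^2\tau^\epsilon-t)_+],
\end{equation*}
valid for every $y\in\overline{U}$. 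The exit-time estimates proved in this paper control the right-hand side, on a set of full probability and uniformly in $\epsilon$, by a function decaying in $t$. Choosing $T=c_0\abs{\log\epsilon}$ with $c_0$ large, applying Theorem \ref{intro_rate_smooth} (by linearity) to $\norm{\widetilde{v}^\epsilon-\widetilde{v}}_{L^\infty(\overline{U})}$, and combining via the triangle inequality glues the two regimes into the required uniform algebraic rate on $\overline{U}\times[0,\infty)$.

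The principal obstacle is the large-time regime: the singular drift $\frac{1}{\epsilon}b$ means that a naive Pucci comparison produces an equilibration rate that degrades as $\epsilon\to 0$, and the expected exit time $E[\epsilon^2\tau^\epsilon]$ is itself generically unbounded, as emphasized in the abstract. The remedy is precisely the content of this paper: the quantitative upper-tail control on $\epsilon^2\tau^\epsilon$ that makes the displayed inequality above decay uniformly in $\epsilon$ on a subset of full probability. Once that ingredient is in hand, the remaining steps---corrector construction, perturbed test function arguments, and uniqueness of the limit---are direct adaptations of \cite{F3}, which is why the details can reasonably be omitted.
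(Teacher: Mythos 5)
The paper offers no proof of this theorem; it simply remarks that the proof follows by combining the techniques of the present paper with those of \cite{F3}, both of which are probabilistic in nature---discrete-time coupling with Brownian motion via the Kantorovich--Rubinstein theorem (Section \ref{section_coupling}), tail estimates for exit times (Section \ref{section_exit_time}), boundary barrier estimates for Brownian motion (Section \ref{Brownian_exit}), and a discrete approximation of the representation formula (Section \ref{section_main}). Your high-level structure---control on a compact window $[0,T]$, exponential equilibration to the stationary problem for large $t$, and gluing at $T\sim|\log\epsilon|$---is a sound organizational skeleton, and your pointwise bound comparing $u^\epsilon(y,t)$ with the stationary solution $\widetilde v^\epsilon$ is correct and does decay once a uniform (in $\epsilon$) bound on $\sup_{x}E_{x/\epsilon,\omega}(\epsilon^2\tau^\epsilon)$ is available via Theorem \ref{intro_main} and the strong Markov property, as you note.

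However, the machinery you invoke for the compact-time regime---``approximate correctors absorbing the singular drift'' and ``perturbed test function methodology from \cite{F3}''---is not what the paper or \cite{F3} uses, and in fact is not available in this setting. In the general isotropic perturbative regime (non-divergence form, generic bounded Lipschitz drift with no potential or divergence-free structure), correctors do not exist in any usable class; this is precisely why the Sznitman--Zeitouni multiscale renormalization scheme, rather than a corrector-based homogenization argument, underlies all of these results. The reference \cite{F3} is a probabilistic coupling paper in the same vein as Sections \ref{section_coupling}--\ref{section_main} here (compare the citations \cite[Proposition~5.1]{F3}, \cite[Proposition~7.3]{F3}, \cite[Theorem~7.5]{F3}), not a perturbed test function argument. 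To fill in your first regime correctly, one should instead apply the discrete stopping time $\tau^{\epsilon,n}_1\wedge (t/\epsilon^2)$ to the parabolic representation, couple the discretely sampled process to a Brownian motion of variance $\alpha_{n-\overline m}$ via Corollary \ref{couple_cor}, and then pass to the homogenized parabolic solution using the annulus barriers of Section \ref{Brownian_exit}; the deterministic cap $\wedge\, t/\epsilon^2$ in fact makes the parabolic case uniform in $t$ directly, so the paper likely intends a single pass through the discrete scheme rather than your two-regime gluing. Your outline could be repaired by replacing the corrector/perturbed-test-function step with this coupling argument, but as written the key step would not go through.
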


The essential novelty of this paper is to handle the case $g\neq 0$, since when $g=0$ the results of \cite{F3} proved, on a subset of full probability, as $\epsilon\rightarrow 0$, the solutions of (\ref{intro_eq_res}) converge uniformly on $\overline{U}$ to the solution $$\left\{\begin{array}{ll} \Delta\overline{u}=0 & \textrm{on}\;\; U, \\ \overline{u}=f & \textrm{on}\;\;\partial U.\end{array}\right.$$  The simplification is that, when dealing with merely the exit distribution, events of vanishing probability necessarily pose a vanishing threat.  Or, in terms of the analysis, solutions of (\ref{intro_eq_res}) are uniformly bounded in $\epsilon>0$, and satisfy the estimate $$\norm{u^\epsilon}_{L^\infty(\overline{U})}\leq \norm{f}_{L^\infty(\partial U)}\;\;\textrm{whenever}\;\;g=0.$$

In the case $g\neq 0$, it is not a priori obvious that even such $L^\infty$-estimates are obtainable, since the statistical isotropy (\ref{intro_isotropy}) imposes no symmetry, in general, on the quenched environments.  More precisely, in Section \ref{preliminaries} the diffusion beginning from $x$ in environment $\omega$ will be described in the space of continuous paths by a measure and expectation denoted respectively $$P_{x,\omega}\;\;\textrm{and}\;\;E_{x,\omega}.$$ It is manifestly not the case that these objects are, in any sense, translationally or rotationally invariant in space or that they are in any way symmetric.

The invariance implied by the stationarity (\ref{intro_stationary}) and isotropy (\ref{intro_isotropy}) is seen only after averaging with respect to the entire collection of environments.  That is, the annealed measures and expectations, which are defined as the semi-direct products $$\mathbb{P}_x=\mathbb{P}\ltimes P_{x,\omega}\;\;\textrm{and}\;\;\mathbb{E}_x=\mathbb{E}\ltimes E_{x,\omega},$$ do satisfy a translational and rotational invariance in the sense that, for all $x,y\in\mathbb{R}^d$, \begin{equation}\label{annealed} \mathbb{E}_{x+y}(X_t)=\mathbb{E}_y(x+X_t)=x+\mathbb{E}_y(X_t),\end{equation} and, for all orthogonal transformations $r$ preserving the coordinate axis, for every $x\in\mathbb{R}^d$, \begin{equation}\label{annealed1} \mathbb{E}_{x}(rX_t)=\mathbb{E}_{rx}(X_t).\end{equation}  While this fact plays an important role in \cite{SZ} to preclude, with probability one, the emergence of ballistic behavior of the rescaled process in the asymptotic limit, it does not yield an immediate control, with respect to the quenched expectations, for the exit time of the process from large domains.  And, therefore, does not readily imply that the solutions of (\ref{intro_eq_res}) are uniformly bounded as $\epsilon$ approaches zero.

The proof of Theorem \ref{intro_main} is founded strongly in the results of \cite{SZ}, which in particular establish, on scales of order $\frac{1}{\epsilon}$ in space and $\frac{1}{\epsilon^2}$ in time and with high probability, a comparison between solutions \begin{equation}\label{intro_outline_1}\left\{\begin{array}{ll} v^\epsilon_t=\tr(A(x,\omega)D^2v^\epsilon)+b(x,\omega)\cdot Dv^\epsilon & \textrm{on}\;\;\mathbb{R}^d\times(0,\infty), \\ v^\epsilon=f(\epsilon x) & \textrm{on}\;\;\mathbb{R}^d\times\left\{0\right\},\end{array}\right.\end{equation} and the solution of the homogenized problem \begin{equation}\label{intro_outline_2}\left\{\begin{array}{ll} \overline{v}^\epsilon_t=\frac{\overline{\alpha}}{2}\Delta\overline{v}^\epsilon & \textrm{on}\;\;\mathbb{R}^d\times(0,\infty), \\ \overline{v}^\epsilon=f(\epsilon x) & \textrm{on}\;\;\mathbb{R}^d\times\left\{0\right\},\end{array}\right.\end{equation} with respect to rescaled H\"older-norms defined in (\ref{prob_Holder}).  This comparison is used in Section \ref{section_coupling}, similar to its use in \cite[Proposition~3.1]{SZ} and later in \cite[Proposition~5.1]{F3}, to establish a global coupling, on larges scales in space and time and with high probability, between the diffusion in random environment associated to the generator \begin{equation}\label{intro_outline_3}\frac{1}{2}\sum_{i,j=1}^da_{ij}(x,\omega)\frac{\partial^2}{\partial x_i \partial x_j}+\sum_{i=1}^d b_i(x,\omega)\frac{\partial}{\partial x_i}\end{equation} and a Brownian motion with variance approximately $\overline{\alpha}$.  See Proposition \ref{couple_main} and, in particular, Corollary \ref{couple_cor}.

This coupling will be achieved along a discrete sequence of time steps which, while small with respect to the scale $\frac{1}{\epsilon^2}$, are typically insufficient to characterize the asymptotic behavior of solutions of (\ref{intro_eq}) due to the emergence of the singular in $\frac{1}{\epsilon}$ drift.  The difficulties are twofold.

First, the drift can trap the particle in the domain to create, in expectation, an exponentially in $\frac{1}{\epsilon}$ increasing exit time.  To counteract this, the probability that the exit time is large is first controlled by Proposition \ref{exit_main} in Section \ref{section_exit_time}, where the comparison between solutions of (\ref{intro_outline_1}) and (\ref{intro_outline_2}) is again used to obtain a preliminary tail estimate.  Essentially, it is shown that there exists a small $a_1>0$ and a constant $a_2>0$ such that, with high probability, for $\tau^\epsilon$ the exit time from $U/\epsilon$ and for $C>0$ independent of $\epsilon$, \begin{equation}\label{intro_outline_4}\sup_{x\in \overline{U}/\epsilon}P_{x,\omega}(\tau^\epsilon>\frac{1}{\epsilon^{2+a_1}})\leq C\epsilon^{a_2}.\end{equation}  Note that although this estimate is an improvement upon the generic behavior of processes associated to generators like (\ref{intro_outline_3}), it remains far from implying a uniform in $\epsilon$ control for the expectation of the rescaled exit times $\epsilon^2 \tau^\epsilon$ associated to the rescaled process in the original domain.

Second, the drift can repel the process from the boundary, and thereby make impossible the existence of barriers which are effective except at scales much smaller than $\epsilon$.  To overcome this, a proxy for a barrier is essentially obtained through the arguments of Section \ref{section_main}, see Propositions \ref{main_Brownian} and \ref{main_time}, by combining the coupling established in Corollary \ref{couple_cor} with estimates for the exit time of Brownian motion from Section \ref{Brownian_exit}.  It is here that the exterior ball condition is used most essentially, and the results follow from standard comparison techniques and an explicit formula for the exit time of Brownian motion in annular domains.  See Propositions \ref{disc_annulus} and \ref{disc_u}.

The primary argument of the paper comes in Theorem \ref{main_main} of Section \ref{section_main}, and a precise outline is presented between lines (\ref{main_outline_1}) and (\ref{main_outline_14}).  The idea is to introduce a discretely stopped version of the process, and to consider the corresponding discrete version of the representation (\ref{intro_rep}).  The efficacy of this approximation follows from localization estimates obtained in \cite{SZ}, see Control \ref{localization}, and the substitute for boundary barriers implied by Propositions \ref{main_Brownian} and \ref{main_time}.  The discrete proxy is then compared with the analogous approximation defined by a Brownian motion of variance $\overline{\alpha}$ using the coupling from Corollay \ref{couple_cor}.  Finally, the results from Section \ref{Brownian_exit} together with standard exponential estimates for Brownian motion allow for the recovery of the homogenized solution (\ref{intro_main_1}) from its discrete representation and thereby complete the proof.  The rate is presented in Section \ref{section_rate}, and the proof is a straightforward consequence of the methods used to prove Theorem \ref{main_main}.

Diffusion processes in the stationary ergodic setting were first considered in the case $b(x,\omega)=0$ by Papanicolaou and Varadhan \cite{PV1}.  Furthermore, in the case that (\ref{intro_eq}) can be rewritten in divergence form or in the case that $b(x,\omega)$ is divergence free or the gradient of a stationary field, such processes and various boundary value problems have been studied by Papanicolaou and Varadhan \cite{PV}, De Masi, Ferrari, Goldstein and Wick \cite{MFGW}, Kozlov \cite{Kozlov}, Olla \cite{Olla} and Osada \cite{Osada}.  However, outside of this framework, much less is understood.

In the continuous setting, the results of \cite{SZ}, which apply to the isotropic, perturbative regime described above, are the only available.  And, these have been more recently extended by the author in \cite{F2, F1, F3}.  In particular, the results of \cite{F3} prove that the exit distributions of such processes from large domains converge to that of a Brownian motion, a result which is the continuous analogue of work in the discrete setting by Bolthausen and Zeitouni \cite{BZ}, who characterized the exit distributions from large balls (so, taking $U=B_1$) of random walks in random environment which are small, isotropic perturbations of a simple random walk.  Their work was later refined by Baur and Bolthausen \cite{BB} under a somewhat less stringent isotropy assumption.

The almost-sure characterization of the exit time and the general homogenization statement contained in Theorem \ref{intro_main} remain open in the discrete case.  However, under the assumptions of \cite{BB}, and by using an additional quenched symmetry assumption along a single coordinate direction, Baur \cite{Baur} has obtained a quenched invariance principle analogous to (\ref{intro_sz}) and a characterization of the exit times from large balls (so, taking $U=B_1$).  The symmetry with respect to the quenched measures $P_{x,\omega}$ allows for the exit of the one-dimensional projection $X_t\cdot e_1$ to be estimated by standard Martingale methods, and yields an effective a priori control of the rescaled exit times $\epsilon^2\tau^\epsilon$.   Therefore, when dealing with the continuous analogue of such environments, many of the arguments in this paper can be simplified.

It should be noted that the techniques presented here differ substantially from \cite{BB,Baur,BZ}, which employ renormalization schemes to propagate estimates controlling the convergence of the exit law of the diffusion in random environment to the uniform measure on the boundary of the ball.  The arguments of this paper begin instead from the parabolic results of \cite{SZ}, and apply immediately to general domains.

The organization of the paper is as follows.  Section \ref{preliminaries} contains the notation and assumptions.  Section \ref{inductive} reviews those aspects of \cite{SZ} most relevant to this work and presents the primary probabilistic statement concerning the random environment.  The global coupling is presented in Section \ref{section_coupling} and a tail estimate for the exit time associated to the process in random environment is obtained in Section \ref{section_exit_time}.  Section \ref{Brownian_exit} controls the expectation of the exit time of Brownian motion near the boundary.   The proof of homogenization is presented in Section \ref{section_main} and the rate of convergence is established in Section \ref{section_rate}.

\section{Preliminaries}\label{preliminaries}

\subsection{Notation}

The elements of $\mathbb{R}^d$ and $[0,\infty)$ are written $x$ or $y$ and $t$ respectively and $(x,y)$ denotes the standard inner product.  The spacial gradient and derivative in time of a scalar function $v$ are written $Dv$ and $v_t$, while $D^2v$ denotes the the Hessian matrix.  The spaces of $k\times l$ and $k\times k$ symmetric matrices with real entries are written $\mathcal{M}^{k\times l}$ and $\mathcal{S}(k)$ respectively.  If $M\in\mathcal{M}^{k\times l}$, then $M^t$ is its transpose and $\abs{M}$ is the norm defined by $\abs{M}=\tr(MM^t)^{1/2}.$  The trace of a square matrix $M$ is written $\tr(M)$.  The distance between subsets $A,B\subset\mathbb{R}^d$ is $$d(A,B)=\inf\left\{\;\abs{a-b}\;|\;a\in A, b\in B\;\right\}$$ and, for an index $\mathcal{A}$ and a family of measurable functions $\left\{f_\alpha:\mathbb{R}^d\times\Omega\rightarrow\mathbb{R}^{n_\alpha}\right\}_{\alpha\in\mathcal{A}}$, the sigma algebra generated by the random variables $f_\alpha(x,\omega)$, for $x\in A$ and $\alpha\in\mathcal{A}$, is denoted $$\sigma(f_\alpha(x,\omega)\;|\;x\in A, \alpha\in\mathcal{A}).$$  For domains $U\subset\mathbb{R}^d$, $\BUC(U;\mathbb{R}^d)$, $\C(U;\mathbb{R}^d)$, $\Lip(U;\mathbb{R}^d)$, $\C^{0,\beta}(U;\mathbb{R}^d)$ and $\C^k(U;\mathbb{R}^d)$ are the spaces of bounded continuous, continuous, Lipschitz continuous, $\beta$-H\"{o}lder continuous and $k$-continuously differentiable functions on $U$ with values in $\mathbb{R}^d$.  Furthermore, $C^\infty_c(\mathbb{R}^d)$ denotes the space of smooth, compactly supported functions on $\mathbb{R}^d$.  The closure and boundary of $U\subset\mathbb{R}^d$ are denoted $\overline{U}$ and $\partial U$.  The support of a function $f:\mathbb{R}^d\rightarrow\mathbb{R}$ is written $\Supp(f)$.  The open balls of radius $R$ centered at zero and $x\in\mathbb{R}^d$ are respectively written $B_R$ and $B_R(x)$.  For a real number $r\in\mathbb{R}$, the notation $\left[r\right]$ denotes the largest integer less than or equal to $r$.  Finally, throughout the paper $C$ represents a constant which may change within a line and from line to line but is independent of $\omega\in\Omega$ unless otherwise indicated.

\subsection{The Random Environment}

A probability space $(\Omega,\mathcal{F},\mathbb{P})$ indexes the random environment, and the elements $\omega\in\Omega$ correspond to realizations described by the coefficients $A(\cdot,\omega)$ and $b(\cdot,\omega)$ on $\mathbb{R}^d$.  Their stationarity is quantified by an \begin{equation}\label{transgroup} \textrm{ergodic group of measure-preserving transformations}\; \left\{\tau_x:\Omega\rightarrow\Omega\right\}_{x\in\mathbb{R}^d}\end{equation} such that $A:\mathbb{R}^d\times\Omega\rightarrow\mathcal{S}(d)$ and $b:\mathbb{R}^d\times\Omega\rightarrow\mathbb{R}^d$ are bi-measurable stationary functions satisfying, for each $x,y\in\mathbb{R}^d$ and $\omega\in\Omega$, \begin{equation}\label{stationary} A(x+y,\omega)=A(x,\tau_y\omega)\;\;\textrm{and}\;\;b(x+y,\omega)=b(x,\tau_y\omega).\end{equation}

The diffusion matrix and drift are bounded, Lipschitz functions on $\mathbb{R}^d$ for each $\omega\in\Omega$.  There exists $C>0$ such that, for all $x\in\mathbb{R}^d$ and $\omega\in\Omega$,  \begin{equation}\label{bounded} \abs{b(x,\omega)}\leq C\;\;\;\textrm{and}\;\;\;\abs{A(x,\omega)}\leq C, \end{equation} and, for all $x,y\in\mathbb{R}^d$ and $\omega\in\Omega$, \begin{equation}\label{Lipschitz} \abs{b(x,\omega)-b(y,\omega)}\leq C\abs{x-y}\;\;\;\textrm{and}\;\;\;\abs{A(x,\omega)-A(y,\omega)}\leq C\abs{x-y}.\end{equation}  In addition, the diffusion matrix is uniformly elliptic.  There exists $\nu>1$ such that, for all $x\in\mathbb{R}^d$ and $\omega\in\Omega$, \begin{equation}\label{elliptic} \frac{1}{\nu} I\leq A(x,\omega)\leq \nu I.\end{equation}

The environment is strongly mixing in the sense that the coefficients satisfy a finite range dependence.  There exists $R>0$ such that, for every $A,B\subset\mathbb{R}^d$ satisfying $d(A,B)\geq R$, the sigma algebras \begin{equation}\label{finitedep} \sigma(A(x,\cdot), b(x,\cdot)\;|\;x\in A)\;\;\textrm{and}\;\;\sigma(A(x,\cdot), b(x,\cdot)\;|\;x\in B)\;\;\textrm{are independent.}\end{equation}  The environment is statistically isotropic in the sense that, for every orthogonal transformation $r:\mathbb{R}^d\rightarrow\mathbb{R}^d$ which preserves the coordinate axes, for every $x\in\mathbb{R}^d$, \begin{equation}\label{isotropy} (A(rx,\omega),b(rx,\omega))\;\;\textrm{and}\;\;(rA(x,\omega)r^t,rb(x,\omega))\;\;\textrm{have the same law.}\end{equation}  Finally, the diffusion is a small perturbation of Brownian motion.  There exists $\eta_0>0$, to be fixed small in line (\ref{constants}) of Section \ref{inductive}, such that, for all $x\in\mathbb{R}^d$ and $\omega\in\Omega$, \begin{equation}\label{perturbation} \abs{b(x,\omega)}\leq\eta_0\;\;\textrm{and}\;\;\abs{A(x,\omega)-I}\leq \eta_0.\end{equation}

The remaining two assumptions concern the domain.  First, the domain \begin{equation}\label{domain_bounded} U\subset\mathbb{R}^d\;\;\textrm{is open and bounded.}\end{equation}  And second, $U$ satisfies an exterior ball condition.  There exists $r_0>0$ so that, for each $x\in\partial U$ there exists $x^*\in\mathbb{R}^d$ satisfying \begin{equation}\label{exterior} \overline{B}_{r_0}(x^*)\cap \overline{U}=\left\{x\right\}.\end{equation}

To avoid lengthy statements, a steady assumption is made.  \begin{equation}\label{steady}\textrm{Assume}\;(\ref{transgroup}), (\ref{stationary}), (\ref{bounded}), (\ref{Lipschitz}), (\ref{elliptic}), (\ref{finitedep}), (\ref{isotropy}), (\ref{perturbation}),  (\ref{domain_bounded})\;\textrm{and}\;(\ref{exterior}).\end{equation}

Observe that (\ref{bounded}), (\ref{Lipschitz}) and (\ref{elliptic}) guarantee, for every environment $\omega\in\Omega$ and initial distribution $x\in\mathbb{R}^d$, the well-posedness of the martingale problem associated to the generator $$\frac{1}{2}\sum_{i,j=1}^da_{ij}(x,\omega)\frac{\partial^2}{\partial x_i\partial x_j}+\sum_{i=1}^db_i(x,\omega)\frac{\partial}{\partial x_i},$$ see Strook and Varadhan \cite[Chapter~6,7]{SV}.  The associated probability measure and expectation on the space of continuous paths $\C([0,\infty);\mathbb{R}^d)$ will be respectively denoted $P_{x,\omega}$ and $E_{x,\omega}$ where, almost surely with respect to $P_{x,\omega}$, paths $X_t\in\C([0,\infty);\mathbb{R}^d)$ satisfy the stochastic differential equation \begin{equation}\label{sde}\left\{\begin{array}{l} dX_t=b(X_t,\omega)dt+\sigma(X_t,\omega)dB_t, \\ X_0=x,\end{array}\right.\end{equation} for $A(x,\omega)=\sigma(x,\omega)\sigma(x,\omega)^t$, and for $B_t$ some standard Brownian motion under $P_{x,\omega}$ with respect to the canonical right-continuous filtration on $\C([0,\infty);\mathbb{R}^d)$.

As mentioned in the introduction, the translational and rotational invariance implied in law by (\ref{stationary}) and (\ref{isotropy}) do not imply any invariance properties, in general, for the quenched measures $P_{x,\omega}$.  However, the annealed measures and expectations do inherit these properties.  Precisely, defining the semi-direct product measures $\mathbb{P}_x=\mathbb{P}\ltimes P_{x,\omega}$ and $\mathbb{E}_x=\mathbb{E}\ltimes E_{x,\omega}$ on $\Omega\times\C([0,\infty);\mathbb{R}^d)$, for all $x,y\in\mathbb{R}^d$, \begin{equation}\label{annealed} \mathbb{E}_{x+y}(X_t)=\mathbb{E}_y(x+X_t)=x+\mathbb{E}_y(X_t),\end{equation} and, for every orthogonal transformation $r$ preserving the coordinate axis, for every $x\in\mathbb{R}^d$, \begin{equation}\label{annealed1} \mathbb{E}_{x}(rX_t)=\mathbb{E}_{rx}(X_t).\end{equation}  This fact plays an important role in \cite{SZ} to preclude, with probability one, the emergence of ballistic behavior of the rescaled process in the asymptotic limit.

Define as well, for each $n\geq 0$ and $x\in\mathbb{R}^d$, the Wiener measure $W^n_x$ and expectation $E^{W^n_x}$ on $\C([0,\infty);\mathbb{R}^d)$ corresponding to Brownian motion on $\mathbb{R}^d$ with variance $\alpha_n$ beginning from $x$.  Almost surely with respect to $W^n_x$, paths $X_t\in\C([0,\infty);\mathbb{R}^d)$ satisfy the stochastic differential equation \begin{equation}\label{sde_brownian}\left\{\begin{array}{l} dX_t=\sqrt{\alpha_n}dB_t, \\ X_0=x,\end{array}\right.\end{equation} for $B_t$ some standard Brownian motion under $W^n_x$ with respect to the canonical right-continuous filtration on $\C([0,\infty);\mathbb{R}^d)$.

\subsection{A Remark on Existence and Uniqueness}

The boundedness, Lipschitz continuity and ellipticity of the coefficients, see (\ref{bounded}), (\ref{Lipschitz}) and (\ref{elliptic}), together with the boundedness and regularity of the domain, see (\ref{domain_bounded}) and (\ref{exterior}), guarantee the well-posedness, for every $\omega\in\Omega$, of equations like $$\left\{\begin{array}{ll}\frac{1}{2}\tr(A(x,\omega)D^2w)+b(x,\omega)\cdot Dw=g(x) & \textrm{on}\;\;U, \\ u=f(x) & \textrm{on}\;\;\partial U,\end{array}\right.$$ for every $f\in C(\partial U)$ and $g\in\C(\overline{U})$ in the class of bounded continuous functions.  See, for instance, Friedman \cite[Chapter~3]{Fr}.  Furthermore, if $\tau$ denotes the exit time from $U$, then the solution admits the representation $$u(x)=E_{x,\omega}(f(X_\tau)-\int_0^\tau g(X_s)\;ds)\;\;\textrm{on}\;\;\overline{U},$$ see {\O}ksendal \cite[Exercise~9.12]{Oksendal}.

The same assumptions ensure the well-posedness of parabolic equations like $$\left\{\begin{array}{ll} w_t=\frac{1}{2}\tr(A(x,\omega)D^2w)+b(x,\omega)\cdot Dw & \textrm{on}\;\;\mathbb{R}^d\times(0,\infty), \\ w=f(x) & \textrm{on}\;\;\mathbb{R}^d\times\left\{0\right\},\end{array}\right.$$ for continuous initial data $f(x)$ satisfying, for instance and to the extent that it will be applied in this paper, $\abs{f(x)}\leq C(1+\abs{x}^2)$ on $\mathbb{R}^d$, in the class of continuous functions satisfying a quadratic estimate of the same form locally in time.  See \cite[Chapter~1]{Fr}.  Furthermore, $$w(x,t)=E_{x,\omega}(f(X_t))\;\;\textrm{on}\;\;\mathbb{R}^d\times(0,\infty),$$ see \cite[Exercise~9.12]{Oksendal}.

Analogous formulas hold for the constant coefficient elliptic and parabolic equations associated to Brownian motion and the measures $W^n_x$.  Since these facts are well-known, and since the solution to every equation encountered in this paper admits an explicit probabilistic description, the presentation will not further emphasize these points.

\section{The Inductive Framework and Probabilistic Statement}\label{inductive}

In this section, the aspects of \cite{SZ} most relevant to this work are briefly explained.  A complete description of the inductive framework can be found in \cite{SZ}, and it was later reviewed in the introduction of \cite{F1}.  

Assume the dimension $d$ satisfies \begin{equation}\label{dimension} d\geq 3, \end{equation} and fix a H\"older exponent \begin{equation}\label{Holderexponent} \beta\in\left(0,\frac{1}{2}\right]\;\;\textrm{and a scaling constant}\;\;a\in \left(0,\frac{\beta}{1000d}\right]. \end{equation}

The following constants will come to define the scales in length and time along which the induction scheme is propagated.  Let $L_0$ be integer multiple of five which will later be fixed large in (\ref{constants}).  For each $n\geq 0$, define inductively \begin{equation}\label{L} \ell_n=5\left[\frac{L_n^a}{5}\right]\;\;\textrm{and}\;\;L_{n+1}=\ell_n L_n, \end{equation} where it follows that, for every $L_0$ sufficiently large, $\frac{1}{2}L_n^{1+a}\leq L_{n+1}\leq 2L_n^{1+a}$.  For $c_0>0$ to be fixed small in (\ref{constants}), for each $n\geq 0$, define \begin{equation}\label{kappa} \kappa_n=\exp(c_0(\log\log(L_n))^2)\;\;\textrm{and}\;\;\tilde{\kappa}_n=\exp(2c_0(\log\log(L_n))^2),\end{equation} and observe that, as $n\rightarrow\infty$, the constants $\kappa_n$ are eventually dominated by every positive power of $L_n$.  Furthermore, for each $n\geq 0$, define\begin{equation}\label{D} D_n=L_n\kappa_n\;\;\textrm{and}\;\;\tilde{D}_n=L_n\tilde{\kappa}_n,\end{equation} where, using the preceding remark, the scales $D_n$ and $\tilde{D}_n$ are larger but grow comparably with the previously defined scales $L_n$.

The remaining constants enter into the primary probabilistic statement, see Theorem \ref{induction}, and the H\"older estimates governing the convergence of solutions to the parabolic equation (\ref{prob_eq}), see Theorem \ref{effectivediffusivity} and Control \ref{Holder}.  Fix $m_0\geq 2$ satisfying \begin{equation}\label{m0} (1+a)^{m_0-2}\leq 100<(1+a)^{m_0-1}, \end{equation}  and $\delta>0$ and $M_0>0$ satisfying \begin{equation}\label{delta} \delta=\frac{5}{32}\beta\;\;\textrm{and}\;\;M_0\geq100d(1+a)^{m_0+2}.\end{equation}  In what follows, it is essential that $\delta$ and $M_0$ are sufficiently larger than $a$.

In order to exploit the environment's mixing properties, it will be frequently necessary to introduce a stopped version of the process.  Define for every element $X_t\in \C([0,\infty);\mathbb{R}^d)$ the path \begin{equation}\label{prob_tail}X_t^*=\sup_{0\leq s\leq t}\abs{X_s-X_0},\end{equation} and, for each $n\geq 0$, the stopping time $$T_n=\inf\left\{s\geq 0\;|\;X_s^*\geq \tilde{D}_n\right\}.$$  The effective diffusivity of the ensemble at scale $L_n$ is defined by $$\alpha_n=\frac{1}{2d}\mathbb{E}_0\left(\abs{X_{L_n^2\wedge T_n}}^2\right),$$ where the localization ensures that the $\alpha_n$ are local quantities on scale $\tilde{D}_n$.  The convergence of the $\alpha_n$ to a limiting diffusivity $\overline{\alpha}$ is proven in \cite[Proposition~5.7]{SZ}.

\begin{thm}\label{effectivediffusivity} Assume (\ref{steady}).  There exists $L_0$ and $c_0$ sufficiently large and $\eta_0>0$ sufficiently small such that, for all $n\geq 0$, $$\frac{1}{2\nu}\leq \alpha_n\leq 2\nu\;\;\textrm{and}\;\;\abs{\alpha_{n+1}-\alpha_n}\leq L_n^{-(1+\frac{9}{10})\delta},$$  which implies the existence of $\overline{\alpha}>0$ satisfying $$\frac{1}{2\nu}\leq \overline{\alpha}\leq 2\nu\;\;\textrm{and}\;\;\lim_{n\rightarrow\infty}\alpha_n=\overline{\alpha}.$$\end{thm}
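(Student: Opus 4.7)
The plan splits into two ingredients: the uniform bound $\tfrac{1}{2\nu}\leq \alpha_n\leq 2\nu$ for each $n$, and the summable difference estimate $|\alpha_{n+1}-\alpha_n|\leq L_n^{-(1+9/10)\delta}$. Since $L_n$ grows like $L_0^{(1+a)^n}$, the series $\sum_n L_n^{-(1+9/10)\delta}$ converges rapidly, so the difference estimate yields a limit $\overline{\alpha}$ inheriting the two-sided bound from the $\alpha_n$.

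For the uniform bounds, I would apply It\^o's formula to $|X_t|^2$ under the quenched measure $P_{0,\omega}$, stopped at $L_n^2\wedge T_n$. Taking $\mathbb{E}_0$ and killing the martingale piece leaves
$$\mathbb{E}_0\bigl(|X_{L_n^2\wedge T_n}|^2\bigr)=\mathbb{E}_0\int_0^{L_n^2\wedge T_n}\bigl(\tr A(X_s,\omega)+2\,X_s\cdot b(X_s,\omega)\bigr)\,ds.$$
Ellipticity gives $d/\nu\leq \tr A\leq d\nu$, which after the intended time-normalization produces the bulk of the claimed bound. The drift term is controlled by the perturbative hypothesis $|b|\leq \eta_0$ together with the localization $|X_s|\leq \tilde D_n$ in force up to $T_n$; the annealed rotational invariance (\ref{annealed1}) forces the leading contribution of $\mathbb{E}_0(X_s\cdot b(X_s,\omega))$ to vanish, and the remainder is absorbed by choosing $\eta_0$ small.

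For the difference estimate, the heart of the matter is the inductive parabolic comparison at scale $L_n$ alluded to in Theorem \ref{induction}: on an event of overwhelming probability, solutions of the parabolic problem (\ref{intro_outline_1}) rescaled to scale $L_n$ are close in the rescaled H\"older norm to solutions of the heat equation with variance $\alpha_n$, with error $L_n^{-(1+9/10)\delta}$. The strategy is to write $\alpha_{n+1}$ in terms of the process at time $L_{n+1}^2\wedge T_{n+1}$, partition $[0,L_{n+1}^2]$ into roughly $\ell_n^2$ blocks of length $L_n^2$, and replace the conditional expectation of the squared increment on each block by the Brownian value associated with variance $\alpha_n$. The localization at $\tilde D_n$ together with finite-range dependence (\ref{finitedep}) lets one treat distant blocks as essentially independent, and the tail bounds from the inductive scheme control the losses arising from atypical environments and from the event $\{T_{n+1}<L_{n+1}^2\}$. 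Summing these comparisons and renormalizing reproduces $\alpha_n$ up to an error bounded by $L_n^{-(1+9/10)\delta}$.

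The main obstacle is calibrating the constants so the induction actually closes. Because $L_{n+1}/L_n\approx L_n^a$ and the decay rate must dominate the multiplicative price of jumping one scale, including the tail contributions at scale $\tilde D_n$ that grow only subpolynomially in $L_n$ through the $\kappa_n, \tilde\kappa_n$ factors, one needs $\delta=\tfrac{5\beta}{32}$ comfortably larger than $a$, together with $L_0$ and $c_0$ large and $\eta_0$ small, matching the hypotheses of the theorem. Once the difference bound is in force the sequence $\{\alpha_n\}$ is Cauchy with an explicit summable tail, and the limit $\overline{\alpha}$ automatically inherits $\tfrac{1}{2\nu}\leq \overline{\alpha}\leq 2\nu$.
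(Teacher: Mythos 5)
The paper itself does not prove this statement: it is imported verbatim from Sznitman--Zeitouni, the text pointing to \cite[Proposition~5.7]{SZ}, so what follows is an assessment of your argument on its own terms rather than a comparison with a proof printed in this paper.

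Your argument for the two-sided bound $\tfrac{1}{2\nu}\leq\alpha_n\leq2\nu$ has a genuine gap, and it lies exactly in the treatment of the drift term. After the It\^o step and the implicit time-normalization, the drift correction to $\alpha_n$ is
$$\frac{1}{2dL_n^2}\,\mathbb{E}_0\!\int_0^{L_n^2\wedge T_n} 2\,X_s\cdot b(X_s,\omega)\,ds.$$
The crude bound you invoke, $|b|\leq\eta_0$ together with $|X_s|\leq\tilde D_n$ on $\{s<T_n\}$, yields an error of order $\eta_0\tilde D_n=\eta_0 L_n\tilde\kappa_n$, which \emph{diverges} in $n$; no admissible choice of a \emph{fixed} $\eta_0$ absorbs it for all $n$. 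The appeal to the annealed isotropy \eqref{annealed1} does not rescue this: that relation gives $\mathbb{E}_0(X_t)=0$, but it says nothing about $\mathbb{E}_0\bigl(X_s\cdot b(X_s,\omega)\bigr)$. That quantity is a scalar, hence automatically invariant under axis-preserving rotations and reflections (including $x\mapsto-x$, which sends $(X_s,b(X_s,\omega))\mapsto(-X_s,-b(X_s,\omega))$ and leaves the inner product unchanged), so isotropy imposes no vanishing. Nor does stationarity of $b$: since $X_s$ is built from the very environment $\omega$, the pair $(X_s, b(X_s,\omega))$ is correlated under $\mathbb{P}_0$, and the isotropic, finite-range model genuinely allows a nonzero radial bias at scale $L_n$ (this is precisely the trapping phenomenon controlled in \cite[Proposition~3.3]{SZ}). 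An It\^o decomposition separating ``trace'' from ``drift'' simply does not give a termwise bound here; the diffusive normalization of $\alpha_n$ is a consequence of the renormalization scheme, not of a single stochastic-calculus identity.

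The correct logical structure, and the one in \cite{SZ}, is that the uniform bound and the difference estimate are \emph{not} independent steps but propagate together: the It\^o/perturbative argument is only adequate at the base scale $n=0$ (where $\tilde D_0$ is a fixed constant once $L_0,c_0$ are fixed, so a sufficiently small $\eta_0$ does absorb the drift error), after which $\tfrac{1}{2\nu}\leq\alpha_n\leq 2\nu$ for $n\geq 1$ follows from $\alpha_0$ together with the summable increments $|\alpha_{n+1}-\alpha_n|\leq L_n^{-(1+9/10)\delta}$, exactly as in your concluding paragraph. Your outline of the difference estimate itself --- block decomposition of $[0,L_{n+1}^2]$ at scale $L_n^2$, using the parabolic comparison with variance $\alpha_n$, finite-range independence across blocks, and the localization tails for atypical environments and the event $\{T_{n+1}<L_{n+1}^2\}$ --- is qualitatively the right picture, though it is stated at the level of plan rather than proof. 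The single substantive error is presenting the uniform bound as a standalone consequence of It\^o plus isotropy for all $n$; restricting that argument to $n=0$ and letting the difference estimate do the rest repairs the structure.
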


The results of \cite{SZ} obtain an effective comparison on the parabolic scale $(L_n, L_n^2)$ in space and time, with improving probability as $n\rightarrow\infty$, between solutions \begin{equation}\label{prob_eq} \left\{\begin{array}{ll} u_t=\frac{1}{2}\tr(A(x,\omega)D^2u)+b(x,\omega)\cdot Du & \textrm{on}\;\;\mathbb{R}^d\times(0,\infty), \\ u=f(x) & \textrm{on}\;\;\mathbb{R}^d\times\left\{0\right\},\end{array}\right.\end{equation} and solutions of the approximate limiting equation \begin{equation}\label{prob_approx}\left\{\begin{array}{ll} u_{n,t}=\frac{\alpha_n}{2}\Delta u_n & \textrm{on}\;\;\mathbb{R}^d\times(0,\infty), \\ u_n=f(x) & \textrm{on}\;\;\mathbb{R}^d\times\left\{0\right\}.\end{array}\right.\end{equation}  To simplify the notation, for each $n\geq 0$, define the operators \begin{equation}\label{prob_operators} R_nf(x)=u(x,L_n^2)\;\;\textrm{and}\;\;\overline{R}_nf(x)=u_n(x,L_n^2),\end{equation} and the difference \begin{equation}\label{prob_difference} S_nf(x)=R_nf(x)-\overline{R}_nf(x).\end{equation}

Since solutions of (\ref{prob_eq}) are not, in general, effectively comparable with solutions of (\ref{prob_approx}) globally in space, it is necessary to localize using a cutoff function.   For each $v>0$, define \begin{equation}\label{cutoff} \chi(y)=1\wedge(2-\abs{y})_+\;\;\textrm{and}\;\;\chi_{v}(y)=\chi\left(\frac{y}{v}\right), \end{equation}  and, for each $x\in\mathbb{R}^d$ and $n\geq 0$, \begin{equation}\label{cutoff1}  \chi_{n,x}(y)=\chi_{30\sqrt{d}L_n}(y-x).\end{equation}  Furthermore, since the comparison of the solutions must necessarily respect the scaling associated to (\ref{intro_eq}) and (\ref{intro_eq_res}), it is obtained with respect to the rescaled global H\"older-norms, defined for each $n\geq 0$, \begin{equation}\label{prob_Holder} \abs{f}_n=\norm{f}_{L^\infty(\mathbb{R}^d)}+\sup_{x\neq y}L_n^\beta\frac{\abs{f(x)-f(y)}}{\abs{x-y}^\beta}.\end{equation}  See, for instance, the introductions of \cite{F1,SZ} for a more complete discussion concerning the necessity of these norms as opposed, perhaps, to attempting a generically false $L^\infty$-contraction.

The following estimate is the statement propagated by the arguments of \cite{SZ}, and expresses a comparison between solutions of (\ref{prob_eq}) and (\ref{prob_approx}).  Observe that this statement is not true, in general, for all triples $x\in\mathbb{R}^d$, $\omega\in\Omega$ and $n\geq 0$.  However, as described in Theorem \ref{induction} below, it is shown in \cite[Proposition~5.1]{SZ} that such controls are available for large $n$, with high probability and on a large portion of space.

\begin{con}\label{Holder}  Fix $x\in\mathbb{R}^d$, $\omega\in\Omega$ and $n\geq 0$.  Then, for each $f\in C^{0,\beta}(\mathbb{R}^d)$, $$\abs{\chi_{n,x}S_nf}_n\leq L_n^{-\delta}\abs{f}_n.$$\end{con}

In order to account for the error introduced by localization, it is necessary to obtain tail-estimates for the diffusion in random environment.  Recall that $P_{x,\omega}$ is the measure on $\C([0,\infty);\mathbb{R}^d)$ describing the diffusion beginning from $x\in\mathbb{R}^d$ in environment $\omega$ and associated to the generator $$\frac{1}{2}\sum_{i,j=1}^da_{ij}(x,\omega)\frac{\partial^2}{\partial x_i \partial x_j}+\sum_{i=1}^db_i(x,\omega)\frac{\partial}{\partial x_i}.$$  The type of control propagated in \cite{SZ} is an exponential estimate for the probability, under $P_{x,\omega}$, that the maximal excursion $X^*_{L_n^2}$ defined in (\ref{prob_tail}) is large with respect to the time elapsed.

As with Control \ref{Holder}, it is simply false in general that this type of estimate is satisfied for every triple $(x,\omega,n)$.  However, it is shown in \cite[Proposition~2.2]{SZ} that such controls are available for large $n$, with high probability, on a large portion of space.

\begin{con}\label{localization}  Fix $x\in\mathbb{R}^d$, $\omega\in\Omega$ and $n\geq 0$.  For each $v\geq D_n$, for all $\abs{y-x}\leq 30\sqrt{d}L_n$, $$P_{y,\omega}(X^*_{L_n^2}\geq v)\leq \exp(-\frac{v}{D_n}).$$\end{con}

It was shown that, provided the perturbation $\eta_0$ is sufficiently small, Controls \ref{Holder} and \ref{localization} are available with high probability.  Precisely, define for each $n\geq 0$ and $x\in\mathbb{R}^d$, the event \begin{equation}\label{mainevent} B_n(x)=\left\{\;\omega\in\Omega\;|\;\textrm{Controls \ref{Holder} and \ref{localization} hold for the triple}\;(x,\omega,n).\;\right\},\end{equation}  and notice that, in view of (\ref{stationary}), for all $x\in\mathbb{R}^d$ and $n\geq 0$, \begin{equation}\label{mainevent1}\mathbb{P}(B_n(x))=\mathbb{P}(B_n(0)).\end{equation}  Furthermore, observe that $B_n(0)$ does not include the control of traps described in \cite[Proposition~3.3]{SZ}, which play an important role in propagating Control \ref{Holder}, and from which the arguments of this paper have no further need.  The following theorem proves that the compliment of $B_n(0)$ approaches zero as $n$ tends to infinity, see \cite[Theorem~1.1]{SZ}.

\begin{thm}\label{induction}  Assume (\ref{steady}).  There exist $L_0$ and $c_0$ sufficiently large and $\eta_0>0$ sufficiently small such that, for each $n\geq 0$, $$\mathbb{P}\left(\Omega\setminus B_n(0)\right)\leq L_n^{-M_0}.$$\end{thm}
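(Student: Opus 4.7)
The plan is to proceed by induction on $n$, following the renormalization scheme of Sznitman and Zeitouni. For the base of the induction, one takes $L_0$ sufficiently large and $\eta_0$ sufficiently small; the uniform smallness (\ref{perturbation}) forces $X_t$ to stay within a constant-factor comparison with standard Brownian motion on every window $[0,L_n^2]\times B_{\tilde D_n}$ with $L_n$ of order $L_0$. Direct heat-kernel comparison and the Bernstein-type martingale estimate for the SDE (\ref{sde}) then verify both Controls \ref{Holder} and \ref{localization} deterministically for every $\omega$ on any bounded range of $n$, which is enough to start the induction since $L_n^{-M_0}$ is only meaningful once $n$ is large.

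For the inductive step, assume $\mathbb{P}(\Omega\setminus B_n(0))\leq L_n^{-M_0}$. To obtain Control \ref{localization} at the next scale, I would cover the ball of radius $30\sqrt{d}L_{n+1}$ by $O(\ell_n^d)=O(L_n^{ad})$ translates of the ball of radius $30\sqrt{d}L_n$; by stationarity (\ref{mainevent1}) and a union bound, all the required scale-$n$ controls are available at these centers except on an event of probability $O(L_n^{ad-M_0})$. On the remaining favorable event I decompose a path of length $L_{n+1}^2=\ell_n^2 L_n^2$ into $\ell_n^2$ increments of length $L_n^2$ via the strong Markov property, iterate the scale-$n$ exponential tail, and sum a geometric series to deduce the scale-$(n+1)$ tail with the new constant $D_{n+1}=L_{n+1}\kappa_{n+1}$, using that $\kappa_n$ grows sub-polynomially in $L_n$.

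The heart of the argument is the propagation of Control \ref{Holder}. Using the telescoping identity
\begin{equation*}
 R_{n+1}f-\overline{R}_{n+1}f \,=\, \sum_{k=0}^{\ell_n^2-1} R_n^{\,k}\bigl(R_n-\overline{R}_n\bigr)\overline{R}_n^{\,\ell_n^2-1-k}f,
\end{equation*}
together with a negligible semigroup-replacement error arising from $|\alpha_{n+1}-\alpha_n|\leq L_n^{-(1+9/10)\delta}$ (Theorem \ref{effectivediffusivity}), I would apply Control \ref{Holder} at scale $n$ to each summand $(R_n-\overline{R}_n)g$ with $g=\overline{R}_n^{\,\ell_n^2-1-k}f$. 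The localization cutoff $\chi_{n,x}$ is removed at an error controlled by the tails $P_{y,\omega}(X^*_{L_n^2}\geq v)\leq \exp(-v/D_n)$ furnished by Control \ref{localization}. Each term contributes a gain of $L_n^{-\delta}|g|_n$, and parabolic smoothing of the constant-coefficient semigroup $\overline{R}_n$ yields $|g|_n\lesssim |f|_{n+1}$. Summing the $\ell_n^2\leq L_n^{2a}$ terms produces a total gain of order $L_n^{2a-\delta}$, stronger than the required $L_{n+1}^{-\delta}$ by the choice of $a$ and $\delta$ in (\ref{Holderexponent})--(\ref{delta}).

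The principal obstacle is the control of \emph{traps}: neighborhoods where, even though Control \ref{Holder} holds at scale $n$ for the triples $(y,\omega,n)$ with $|y-x|\leq 30\sqrt{d}L_n$, the random drift can localize the process for atypically long times and spoil the iteration. Following the trap estimate of \cite[Proposition~3.3]{SZ}, one introduces an auxiliary bad event at scale $n$ and bounds its probability by an annealed martingale/Kalikow-type argument that uses essentially the annealed symmetries (\ref{annealed})--(\ref{annealed1}) and the isotropy (\ref{isotropy}) to force the mean displacement of the walk to be asymptotically negligible. Discarding the trap event contributes an additional $L_n^{-M_0}$-type loss which, combined with the previous union-bound factors of $L_n^{(1+a)d}$ and the Hölder-propagation losses, is still comfortably bounded by $L_{n+1}^{-M_0}$ once $\eta_0$ is taken small and $M_0\geq 100d(1+a)^{m_0+2}$ as in (\ref{delta}), closing the induction.
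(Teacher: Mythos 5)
The paper does not prove Theorem \ref{induction} at all: it is quoted directly from Sznitman and Zeitouni as \cite[Theorem~1.1]{SZ}, where the full renormalization scheme occupies the bulk of that paper. (The only gloss offered here is that the event $B_n(0)$ is slightly \emph{larger} than the event controlled in \cite{SZ}, since the trap control of their Proposition 3.3 is not included, so the bound is if anything easier.) Your proposal undertakes to reconstruct the \cite{SZ} argument from scratch. You correctly identify its broad architecture: covering-plus-union-bound and Markov iteration for Control \ref{localization}, a telescoping decomposition for Control \ref{Holder}, the $\abs{\alpha_{n+1}-\alpha_n}$ semigroup-replacement error, and a trap estimate along the lines of \cite[Proposition~3.3]{SZ}.

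However, the quantitative step you claim closes the induction for Control \ref{Holder} is wrong, and it is not a small slip; it is the central difficulty. Explicitly, $L_{n+1}\approx L_n^{1+a}$, so the target bound at scale $n+1$ is $L_{n+1}^{-\delta}\approx L_n^{-(1+a)\delta}$. Your telescope together with $\abs{g}_n\lesssim\abs{f}_{n+1}$ (true, but gaining nothing) gives a total of order $\ell_n^2 L_n^{-\delta}\approx L_n^{2a-\delta}$, which you assert is ``stronger than the required $L_{n+1}^{-\delta}$.'' It is strictly weaker: since $a,\delta>0$, we have $2a-\delta>-(1+a)\delta$, so $L_n^{2a-\delta}$ is \emph{larger} than $L_n^{-(1+a)\delta}$. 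Summing $\ell_n^2$ terms of size $L_n^{-\delta}$ accrues a loss $L_n^{2a}$, while the target only gives you an extra $L_n^{-a\delta}$ to spend, and since $a\ll\delta$ under (\ref{Holderexponent}) and (\ref{delta}) the deficit $L_n^{2a+a\delta}$ does not vanish. This is precisely why the induction in \cite{SZ} is technically demanding: a naive telescope with the H\"older control in hand does \emph{not} propagate it. Their argument uses a considerably finer perturbation expansion that separates contributions from good and bad scale-$n$ boxes and interleaves the trap control with (rather than appends it after) the H\"older propagation, and none of these additional gains appear in your sketch. As written, the induction does not close.
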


Henceforth, the constants $L_0$, $c_0$ and $\eta_0$ are fixed to satisfy the requirements of Theorems \ref{effectivediffusivity} and \ref{induction}.  \begin{equation}\label{constants} \textrm{Fix constants}\;L_0, c_0\;\textrm{and}\;\eta_0\;\textrm{satisfying the hypothesis of Theorems \ref{effectivediffusivity} and \ref{induction}.}\end{equation}

The events which come to define, following an application of the Borel-Cantelli lemma, the event on which Theorem \ref{intro_main} is obtained are chosen to ensure that Controls \ref{Holder} and \ref{localization} are available at a sufficiently small scale in comparison to $\frac{1}{\epsilon}$.  Fix the smallest integer $\overline{m}>0$ satisfying \begin{equation}\label{prob_m} \overline{m}>1-\frac{\log(1-12a-a^2)}{\log(1+a)},\end{equation} and notice that the definition of $L_n$ in (\ref{L}) implies that, for $C>0$ independent of $n\geq\overline{m}$, $$L_{n+1}\tilde{D}_{n-\overline{m}}\leq CL_{n-1}^{2-10a}.$$  Observe as well that this definition is stronger than was necessary for the arguments of \cite{F3}.

Theorem \ref{induction} is now used to obtain Control \ref{Holder} and Control \ref{localization} at scale $L_{n-\overline{m}}$ on the entirety of the rescaled domain $U/\epsilon$ whenever $L_n\leq \frac{1}{\epsilon}<L_{n+1}$.  It follows from the boundedness of $U$ and the definition of $L_n$ that, for all $n\geq 0$ sufficiently large, whenever $L_n\leq\frac{1}{\epsilon}<L_{n+1}$, the rescaled domain $U/\epsilon$ is contained in what becomes the considerably larger set $[-\frac{1}{2}L_{n+2}^2, \frac{1}{2}L_{n+2}^2]^d$.  Therefore, define, for each $n\geq \overline{m}$, \begin{multline}\label{prob_event_1} A_n=\left\{\;\omega\in\Omega\;|\;\omega\in B_m(x)\;\;\textrm{for all}\;\;x\in L_m\mathbb{Z}^d\cap[-L_{n+2}^2, L_{n+2}^2]^d\;\;\textrm{and}\right. \\ \left.\textrm{for all}\;\;n-\overline{m}\leq m\leq n+2\right\}.\end{multline}  The following proposition proves that, as $n\rightarrow\infty$, the probability of the events $A_n$ rapidly approaches one, since the exponent $$2d(1+a)^2-\frac{M_0}{2}<0$$ is negative owing to (\ref{Holderexponent}) and (\ref{delta}).

\begin{prop}\label{prob_probability}  Assume (\ref{steady}) and (\ref{constants}).  For each $n\geq \overline{m}$, for $C>0$ independent of $n$, $$\mathbb{P}(\Omega\setminus A_n)\leq CL_n^{2d(1+a)^2-\frac{1}{2}M_0}.$$\end{prop}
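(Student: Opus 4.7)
\medskip

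\noindent\textbf{Proof plan for Proposition \ref{prob_probability}.}
The strategy is a direct union bound. By definition (\ref{prob_event_1}),
\[
\Omega\setminus A_n=\bigcup_{m=n-\overline{m}}^{n+2}\ \bigcup_{x\in L_m\mathbb{Z}^d\cap [-L_{n+2}^2,L_{n+2}^2]^d}\bigl(\Omega\setminus B_m(x)\bigr),
\]
so the first move is to write
\[
\mathbb{P}(\Omega\setminus A_n)\leq\sum_{m=n-\overline{m}}^{n+2}\ \sum_{x\in L_m\mathbb{Z}^d\cap [-L_{n+2}^2,L_{n+2}^2]^d}\mathbb{P}(\Omega\setminus B_m(x)).
\]

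By the translational invariance (\ref{mainevent1}) and Theorem \ref{induction}, every summand is bounded by $L_m^{-M_0}$. Counting lattice points gives
\[
\bigl|L_m\mathbb{Z}^d\cap[-L_{n+2}^2,L_{n+2}^2]^d\bigr|\leq C\,\bigl(L_{n+2}^2/L_m\bigr)^d,
\]
for $n$ sufficiently large (so that $L_m\leq L_{n+2}^2$). Substituting, the right-hand side above is at most
\[
C\,L_{n+2}^{2d}\sum_{m=n-\overline{m}}^{n+2}L_m^{-d-M_0}.
\]
The sum has a uniformly bounded number of terms (namely $\overline{m}+3$), and each term is dominated by the smallest-$m$ term, so the display is at most $C L_{n+2}^{2d} L_{n-\overline{m}}^{-d-M_0}$.

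To convert this back to a power of $L_n$, I will iterate the bound $L_{k+1}\leq 2L_k^{1+a}$ from (\ref{L}). Going up two steps yields $L_{n+2}\leq C L_n^{(1+a)^2}$, hence $L_{n+2}^{2d}\leq C L_n^{2d(1+a)^2}$. Going down $\overline{m}$ steps yields $L_n\leq C L_{n-\overline{m}}^{(1+a)^{\overline{m}}}$, so
\[
L_{n-\overline{m}}^{-(d+M_0)}\leq C L_n^{-(d+M_0)/(1+a)^{\overline{m}}}.
\]
It remains to check that the exponent satisfies $(d+M_0)/(1+a)^{\overline{m}}\geq M_0/2$, equivalently $(1+a)^{\overline{m}}\leq 2+2d/M_0$. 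Since $M_0\geq 100d(1+a)^{m_0+2}\geq 100d$ by (\ref{delta}), the right-hand side is at least $2.02$, while the choice (\ref{Holderexponent}) of $a\leq\beta/(1000d)$ together with the explicit bound $\overline{m}\leq 1-\log(1-12a-a^2)/\log(1+a)+1$ given by (\ref{prob_m}) makes $(1+a)^{\overline{m}}$ as close to $1$ as desired, well below $2.02$. Combining these two estimates gives
\[
\mathbb{P}(\Omega\setminus A_n)\leq C\,L_n^{2d(1+a)^2-M_0/2},
\]
which is the claim.

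The proof is essentially a bookkeeping exercise, and I do not expect a genuine obstacle: the only subtlety is confirming that the tension between the $L_{n+2}^{2d}$ factor coming from counting lattice points at the large scale $L_{n+2}^2$ and the factor $L_{n-\overline{m}}^{-(d+M_0)}$ coming from evaluating $\mathbb{P}(\Omega\setminus B_m)$ at the small scale $L_{n-\overline{m}}$ is resolved by the calibration between $M_0$, $a$, and $\overline{m}$. The assumption $M_0\geq 100d(1+a)^{m_0+2}$ from (\ref{delta}) is precisely the margin that absorbs the $2d(1+a)^2$ term and still leaves a useful negative exponent, so the final bound is negative by (\ref{Holderexponent}) and (\ref{delta}), which is what allows the Borel--Cantelli application alluded to before the proposition.
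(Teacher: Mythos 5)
Your proposal is correct and takes essentially the same approach as the paper: union bound over scales $m$ and lattice sites, apply Theorem \ref{induction} (via (\ref{mainevent1})) to each $B_m(x)$, count the $O((L_{n+2}^2/L_m)^d)$ lattice points, and convert powers of $L_{n+2}$ and $L_{n-\overline{m}}$ to powers of $L_n$ using the recursion (\ref{L}) together with the calibration between $a$, $\overline{m}$ and $M_0$. The only cosmetic differences are that the paper keeps each summand's $L_m^{-d}$ factor inside the exponent (then drops it) rather than extracting a worst-case term, and the paper verifies $(1+a)^{-\overline{m}}\geq \tfrac{1}{2}$ directly from the definition (\ref{prob_m}) rather than by the slightly informal ``as close to $1$ as desired'' phrasing; both routes yield the same bound.
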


\begin{proof}  Fix $n\geq\overline{m}$.  Theorem \ref{induction} implies using the definition of $L_n$ in (\ref{L}) that, for $C>0$ independent of $n$, $$\mathbb{P}(\Omega\setminus A_n)\leq \sum_{m=n-\overline{m}}^{n+2}(\frac{L_{n+2}^2}{L_{m}})^dL_m^{-M_0}\leq C\sum_{m=n-\overline{m}}^{n+2}L_n^{2d(1+a)^2-2d(1+a)^{m-n}-M_0(1+a)^{m-n}}.$$  Therefore, $$\mathbb{P}(\Omega\setminus A_n)\leq CL_n^{2d(1+a)^2-M_0(1+a)^{-\overline{m}}},$$ which, since the definition of $\overline{m}$ implies that $$(1+a)^{-\overline{m}}\geq (1+a)(\frac{2-10a}{1+a}-(1+a))\geq\frac{1}{2},\;\;\textrm{yields}\;\;\mathbb{P}(\Omega\setminus A_n)\leq CL_n^{2d(1+a)^2-\frac{M_0}{2}}$$ and completes the proof.\end{proof}

\section{The Global Coupling}\label{section_coupling}

The purpose of this section is to construct with high probability a coupling between the diffusion in random environment and a Brownian motion with variance $\alpha_{n-\overline{m}}$.  This will be achieved along the discrete sequence of time steps $\left\{kL_{n-\overline{m}}^2\right\}$ through the comparison implied by Control \ref{Holder}.   The choice of $\overline{m}$ in (\ref{prob_m}) is made to ensure that, for scales $L_n\leq \frac{1}{\epsilon}<L_{n+1}$, the subsequent discretization on scale $L_{n-\overline{m}}$ provides a sufficiently accurate description of the continuous process.  Notice, however, that it is not obvious a priori that such a discretization exists, since, for generic equations like (\ref{couple_eq}), it is necessary to apply a discretization on vanishing scales in the large domains to accurately represent the exit times and distributions in the asymptotic limit.

The coupling is motivated by the observation that the vector-valued solutions of the parabolic equation \begin{equation}\label{couple_eq}\left\{\begin{array}{ll} u_t=\frac{1}{2}\tr(A(x,\omega)D^2u)+b(x,\omega)\cdot Du & \textrm{on}\;\;\mathbb{R}^d\times(0,\infty), \\ u=\frac{x}{L_{n-\overline{m}}} & \textrm{on}\;\;\mathbb{R}^d\times\left\{0\right\},\end{array}\right.\end{equation} and the approximate homogenized equation \begin{equation}\label{couple_approx} \left\{\begin{array}{ll} u_{n,t}=\frac{\alpha_{n-\overline{m}}}{2}\Delta u_n & \textrm{on}\;\;\mathbb{R}^d\times(0,\infty), \\ u_n=\frac{x}{L_{n-\overline{m}}} & \textrm{on}\;\;\mathbb{R}^d\times\left\{0\right\},\end{array}\right.\end{equation} may be compared using Control \ref{Holder} which yields, following an application of Control \ref{localization} to localize the initial data, and due to the choice of constants in (\ref{L}) and (\ref{kappa}), \begin{equation}\label{couple_approx_100}\abs{u(0,L_{n-\overline{m}}^2)-u_n(0,L_{n-\overline{m}}^2)}=\abs{E_{0,\omega}(\frac{1}{L_{n-\overline{m}}}X_{L_{n-\overline{m}}^2})-E^{W_0^{n-\overline{m}}}(\frac{1}{L_{n-\overline{m}}}X_{L_{n-\overline{m}}^2})}\leq C\tilde{\kappa}_{n-\overline{m}} L_{n-\overline{m}}^{-\delta},\end{equation} where $W^{n-\overline{m}}_x$ is the Wiener measure on $\C([0,\infty);\mathbb{R}^d)$ corresponding to Brownian motion with variance $\alpha_{n-\overline{m}}$ beginning from $x$.

It follows formally that, provided (what will be discrete) copies of the diffusion in random environment $\tilde{X}_t$ and Brownian motion $\tilde{B}_t$ are chosen carefully and are defined with respect to the same measure on an auxiliary probability space $(\tilde{\Omega},\tilde{\mathcal{F}},\tilde{\mathbb{P}})$, a Chebyshev inequality will yield $$(\frac{\gamma}{L_{n-\overline{m}}})^\beta\tilde{\mathbb{P}}(\abs{\tilde{X}_{L_{n-\overline{m}}^2}-\tilde{B}_{L_{n-\overline{m}}^2}}^\beta\geq \gamma^\beta)\leq CL_{n-\overline{m}}^{-\delta}\tilde{\kappa}_{n-\overline{m}},$$ which implies \begin{equation}\label{couple_goal} \tilde{\mathbb{P}}(\abs{\tilde{X}_{L_{n-\overline{m}}^2}-\tilde{B}_{L_{n-\overline{m}}^2}}\geq \gamma)\leq CL_{n-\overline{m}}^{-\delta}\tilde{\kappa}_{n-\overline{m}}(\frac{L_{n-\overline{m}}}{\gamma})^\beta.\end{equation}  An application of the Kantorovich-Rubinstein theorem, see (\ref{couple_kr}), will justify the commutation of absolute value and integration appearing between (\ref{couple_approx_100}) and (\ref{couple_goal}).

Recall that solutions of (\ref{couple_eq}) with initial condition $f(x)$ admit a representation using the Green's function $$p_{t,\omega}(x,y):[0,\infty)\times\mathbb{R}^d\times\mathbb{R}^d\rightarrow\mathbb{R},$$ which represents the density of the diffusion beginning from $x$ in environment $\omega$ at time $t$, taking the form $$u(x,t)=E_{x,\omega}(f(X_t))=\int_{\mathbb{R}^d}p_{t,\omega}(x,y)f(y)\;dy.$$  See \cite[Chapter~1]{Fr} for a detailed discussion of the existence and regularity of these densities, and which follow from assumptions (\ref{bounded}), (\ref{Lipschitz}) and (\ref{elliptic}).  Analogously, solutions of (\ref{couple_approx}) with initial data $f(x)$ admit the heat kernel representation $$\overline{u}_n(x,t)=E^{W_x^{n-\overline{m}}}(f(X_t))=\int_{\mathbb{R}^d}(4\pi\alpha_{n-\overline{m}} t)^{-\frac{d}{2}}\exp(-\frac{\abs{y-x}^2}{4\alpha_{n-\overline{m}} t})f(y)\;dy.$$  The Kantorovich-Rubinstein theorem will be applied to compare the density of the diffusion in random environment against the heat kernel of variance $\alpha_{n-\overline{m}}$.

The Kantorovich-Rubinstein theorem, see Dudley \cite[Theorem~11.8.2]{D}, states that any pair of probability measures $\nu$ and $\nu'$ on $\mathbb{R}^d$ assigning finite mass to a given metric $d$, in the sense that \begin{equation}\label{couple_kr_1} \int_{\mathbb{R}^d}d(x,0)\;\nu(dx)<\infty\;\;\textrm{and}\;\;\int_{\mathbb{R}^d}d(x,0)\;\nu'(dx)<\infty,\end{equation} satisfy \begin{multline}\label{couple_kr} D(\nu,\nu')=\sup\left\{\abs{\int f\;d\nu-\int f\;d\nu'}\;|\;\abs{f(x)-f(y)}\leq d(x,y)\;\;\textrm{on}\;\;\mathbb{R}^d\times\mathbb{R}^d\right\} \\ =\inf\left\{\int_{\mathbb{R}^d\times\mathbb{R}^d}d(x,x')\;\rho(dx,dx')\;|\;\rho\;\textrm{is a probability measure on}\;\mathbb{R}^d\times\mathbb{R}^d\right. \\ \left. \textrm{with first and second marginals}\;\nu\;\textrm{and}\;\nu'\right\}.\end{multline}  The function $D(\cdot,\cdot)$ is referred to as the Kantorovich-Rubinstein or Wasserstein metric associated to $d$.

To ease the notation define, for each $n\geq 0$, $$p_{n-\overline{m},\omega}(x,y)=p_{L_{n-\overline{m}}^2,\omega}(x,y),$$ and the heat kernel $$\overline{p}_{n-\overline{m}}(x,y)=(4\pi\alpha_{n-\overline{m}} L_{n-\overline{m}}^2)^{-\frac{d}{2}}\exp(-\frac{\abs{y-x}^2}{4\alpha_{n-\overline{m}} L_{n-\overline{m}}^2}).$$  The following proposition constructs a Markov process $(X_k,\overline{X}_k)$ on the space $(\mathbb{R}^d\times\mathbb{R}^d)^{\mathbb{N}}$ such that the transition probabilities of first coordinate $X_k$ are determined by $p_{n-\overline{m},\omega}(\cdot,\cdot)$ and, such that those of the second coordinate $\overline{X}_k$ are determined by $\overline{p}_{n-\overline{m}}(\cdot, \cdot)$.  Furthermore, the difference $\abs{X_k-\overline{X_k}}$ satisfies a version of (\ref{couple_goal}) with respect to the underlying measure, where this comparison is obtained using the Kantorovich-Rubinstein Theorem applied to the metrics $$d_{n-\overline{m}}(x,y)=\abs{\frac{x-y}{L_{n-\overline{m}}}}^\beta.$$  The proof is omitted, since it appears in full as \cite[Proposition~5.1]{F3}, and represents only a small reformulation of \cite[Proposition~3.1]{SZ}.

Looking forward, keep in mind that the coupling will be applied to scales $L_{n}\leq \frac{1}{\epsilon}<L_{n+1}$, and it therefore follows from Proposition \ref{exit_main} of Section \ref{section_exit_time} that the coupling estimates do not decay prior to a point before which the diffusion has exited the domain with overwhelming probability.

\begin{prop}\label{couple_main}  Assume (\ref{steady}) and (\ref{constants}).  For every $\omega\in\Omega$, for every $x\in\mathbb{R}^d$, there exists a measure $Q_{n,x}$ on the canonical sigma algebra of the space $(\mathbb{R}^d\times\mathbb{R}^d)^{\mathbb{N}}$ such that, under $Q_{n,x}$, the coordinate processes $X_k$ and $\overline{X}_k$ respectively have the law of a Markov chain on $\mathbb{R}^d$, starting from $x$, with transition kernels $p_{n-\overline{m},\omega}(\cdot,\cdot)$ and $\overline{p}_{n-\overline{m}}(\cdot,\cdot)$.

Furthermore, for every $n\geq \overline{m}$, $\omega\in A_n$ and $x\in[-\frac{1}{2}L_{n+2}^2, \frac{1}{2}L_{n+2}^2]^d$, for $C>0$ independent of $n$, \begin{equation}\label{couple_main_0}Q_{n,x}(\abs{X_k-\overline{X}_k}\geq \gamma\;|\;\textrm{for some}\;0\leq k\leq 2(\frac{L_{n+2}}{L_{n-\overline{m}}})^2)\leq C(\frac{L_{n-\overline{m}}}{\gamma})^\beta(\frac{L_{n+2}}{L_{n-\overline{m}}})^4\tilde{\kappa}_{n-\overline{m}}L_{n-\overline{m}}^{-\delta}.\end{equation}\end{prop}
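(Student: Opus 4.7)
The plan is to leverage the Kantorovich-Rubinstein duality (\ref{couple_kr}) to convert the operator-level comparison of Control \ref{Holder} into a single-step coupling bound, and then to build the joint measure $Q_{n,x}$ by iterating optimal couplings along the discrete time grid.

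The first step is to establish, for each lattice point $x \in L_{n-\overline{m}}\mathbb{Z}^d \cap [-L_{n+2}^2, L_{n+2}^2]^d$ when $\omega \in A_n$, a single-step Kantorovich-Rubinstein estimate of the form
$$D(p_{n-\overline{m},\omega}(x,\cdot), \overline{p}_{n-\overline{m}}(x,\cdot)) \leq C\tilde{\kappa}_{n-\overline{m}} L_{n-\overline{m}}^{-\delta},$$
where $D$ denotes the Wasserstein distance associated to the metric $d_{n-\overline{m}}(y,y') = \abs{(y-y')/L_{n-\overline{m}}}^\beta$. By the supremum form of (\ref{couple_kr}), this reduces to controlling $\abs{R_{n-\overline{m}}f(x) - \overline{R}_{n-\overline{m}}f(x)}$ for every test function $f$ satisfying $\abs{f(y)-f(y')} \leq d_{n-\overline{m}}(y,y')$. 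I would decompose $f = \chi_{n-\overline{m},x}f + (1-\chi_{n-\overline{m},x})f$ after normalizing so that $f(x) = 0$. The localized piece has $\abs{\cdot}_{n-\overline{m}}$-norm bounded by a universal constant, since the support of $\chi_{n-\overline{m},x}$ has diameter of order $L_{n-\overline{m}}$, so Control \ref{Holder} applied at $x$ yields $\abs{S_{n-\overline{m}}(\chi_{n-\overline{m},x}f)(x)} \leq CL_{n-\overline{m}}^{-\delta}$. The tail contribution from $(1-\chi_{n-\overline{m},x})f$ is estimated on the diffusion side by integrating $\abs{(X_{L_{n-\overline{m}}^2}-x)/L_{n-\overline{m}}}^\beta$ against the bound of Control \ref{localization}, and on the Gaussian side by a classical tail estimate for $\overline{p}_{n-\overline{m}}$, both of which produce only a factor of order $\tilde{\kappa}_{n-\overline{m}}$.

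Next I would invoke the infimum form of (\ref{couple_kr}) to obtain, for each relevant pair $(x,\overline{x})$, a coupling $\rho_{x,\overline{x}}$ on $\mathbb{R}^d \times \mathbb{R}^d$ of $p_{n-\overline{m},\omega}(x,\cdot)$ and $\overline{p}_{n-\overline{m}}(\overline{x},\cdot)$ whose expected $d_{n-\overline{m}}$-cost matches the Wasserstein bound. Using the triangle inequality together with the translation invariance of $\overline{p}_{n-\overline{m}}$ (via the deterministic coupling $y \mapsto y + (\overline{x}-x)$),
$$D(p_{n-\overline{m},\omega}(x,\cdot), \overline{p}_{n-\overline{m}}(\overline{x},\cdot)) \leq C\tilde{\kappa}_{n-\overline{m}} L_{n-\overline{m}}^{-\delta} + \bigl|\tfrac{x-\overline{x}}{L_{n-\overline{m}}}\bigr|^\beta.$$
Defining $Q_{n,x}$ to be the law on $(\mathbb{R}^d\times\mathbb{R}^d)^{\mathbb{N}}$ of the joint Markov chain starting from $(x,x)$ with transition kernel $\rho_{\cdot,\cdot}$ gives the correct marginal laws for $X_k$ and $\overline{X}_k$, and at every step
$$E_{Q_{n,x}}\bigl[\abs{X_{k+1}-\overline{X}_{k+1}}^\beta \,\big|\, X_k, \overline{X}_k\bigr] \leq CL_{n-\overline{m}}^{\beta-\delta}\tilde{\kappa}_{n-\overline{m}} + \abs{X_k-\overline{X}_k}^\beta.$$

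Iterating this recurrence yields $E_{Q_{n,x}}[\abs{X_k-\overline{X}_k}^\beta] \leq CkL_{n-\overline{m}}^{\beta-\delta}\tilde{\kappa}_{n-\overline{m}}$ for every $k \leq K := 2(L_{n+2}/L_{n-\overline{m}})^2$. A crude union bound then gives $E_{Q_{n,x}}[\max_{k \leq K}\abs{X_k-\overline{X}_k}^\beta] \leq CK^2 L_{n-\overline{m}}^{\beta-\delta}\tilde{\kappa}_{n-\overline{m}}$, from which (\ref{couple_main_0}) follows by Chebyshev's inequality applied to $\max_{k \leq K}\abs{X_k-\overline{X}_k}^\beta$. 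The main obstacle is the single-step Kantorovich-Rubinstein estimate, because Control \ref{Holder} is only guaranteed at lattice base points of $L_{n-\overline{m}}\mathbb{Z}^d$, whereas the coupling must be constructed from every current position the diffusion might reach. I would address this by rounding each current position to its nearest lattice point and using the Lipschitz dependence of $p_{n-\overline{m},\omega}(x,\cdot)$ on $x$, ensured by (\ref{bounded}), (\ref{Lipschitz}) and (\ref{elliptic}), to absorb the interpolation error into the existing bound. Ensuring that the process remains within $[-L_{n+2}^2, L_{n+2}^2]^d$ over the relevant time horizon is a secondary concern, handled with overwhelming probability by iterating Control \ref{localization} along the discrete grid.
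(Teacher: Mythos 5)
Your overall strategy — converting Control \ref{Holder} into a single-step Wasserstein bound via the Kantorovich--Rubinstein duality for the metric $d_{n-\overline{m}}$, gluing the resulting near-optimal one-step couplings into a joint Markov chain on $(\mathbb{R}^d\times\mathbb{R}^d)^{\mathbb{N}}$, iterating the additive recurrence for $E[d_{n-\overline{m}}(X_k,\overline{X}_k)]$, and then applying a union bound followed by Chebyshev — is exactly the approach the paper adopts (the paper omits the proof, pointing to \cite[Proposition~5.1]{F3} and \cite[Proposition~3.1]{SZ}, but its preparatory discussion around (\ref{couple_approx_100})--(\ref{couple_goal}) and (\ref{couple_kr}) lays out the same route). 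The bookkeeping checks out: iterating the one-step bound over $k \leq K = 2(L_{n+2}/L_{n-\overline{m}})^2$ steps and summing gives $\sum_{k\leq K}k \sim K^2$, which produces the factor $(L_{n+2}/L_{n-\overline{m}})^4$, and the Chebyshev step converts the expected $\beta$-moment into the $(L_{n-\overline{m}}/\gamma)^\beta$ prefactor. Your decomposition $f=\chi_{n-\overline{m},x}f+(1-\chi_{n-\overline{m},x})f$ (after normalizing $f(x)=0$) is the right way to make the localized piece have $|\cdot|_{n-\overline{m}}$-norm of order one, so Control \ref{Holder} applies; the $\tilde{\kappa}_{n-\overline{m}}$ factor does indeed come from the tail estimate via Control \ref{localization}, since the localization scale is $D_{n-\overline{m}}=L_{n-\overline{m}}\kappa_{n-\overline{m}}$ rather than $L_{n-\overline{m}}$.

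One thing you flag as "the main obstacle" is not actually an obstacle, and the remedy you propose for it would not work as stated. You worry that Control \ref{Holder} is available only at the lattice points $x\in L_{n-\overline{m}}\mathbb{Z}^d\cap[-L_{n+2}^2,L_{n+2}^2]^d$, and propose to round the current position to the nearest lattice point and absorb the discrepancy using Lipschitz dependence of $p_{n-\overline{m},\omega}(\cdot,\cdot)$ in its first argument. But the control is already available everywhere one needs it: the cutoff $\chi_{n-\overline{m},x}$ defined in (\ref{cutoff1}) equals $1$ on $B_{30\sqrt{d}L_{n-\overline{m}}}(x)$, so the inequality $|\chi_{n-\overline{m},x}S_{n-\overline{m}}f|_{n-\overline{m}}\leq L_{n-\overline{m}}^{-\delta}|f|_{n-\overline{m}}$ directly yields $|S_{n-\overline{m}}f(y)|\leq L_{n-\overline{m}}^{-\delta}|f|_{n-\overline{m}}$ for \emph{every} $y$ within distance $30\sqrt{d}L_{n-\overline{m}}$ of $x$, and since the lattice spacing is $L_{n-\overline{m}}$ these balls cover the whole box. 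In contrast, the rounding argument as you describe it is not sound: the density $p_{t,\omega}(x,y)$ at $t=L_{n-\overline{m}}^2$ has gradient in $x$ of order $L_{n-\overline{m}}^{-1}$ times the density itself, so displacing $x$ by the lattice spacing $L_{n-\overline{m}}$ changes $p$ by an $O(1)$ multiplicative factor, not by an amount small enough to be absorbed into the $C\tilde{\kappa}_{n-\overline{m}}L_{n-\overline{m}}^{-\delta}$ error. Fortunately this has no bearing on the rest of your proof, which is correct once one recognizes that the control is already pointwise on the whole relevant region.

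Two smaller remarks: the existence of a \emph{measurable} selection of near-optimal one-step couplings $(x,\overline{x})\mapsto\rho_{x,\overline{x}}$ is needed to legitimately define the joint Markov chain and should at least be acknowledged; and the ``good event'' argument ensuring the chain stays inside $[-L_{n+2}^2,L_{n+2}^2]^d$ for the $K$ steps needed (so that the single-step Wasserstein bound remains applicable along the whole trajectory) deserves to be made explicit, though as you say the escape probability is exponentially subdominant by Control \ref{localization}.
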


The following Corollary follows immediately by choosing $\gamma=L_{n-\overline{m}}$ in Proposition \ref{couple_main}.  The corresponding exponent is a consequence of the definition of $\overline{m}$ in (\ref{prob_m}), which implies  $$(1+a)^{\overline{m}+2}-1\leq \frac{(1+a)^3}{2-10a-(1+a)^2}-1\leq \frac{8a}{2}=4a,$$ and therefore, using the definition of $L_n$ in (\ref{L}), for $C>0$ independent of $n$, $$(\frac{L_{n+2}}{L_{n-\overline{m}}})^4\leq C L_{n-\overline{m}}^{16a}.$$  Notice that definitions (\ref{Holderexponent}) and (\ref{delta}) imply the exponent $$16a-\delta<0$$ is negative.

\begin{cor}\label{couple_cor}  Assume (\ref{steady}) and (\ref{constants}).  For every $n\geq \overline{m}$, $\omega\in A_n$ and $x\in[-\frac{1}{2}L_{n+2}^2, \frac{1}{2}L_{n+2}^2]^d$, for $C>0$ independent of $n$, $$Q_{n,x}(\abs{X_k-\overline{X}_k}\geq L_{n-\overline{m}}\;|\;\textrm{for some}\;0\leq k\leq 2(\frac{L_{n+2}}{L_{n-\overline{m}}})^2)\leq C\tilde{\kappa}_{n-\overline{m}}L_{n-\overline{m}}^{16a-\delta}.$$\end{cor}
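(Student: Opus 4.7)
The plan is to read this off directly from Proposition \ref{couple_main} by specializing the free parameter $\gamma$, so the argument is essentially a substitution followed by a ratio estimate on the scales $L_k$.

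First, I would set $\gamma = L_{n-\overline{m}}$ in the bound (\ref{couple_main_0}). With this choice the factor $(L_{n-\overline{m}}/\gamma)^\beta$ collapses to $1$, leaving
\[
Q_{n,x}\Bigl(\abs{X_k-\overline{X}_k}\geq L_{n-\overline{m}}\;\text{for some}\;0\leq k\leq 2(L_{n+2}/L_{n-\overline{m}})^2\Bigr)\leq C\Bigl(\tfrac{L_{n+2}}{L_{n-\overline{m}}}\Bigr)^{4}\tilde{\kappa}_{n-\overline{m}}\,L_{n-\overline{m}}^{-\delta},
\]
valid for every $n\geq\overline{m}$, $\omega\in A_n$, and $x\in[-\tfrac12 L_{n+2}^2,\tfrac12 L_{n+2}^2]^d$, with $C$ independent of $n$.

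Next, I would convert the ratio $L_{n+2}/L_{n-\overline{m}}$ into a power of $L_{n-\overline{m}}$ using the inductive definition in (\ref{L}). Since $\tfrac12 L_k^{1+a}\leq L_{k+1}\leq 2 L_k^{1+a}$ for $L_0$ large, iterating $\overline{m}+2$ times gives $L_{n+2}\leq C L_{n-\overline{m}}^{(1+a)^{\overline{m}+2}}$ for some $C>0$ independent of $n$. Therefore $(L_{n+2}/L_{n-\overline{m}})^4\leq C L_{n-\overline{m}}^{4((1+a)^{\overline{m}+2}-1)}$, and it suffices to show that $(1+a)^{\overline{m}+2}-1\leq 4a$. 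This is exactly the bound the excerpt records: the choice of $\overline{m}$ in (\ref{prob_m}) ensures
\[
(1+a)^{\overline{m}+2}-1\leq \frac{(1+a)^3}{2-10a-(1+a)^2}-1\leq \frac{8a}{2}=4a,
\]
so $(L_{n+2}/L_{n-\overline{m}})^4\leq C L_{n-\overline{m}}^{16a}$.

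Substituting this into the displayed bound gives the stated estimate
\[
Q_{n,x}\Bigl(\abs{X_k-\overline{X}_k}\geq L_{n-\overline{m}}\;\text{for some}\;0\leq k\leq 2(L_{n+2}/L_{n-\overline{m}})^2\Bigr)\leq C\tilde{\kappa}_{n-\overline{m}}\,L_{n-\overline{m}}^{16a-\delta},
\]
and the claim is proved. There is no real obstacle here: Proposition \ref{couple_main} carries all the probabilistic content, and the only thing to verify is the purely arithmetic comparison of scales, which is built into the definition (\ref{prob_m}) of $\overline{m}$. The fact that the resulting exponent is negative (useful for later applications) follows from $16a<\delta$, which is guaranteed by (\ref{Holderexponent}) and (\ref{delta}).
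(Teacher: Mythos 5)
Your proof is correct and follows essentially the same route the paper takes: specialize $\gamma=L_{n-\overline{m}}$ in Proposition \ref{couple_main} so the factor $(L_{n-\overline{m}}/\gamma)^\beta$ drops out, then control $(L_{n+2}/L_{n-\overline{m}})^4$ by $CL_{n-\overline{m}}^{16a}$ using the definition (\ref{prob_m}) of $\overline{m}$ and the growth estimates for $L_n$ from (\ref{L}). The paper records precisely this computation, including the same chain $(1+a)^{\overline{m}+2}-1\leq 4a$ and the concluding observation that $16a-\delta<0$ by (\ref{Holderexponent}) and (\ref{delta}).
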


\section{Tail Estimates and an Upper Bound in Expectation for the Exit Time}\label{section_exit_time}

The purpose of this section is to obtain certain tail estimates for the exit time in probability.  Namely, whenever the scale $\epsilon$ satisfies $L_n\leq \frac{1}{\epsilon}<L_{n+1}$, the diffusion associated to the generator $$\frac{1}{2}\sum_{i,j=1}^d a_{ij}(x,\omega)\frac{\partial^2}{\partial x_i\partial x_j}+\sum_{i=1}^db_i(x,\omega)\frac{\partial}{\partial x_i}$$ is shown to exit the rescaled domain $U/\epsilon$ prior to time $L_{n+2}^2$ in overwhelming fashion.  The corresponding estimate is then propagated inductively forward in time.  Observe, however, that these estimates remain far from the ultimate goal, since the exit time of a Brownian motion from the rescaled domain $U/\epsilon$ is expected to be of order $\frac{1}{\epsilon^2}$ which, as $n$ approaches infinity, is much smaller than $L_{n+2}^2$.  Therefore, Proposition \ref{exit_main} alone does not imply the boundedness to solutions to the rescaled equation (\ref{intro_eq_res}), and this will not be achieved until Theorem \ref{main_main} of Section \ref{section_main}.

The essential elements in the following proof are Control \ref{Holder} and the boundedness of the domain.  The latter allows for the exit time from $U/\epsilon$ to be bounded above by the exit time from $B_{R/\epsilon}$ for a sufficiently large radius.  And, the former ensures that, whenever $L_n\leq \frac{1}{\epsilon}<L_{n+1}$, on the event $A_n$ defined in (\ref{prob_event_1}), the exit of the random diffusion is comparable with that of a Brownian motion.  The following argument is similar to \cite[Proposition~4.1]{F3}, but the estimate is made more precise in $\epsilon$ and is subsequently iterated inductively.  Notice that the dimension $d\geq 3$ appears in the argument and conclusion, a fact that will later be important in the proof of Theorem \ref{main_main}.

\begin{prop}\label{exit_main}  Assume (\ref{steady}) and (\ref{constants}).  For all $n$ sufficiently large, for every $\omega\in A_n$, for all $\epsilon>0$ satisfying $L_n\leq \frac{1}{\epsilon}<L_{n+1}$, for $C>0$ independent of $n$, $$\sup_{x\in \overline{U}}P_{\frac{x}{\epsilon},\omega}(\tau^\epsilon>L_{n+2}^2)\leq C (\epsilon L_{n+2})^{-3}.$$  And, for each $k\geq 0$, $$\sup_{x\in \overline{U}}P_{\frac{x}{\epsilon},\omega}(\tau^\epsilon>kL_{n+2}^2)\leq C (\epsilon L_{n+2})^{-3k}.$$\end{prop}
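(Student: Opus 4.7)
The plan is to combine Control~\ref{Holder} at scale $L_{n+2}$ with the transience of Brownian motion in dimension $d\geq 3$. Since $\overline{U}$ is bounded, fix $R>0$ with $\overline{U}\subset B_R$ and choose a H\"older cutoff $f\in \C^{0,\beta}(\R^d)$ with $0\leq f\leq 1$, $f\equiv 1$ on $B_R$, and $\Supp(f)\subset B_{2R}$.  Set $g(z):=f(\epsilon z)$, so that the rescaled norm in (\ref{prob_Holder}) satisfies
$$|g|_{n+2}\leq 1+C(\epsilon L_{n+2})^{\beta}.$$
For any $x\in\overline{U}$, since $\{\tau^\epsilon>L_{n+2}^2\}\subseteq\{X_{L_{n+2}^2}\in U/\epsilon\}\subseteq\{X_{L_{n+2}^2}\in B_{R/\epsilon}\}$, the choice of $f$ gives
$$P_{x/\epsilon,\omega}(\tau^\epsilon>L_{n+2}^2)\leq E_{x/\epsilon,\omega}\bigl[g(X_{L_{n+2}^2})\bigr]=R_{n+2}g(x/\epsilon).$$

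Next I would pick a lattice point $x_0\in L_{n+2}\mathds{Z}^d\cap[-L_{n+2}^2,L_{n+2}^2]^d$ with $|x/\epsilon-x_0|\leq\sqrt{d}\,L_{n+2}$; this is possible on $A_n$ for $n$ large because $|x/\epsilon|\leq RL_{n+1}\leq L_{n+2}^2$.  Then $\chi_{n+2,x_0}(x/\epsilon)=1$, and since $\omega\in B_{n+2}(x_0)$, Control~\ref{Holder} yields the pointwise bound
$$|R_{n+2}g(x/\epsilon)-\overline{R}_{n+2}g(x/\epsilon)|\leq L_{n+2}^{-\delta}|g|_{n+2}\leq CL_{n+2}^{-\delta}\bigl(1+(\epsilon L_{n+2})^{\beta}\bigr).$$
A direct heat-kernel estimate, using $\alpha_{n+2}\geq 1/(2\nu)$ from Theorem~\ref{effectivediffusivity}, controls the Brownian side:
$$\overline{R}_{n+2}g(x/\epsilon)\leq W^{n+2}_{x/\epsilon}\bigl(X_{L_{n+2}^2}\in B_{2R/\epsilon}\bigr)\leq (2\pi\alpha_{n+2}L_{n+2}^2)^{-d/2}\,|B_{2R/\epsilon}|\leq C(\epsilon L_{n+2})^{-d}.$$
Summing the two estimates gives, since $d\geq 3$,
$$\sup_{x\in\overline{U}}P_{x/\epsilon,\omega}(\tau^\epsilon>L_{n+2}^2)\leq C(\epsilon L_{n+2})^{-d}+C(\epsilon L_{n+2})^{\beta}L_{n+2}^{-\delta}.$$
The relation $\epsilon L_{n+2}\leq L_n^{2a+a^2}$ (from $L_n\leq 1/\epsilon$ and (\ref{L})), combined with $\delta=5\beta/32$ and $a\leq\beta/(1000d)$, makes the first term dominate for large $n$, producing the desired $C(\epsilon L_{n+2})^{-3}$.

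The iterated estimate follows by the Markov property.  Conditionally on $\{\tau^\epsilon>(k-1)L_{n+2}^2\}$, the position $X_{(k-1)L_{n+2}^2}$ lies in $U/\epsilon$, so the single-step bound applied to the shifted process gives
$$P_{x/\epsilon,\omega}(\tau^\epsilon>kL_{n+2}^2)\leq \bigl(\sup_{y\in\overline{U}}P_{y/\epsilon,\omega}(\tau^\epsilon>L_{n+2}^2)\bigr)^k\leq C^k(\epsilon L_{n+2})^{-3k},$$
which matches the stated bound once the $k$-dependence of the constant is absorbed by taking $n$ large enough that $C(\epsilon L_{n+2})^{-3}\leq 1$.

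The main obstacle is the competition between the two terms in the single-step bound: the Hölder-comparison error carries a growth factor $(\epsilon L_{n+2})^{\beta}$, while only the heat-kernel term decays, and at the transient rate $(\epsilon L_{n+2})^{-d}$.  The argument therefore relies crucially on $d\geq 3$ together with the smallness of $a$ relative to $\delta$ enforced by (\ref{Holderexponent}) and (\ref{delta}); without these the Control error would swamp the Brownian bound and no polynomial decay in $\epsilon L_{n+2}$ could be extracted.
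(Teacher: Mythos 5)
Your proposal follows the same route as the paper's own proof: bound $P_{x/\epsilon,\omega}(\tau^\epsilon>L_{n+2}^2)$ by the expectation of a cutoff function evaluated at time $L_{n+2}^2$, compare the parabolic solution against its $\alpha_{n+2}$-Brownian counterpart via Control~\ref{Holder}, bound the Brownian term by the transient heat-kernel estimate $C(\epsilon L_{n+2})^{-d}$, and use the smallness of $a$ relative to $\delta$ to absorb the H\"older-comparison error into $C(\epsilon L_{n+2})^{-3}$; the iterated bound is then the Markov property. The only (cosmetic) departures are that you use a $\C^{0,\beta}$ cutoff in place of the paper's Lipschitz cutoff $\chi_{B_R}$, you track the rescaled H\"older norm as $1+C(\epsilon L_{n+2})^\beta$ rather than the cruder $C(1+L_{n+2}/L_n)$, and you are slightly more explicit about selecting a lattice point $x_0\in L_{n+2}\mathds{Z}^d$ to invoke the pointwise version of Control~\ref{Holder}; none of these changes the substance, and your handling of the $C^k$ factor in the iterated bound is exactly the (slightly informal) treatment the paper itself uses.
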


\begin{proof}  Using the boundedness of the domain, choose $R\geq 1$ such that $\overline{U}\subset B_R$ and choose $n_1\geq 0$ so that, whenever $n\geq n_1$, \begin{equation}\label{exit_main_1}L_{n+1}\overline{U}\subset L_{n+1}B_R\subset[-\frac{1}{2}L_{n+2}^2, \frac{1}{2}L_{n+2}^2]^d.\end{equation} Henceforth, fix $n\geq n_1$, $\omega\in A_n$ and $L_n\leq \frac{1}{\epsilon}<L_{n+1}$.

Fix a smooth cutoff function satisfying $0\leq \chi_{B_R}\leq 1$ with $$\chi_{B_{R}}(x)=\left\{\begin{array}{ll} 1 & \textrm{if}\;\;x\in \overline{B}_R, \\ 0 & \textrm{if}\;\;x\in \mathbb{R}^d\setminus B_{R+1},\end{array}\right.$$ and observe that, for a constant $C>0$ independent of $\epsilon>0$, whenever $L_n\leq \frac{1}{\epsilon}<L_{n+1}$, \begin{equation}\label{exit_main_2} \abs{\chi_{B_R}(\epsilon x)}_{n+2}\leq C(1+\frac{L_{n+2}}{L_n})\leq CL_n^{2a+a^2}.\end{equation}  Because the solutions $$\left\{\begin{array}{ll} v^\epsilon_t=\frac{1}{2}\tr(A(x,\omega)D^2v^\epsilon)+b(x,\omega)\cdot Dv^\epsilon & \textrm{on}\;\;\mathbb{R}^d\times(0,\infty), \\ v^\epsilon=\chi_{B_R}(\epsilon x) & \textrm{on}\;\;\mathbb{R}^d\times\left\{0\right\},\end{array}\right.$$ admit the representation $$v^\epsilon(x,t)=E_{x,\omega}(\chi_{B_R}(\epsilon X_t))\geq P_{x,\omega}(\epsilon X_t\in B_R)\geq P_{x,\omega}(\epsilon X_t \in U),$$ it follows that \begin{equation}\label{exit_main_4}1-v^\epsilon(x,t)\leq P_{x,\omega}(\epsilon X_t\notin U)\leq P_{x,\omega}(\tau^\epsilon\leq t).\end{equation}

The solutions $v^\epsilon$ are now compared using Control \ref{Holder} with the solution $$\left\{\begin{array}{ll} \overline{v}^\epsilon_t=\frac{\alpha_{n+2}}{2}\Delta \overline{v}^\epsilon & \textrm{on}\;\;\mathbb{R}^d\times(0,\infty), \\ \overline{v}^\epsilon=\chi_{B_R}(\epsilon x) & \textrm{on}\;\;\mathbb{R}^d\times\left\{0\right\}.\end{array}\right.$$  Since $\omega\in A_n$ and (\ref{exit_main_1}) guarantee that Control \ref{Holder} is available for every $x\in U/\epsilon$, assumptions (\ref{Holderexponent}) and (\ref{L}) and line (\ref{exit_main_2}) imply that, for $C>0$ independent of $n$, \begin{equation}\label{exit_main_3} \sup_{x\in \overline{U}}\abs{v^\epsilon(x,L_{n+2}^2)-\overline{v}^\epsilon(x,L_{n+2}^2)}\leq CL_{n+2}^{-\delta}L_n^{2a+a^2}\leq CL_n^{2a+a^2-\delta(1+a)^2}\leq CL_n^{3a-\delta}.\end{equation}

To conclude with the first statement, the size of $\overline{v}^\epsilon(x,L_{n+2}^2)$ is bounded using Theorem \ref{effectivediffusivity} and the heat kernel.  For each $x\in \overline{U}/\epsilon$, since $R\geq 1$, for $C>0$ independent of $n$, \begin{equation}\label{exit_main_7}\overline{v}^\epsilon(x,L_{n+2}^2)\leq \int_{B_{\frac{4R}{\epsilon}}(x)}(4\pi \alpha_{n+2}L_{n+2}^2)^{-\frac{d}{2}}\exp(-\frac{\abs{y-x}^2}{4\alpha_{n+2} L_{n+2}^2})\;dy\leq C(\epsilon L_{n+2})^{-d}.\end{equation}  And, in view of (\ref{exit_main_3}), for each $x\in \overline{U}/\epsilon$, for $C>0$ independent of $n$, \begin{equation}\label{exit_main_5} 1-v^\epsilon(x,L_{n+2}^2)\geq 1-\overline{v}^\epsilon(x,L_{n+2}^2)-\abs{v^\epsilon(x,L_{n+2}^2)-\overline{v}^\epsilon(x,L_{n+2}^2)}\geq 1-C(\epsilon L_{n+2})^{-d}-CL_n^{3a-\delta}.\end{equation}  Therefore, since the definitions (\ref{Holderexponent}) and (\ref{delta}) and $L_n\leq\frac{1}{\epsilon}<L_{n+1}$ imply that, for $C>0$ independent of $n$, $$L_n^{3a-\delta}\leq C(\epsilon L_{n+2})^{-3},$$ it follows using inequality (\ref{exit_main_4}) that, since $d\geq 3$ and $x\in \overline{U}/\epsilon$ was arbitrary, for $C>0$ independent of $n$, \begin{equation}\label{exit_main_6} \sup_{x\in \overline{U}}P_{\frac{x}{\epsilon},\omega}(\tau^\epsilon>L_{n+2}^2)\leq C(\epsilon L_{n+2})^{-d}+CL_n^{3a-\delta}\leq C(\epsilon L_{n+2})^{-3},\end{equation} which completes the argument for the first statement.  

The final statement is a consequence of the Markov property and induction.  The case $k=0$ is immediate, and the case $k=1$ is (\ref{exit_main_6}).  For the inductive step, assume that, for $k\geq 1$, $$\sup_{x\in \overline{U}}P_{\frac{x}{\epsilon},\omega}(\tau^\epsilon>kL_{n+2}^2)\leq C(\epsilon L_{n+2})^{-3k}.$$  Then by the Markov property, for each $x\in \overline{U}$, $$P_{\frac{x}{\epsilon},\omega}(\tau^\epsilon>(k+1)L_{n+2}^2)=E_{\frac{x}{\epsilon},\omega}(P_{X_{kL_{n+2}^2},\omega}(\tau^\epsilon>L_{n+2}^2), \tau^\epsilon>kL_{n+2}^2).$$  Therefore, from the inductive hypothesis and (\ref{exit_main_6}), for each $x\in \overline{U}$, $$P_{\frac{x}{\epsilon},\omega}(\tau^\epsilon>(k+1)L_{n+2}^2)\leq (\sup_{x\in \overline{U}}P_{\frac{x}{\epsilon},\omega}(\tau^\epsilon>L_{n+2}^2))P_{x,\omega}(\tau^\epsilon>kL_{n+2}^2)\leq C(\epsilon L_{n+2})^{-3(k+1)},$$ which completes the argument. \end{proof}

\section{Estimates for the Exit Time of Brownian Motion Near the Boundary}\label{Brownian_exit}

In this section estimates are obtained, in expectation and near the boundary of the domain, for the exit time of Brownian motion.  These estimates are shown in Section \ref{section_main} to be inherited with high probability by the diffusion in random environment using the coupling developed in Section \ref{section_coupling}.  The exterior ball condition plays its most essential role in this section, which states that for a now fixed $r_0>0$, for every $x\in\partial U$, there exists $x^*\in\mathbb{R}^d$ satisfying \begin{equation}\label{disc_ball} \overline{B}_{r_0}(x^*)\cap\overline{U}=\left\{x\right\}.\end{equation}  Observe that the following material is very similar to \cite[Proposition~6.1]{F3} and \cite[Corollary~6.1, 6.2]{F3}, though the presentation has been condensed, and is included for the reader's convenience and because it illustrates the primary function of the exterior ball condition.

Define, for each $\delta>0$, the enlargement \begin{equation}\label{disc_inflate} U_{\delta}=\left\{\;x\in\mathbb{R}^d\;|\;d(x,U)< \delta\;\right\},\end{equation} and notice, as a consequence of (\ref{disc_ball}), for every $0<\delta<r_0$, \begin{equation}\label{disc_inflate_ball} U_{\delta}\;\;\textrm{satisfies the exterior ball condition with radius}\;(r_0-\delta).\end{equation}  To begin, Proposition \ref{disc_annulus} and \ref{disc_u} analyze the behavior of Brownian motion in the original domains $U$ and $U_\delta$, and in Corollary \ref{disc_u_scale} the statements are rescaled to obtain estimates for the dilated domains $U/\epsilon$ and $U_\delta/\epsilon$.

The exit time of Brownian motion will first be understood in annular regions about origin defined, for each pair of radii $0<r_1<r_2<\infty$, by $$A_{r_1, r_2}=B_{r_2}\setminus\overline{B}_{r_1}.$$  Let $\tau_{r_1,r_2}$ denote the $\C([0,\infty);\mathbb{R}^d)$ exit time $$\tau_{r_1,r_2}=\inf\left\{\;t\geq 0\;|\;X_t\notin A_{r_1,r_2}\;\right\},$$ and recall that, in expectation and with respect to the Wiener measure $W^{n}_x$, the function $$u^n_{r_1,r_2}(x)=E^{W^n_x}(\tau_{r_1,r_2})\;\;\textrm{on}\;\;\overline{A}_{r_1,r_2}$$ satisfies the equation \begin{equation}\label{disc_annulus_exit}\left\{\begin{array}{ll} 1+\frac{\alpha_n}{2}\Delta u^n_{r_1,r_2}=0 & \textrm{on}\;\; A_{r_1,r_2}, \\ u^n_{r_1,r_2}=0 & \textrm{on}\;\;\partial A_{r_1,r_2}.\end{array}\right.\end{equation}  See, for example, \cite[Exercise~9.12]{Oksendal}.  An upper bound for these solutions is now effectively obtained in a neighborhood of $\partial B_{r_1}$ which necessarily depends upon the pair $(r_1,r_2)$.   However, in the application to follow, the exterior ball conditions (\ref{disc_ball}) and (\ref{disc_inflate_ball}) will allow the radii to be fixed independently of $n\geq 0$.

\begin{prop}\label{disc_annulus}  Assume (\ref{steady}) and (\ref{constants}).  For each pair of radii $0<r_1<r_2<\infty$, for each $n\geq 0$, there exists $C=C(r_1,r_2)>0$ such that $$u^n_{r_1,r_2}(x)\leq C\d(x,\partial B_{r_1})\;\;\textrm{on}\;\;\overline{A}_{r_1,r_2}.$$\end{prop}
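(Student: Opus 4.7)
Since the annulus $A_{r_1,r_2}$ and equation $(\ref{disc_annulus_exit})$ are both rotationally invariant, uniqueness of the Dirichlet problem forces $u^n_{r_1,r_2}$ to be radial: $u^n_{r_1,r_2}(x) = v_n(\abs{x})$ for a unique $v_n\in \C^\infty([r_1,r_2])$ satisfying
$$
v_n''(r) + \frac{d-1}{r}\, v_n'(r) = -\frac{2}{\alpha_n}, \qquad v_n(r_1)=v_n(r_2)=0.
$$
The plan is to reduce to this ODE, solve it explicitly, and read off a Lipschitz estimate at $r=r_1$ that is uniform in $n$.

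Since $d\geq 3$, the homogeneous equation has fundamental system $\{1,\, r^{2-d}\}$ (as seen by multiplying by $r^{d-1}$), and $r\mapsto -r^2/(d\alpha_n)$ is a particular solution. Hence
$$
v_n(r) \;=\; \frac{1}{d\,\alpha_n}\,V(r), \qquad V(r):= -r^2 + A + B\,r^{2-d},
$$
where the constants $A$ and $B$ are determined by the linear system $-r_i^2 + A + B\, r_i^{2-d}=0$ for $i=1,2$. The critical observation is that $A$ and $B$ depend only on $r_1, r_2, d$, so all of the $n$-dependence of $v_n$ has been isolated in the prefactor $1/\alpha_n$.

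Next I would upgrade this to a uniform Lipschitz bound. The function $V\in \C^\infty([r_1,r_2])$ is fixed once $r_1, r_2, d$ are chosen, with $V(r_1)=V(r_2)=0$, and Theorem \ref{effectivediffusivity} yields $\alpha_n^{-1}\leq 2\nu$ uniformly in $n\geq 0$. Thus, for every $r\in[r_1,r_2]$,
$$
v_n(r) \;=\; \frac{1}{d\,\alpha_n}\int_{r_1}^{r} V'(s)\,ds \;\leq\; \frac{2\nu}{d}\,\norm{V'}_{L^\infty([r_1,r_2])}\,(r-r_1).
$$
Setting $C := \tfrac{2\nu}{d}\norm{V'}_{L^\infty([r_1,r_2])}$---a constant depending only on $r_1, r_2$, since $d$ and $\nu$ are fixed throughout the paper via $(\ref{steady})$---and observing that $\d(x,\partial B_{r_1}) = \abs{x} - r_1$ on $\overline{A}_{r_1,r_2}$ completes the argument.

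There is no substantive obstacle here: the statement reduces to a one-dimensional ODE computation. The only subtlety worth attention is the factorization of the $n$-dependence through $\alpha_n^{-1}$, without which $C$ would appear to drift with $n$; the uniform two-sided bound $\tfrac{1}{2\nu}\leq \alpha_n \leq 2\nu$ of Theorem \ref{effectivediffusivity} is precisely what prevents this. The same formula, rescaled, will drive the application in $U/\epsilon$ via Corollary \ref{couple_cor}.
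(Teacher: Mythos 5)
Your proof is correct and takes essentially the same approach as the paper: both write down the explicit radial solution of the annulus ODE and extract a Lipschitz bound at $r=r_1$, invoking the uniform bound $\tfrac{1}{2\nu}\leq\alpha_n\leq 2\nu$ from Theorem \ref{effectivediffusivity} to make the constant independent of $n$. Your factorization $v_n=\tfrac{1}{d\alpha_n}V$ with $V$ independent of $n$ is a slightly tidier way to isolate the $n$-dependence than the paper's Taylor expansion of the explicit formula about $r_1$, but the arguments are substantively identical.
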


\begin{proof}  Fix $0<r_1<r_2<\infty$ and $n\geq 0$.  The solution $u^n_{r_1,r_2}$ of (\ref{disc_annulus_exit}) admits the explicit radial description, owing to $d\geq 3$, and writing $r=\abs{x}$, $$u(x)=u(r)=c_1(r_1,r_2)+c_2(r_1,r_2)r^{2-d}-\frac{r^2}{2d\alpha_n}\;\;\textrm{on}\;\;A_{r_1,r_2},$$ for $$c_1(r_1,r_2)=\frac{1}{2d\alpha_n}\cdot \frac{r_1^2r_2^{2-d}-r_2^2r_1^{2-d}}{r_2^{2-d}-r_1^{2-d}}\;\;\textrm{and}\;\;c_2(r_1,r_2)=\frac{1}{2d\alpha_n}\cdot\frac{r_2^2-r_1^2}{r_2^{2-d}-r_1^{2-d}}.$$  After performing a Taylor expansion in $r$ about $r_1$ and using the fact that $u(r_1)=0$, for each $x\in A_{r_1,r_2}$, $$u^n_{r_1,r_2}(x)=u^n_{r_1,r_2}(r)=c_2(2-d)r_1^{1-d}(r-r_1)+c_2(2-d)(1-d)\int_{r_1}^rs^{-d}(r-s)\;ds-(\frac{2r_1r+r^2}{2d\alpha_n}).$$  Since the integrand is bounded by $r_1^{-d}(r_2-r_1)$, and because the final term is negative, the uniform control of $\alpha_n$ provided by Theorem \ref{effectivediffusivity} guarantees the existence of $C=C(r_1,r_2)>0$ satisfying $$u^n_{r_1,r_2}(x)=u^n_{r_1,r_2}(r)\leq C(r-r_1)=C\d(x,\partial B_{r_1})\;\;\textrm{on}\;\;\overline{A}_{r_1,r_2},$$ and completes the argument.\end{proof}

The comparison principle will now imply that the estimates obtained on $A_{r_1,r_2}$ induce similar estimates near the boundary of the domains $U$ and its inflations $U_\delta$, whenever $\delta>0$ is sufficiently small.  Define the translated annuli, for each $x\in\mathbb{R}^d$ and pair $(r_1,r_2)$, $$A_{r_1,r_2}(x)=x+A_{r_1,r_2}=B_{r_2}(x)\setminus\overline{B}_{r_1}(x).$$  And, for each $\delta>0$, define the $\C([0,\infty);\mathbb{R}^d)$ exit times \begin{equation}\label{disc_stopping} \tau=\inf\left\{\;t\geq 0\;|\;X_t\notin U\;\right\}\;\;\textrm{and}\;\;\tau^\delta=\inf\left\{\; t\geq 0\;|\;X_t\notin U_{\delta}\;\right\}.\end{equation}  The following corollary of Proposition \ref{disc_annulus} controls the expectation of $\tau$ and $\tau^\delta$ in what is essentially the $\delta$-neighborhood of the respective boundaries of $U$ and $U_\delta$.  The radius $r_0$ defined in (\ref{disc_ball}) appears in the argument to quantify the exterior ball condition.

\begin{cor}\label{disc_u} Assume (\ref{steady}) and (\ref{constants}).  For every $0<\delta<\frac{r_0}{2}$, for every $n\geq 0$, for $C>0$ independent of $n$ and $\delta$, $$\sup_{\d(x,\partial U)\leq \delta} E^{W^n_{x}}(\tau)\leq C\delta\;\;\textrm{and}\;\;\sup_{\d(x,\partial U_\delta)\leq 2\delta}E^{W^n_x}(\tau^\delta)<C\delta.$$\end{cor}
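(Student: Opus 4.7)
The plan is to enclose the domain $U$ inside a spherical annulus whose inner sphere realizes the exterior ball condition at a boundary point near $x$, and then invoke the domain monotonicity of expected exit times together with the boundary decay estimate furnished by Proposition \ref{disc_annulus}.

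For the first estimate, I would fix $x \in U$ with $d(x,\partial U)\leq \delta$ (the inequality being trivial for $x\notin U$, where $\tau=0$) and select $y\in\partial U$ realizing $|x-y|=d(x,\partial U)$. The exterior ball condition (\ref{disc_ball}) furnishes $y^*\in\mathbb{R}^d$ satisfying $\overline{B}_{r_0}(y^*)\cap\overline{U}=\{y\}$, so that $U\subset\mathbb{R}^d\setminus\overline{B}_{r_0}(y^*)$. Fixing any $R>\diam(U)+r_0$, which depends only on $U$, one has $U\subset A_{r_0,R}(y^*)$. Monotonicity of the exit time in the underlying domain gives $E^{W^n_x}(\tau)\leq E^{W^n_x}(\tau_{A_{r_0,R}(y^*)})$, and a translation of Proposition \ref{disc_annulus} then yields
$$E^{W^n_x}(\tau)\leq C(r_0,R)\,d(x-y^*,\partial B_{r_0})=C(r_0,R)(|x-y^*|-r_0).$$
Since $|x-y^*|\leq|x-y|+|y-y^*|\leq \delta+r_0$, the right-hand side is bounded by $C\delta$. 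Uniformity in $n$ is inherited from the uniform two-sided bound $\frac{1}{2\nu}\leq \alpha_n\leq 2\nu$ of Theorem \ref{effectivediffusivity}, which is the only $n$-dependent quantity entering the constant $C(r_0,R)$ in Proposition \ref{disc_annulus}.

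The second estimate proceeds identically with $U$ replaced by $U_\delta$. By (\ref{disc_inflate_ball}), $U_\delta$ enjoys the exterior ball condition with radius $r_0-\delta\geq r_0/2$, and $\diam(U_\delta)\leq\diam(U)+2\delta\leq\diam(U)+r_0$, so the enclosing outer radius $R'$ can again be chosen independently of $\delta<r_0/2$. For $x\in U_\delta$ with $d(x,\partial U_\delta)\leq 2\delta$, the same comparison argument, together with the bound $|x-y^*|\leq 2\delta+(r_0-\delta)$ for an appropriate nearest point $y\in\partial U_\delta$ and its exterior center $y^*$, gives $E^{W^n_x}(\tau^\delta)\leq C(r_0/2, R')\cdot 2\delta\leq C\delta$ after absorbing the factor of two.

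The main subtlety, and really the only step requiring care, is verifying that every parameter entering Proposition \ref{disc_annulus}, namely the inner radius, the outer radius, and the effective diffusivity, can be chosen uniformly in $\delta$ and $n$. The exterior ball condition supplies a uniform positive lower bound ($r_0$ or $r_0/2$) on the inner radius; the boundedness of $U$ controls the outer radius; and Theorem \ref{effectivediffusivity} controls $\alpha_n$. Together these ensure the resulting constant $C$ is independent of both $\delta$ and $n$, as required.
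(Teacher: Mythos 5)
Your proposal is correct and follows essentially the same approach as the paper: enclose the domain in an annulus whose inner sphere realizes the exterior ball condition at a nearby boundary point, invoke domain monotonicity (equivalently, the comparison principle) to dominate the exit time by the annular exit time, and apply Proposition \ref{disc_annulus} with constants made uniform by Theorem \ref{effectivediffusivity} and the confinement $r_1\in[r_0/2,r_0]$, $r_2$ fixed. The only cosmetic difference is that the paper proves the $U_\delta$ estimate first and deduces the $U$ estimate from it via $\tau\leq\tau^\delta$, whereas you prove both directly; this changes nothing of substance.
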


\begin{proof}  For each $0<\delta<\frac{r_0}{2}$ observation (\ref{disc_inflate_ball}) implies that $U_{\delta}$ satisfies the exterior ball condition with radius $r_0-\delta$.  Fix $r_2>\frac{r_0}{2}$ such that, whenever $x\in\partial U_\delta$ and $x^*\in\mathbb{R}^d$ satisfy $$\overline{B}_{r_0-\delta}(x^*)\cap\overline{U}_\delta=\left\{x\right\},\;\;\textrm{it follows that}\;\;\overline{U}_\delta\subset B_{r_2}(x^*).$$  The existence of $r_2$ chosen uniformly for $0<\delta<\frac{r_0}{2}$ is guaranteed by the boundedness of $U$.  Since, for each $x\in U$, the stopping time $\tau_{\delta}$ almost-surely bounds $\tau$ with respect to $W^n_x$, the second statement for the inflated domains $U_\delta$ implies the statement for $U$.

Consider $0<\delta<\frac{r_0}{2}$ and $n\geq 0$ and, in Proposition \ref{disc_annulus}, choose $r_1=r_0-\delta$.  The choice of $\delta>0$ guarantees $\frac{r_0}{2}\leq r_1\leq r_0$, and the pair $(r_1,r_2)$ guarantees, for every $x\in\partial U_\delta$ and $x^*\in\mathbb{R}^d$ satisfying \begin{equation}\label{disc_u_1}\overline{B}_{r_0-\delta}(x^*)\cap\overline{U}_\delta=\left\{x\right\},\;\textrm{the containment}\;\;\overline{U}_\delta\subset \overline{A}_{r_1,r_2}(x^*).\end{equation}

Suppose $x\in U_\delta$ satisfies $\d(x, \partial U_\delta)\leq 2\delta$ and choose using compactness $\overline{x}\in\partial U_\delta$ with $\abs{x-\overline{x}}=\d(x, \partial U_\delta)$.  Let $\overline{x}^*$ satisfy (\ref{disc_u_1}) with $\overline{x}$, and let $u^{n,\overline{x}}_{r_1,r_2}$ denote the solution \begin{equation}\label{disc_u_3}\left\{\begin{array}{ll} 1+\frac{\alpha_n}{2}\Delta u^{n,\overline{x}}_{r_1,r_2}=0 & \textrm{on}\;\;A_{r_1,r_2}(\overline{x}^*), \\ u^{n,\overline{x}}_{r_1,r_2}=0 & \textrm{on}\;\;\partial A_{r_1,r_2}(\overline{x}^*).\end{array}\right.\end{equation}  Translational invariance and Proposition \ref{disc_annulus} imply, since $r_1$ and $r_2$ are bounded above and away from zero uniformly in $0<\delta<\frac{r_0}{2}$ and $n\geq0$, for $C>0$ independent of $n$ and $\delta$, \begin{equation}\label{disc_u_2}u^{n,\overline{x}}_{r_1,r_2}(x)\leq C\d(x,\partial A_{r_1,r_2}(\overline{x}^*))\leq C\abs{x-\overline{x}}=C\d(x, \partial U_\delta)\leq C\delta.\end{equation}

Finally, the expectation $E^{W^n_x}(\tau^\delta)$ on $\overline{U}_\delta$ satisfies $$\left\{\begin{array}{ll} 1+\frac{\alpha_n}{2}\Delta E^{W^n_x}(\tau^\delta)=0 & \textrm{on}\;\; U_\delta,\\ E^{W^n_x}(\tau^\delta)=0 & \textrm{on}\;\;\partial U_\delta,\end{array}\right.$$ see \cite[Exercise~9.12]{Oksendal}, and using the containment (\ref{disc_u_1}), the non-negativity of $u^{n,\overline{x}}_{r_1,r_2}$ on $\overline{A}_{r_1,r_2}$ and (\ref{disc_u_3}), $$\left\{\begin{array}{ll} 1+\frac{\alpha_n}{2}\Delta u^{n,\overline{x}}_{r_1,r_2}=0 & \textrm{on}\;\; U_\delta,\\ u^{n,\overline{x}}_{r_1,r_2}\geq0 & \textrm{on}\;\;\partial U_\delta.\end{array}\right.$$  The comparison principle and (\ref{disc_u_2}) therefore imply, for $C>0$ independent of $n$ and $\delta$, $$E^{W^n_x}(\tau^\delta)\leq u^{n,\overline{x}}_{r_1,r_2}(x)\leq C\delta,$$ and complete the proof.\end{proof}

The analogous estimates on the domains $U/\epsilon$ and $U_\delta/\epsilon$ now follow immediately by rescaling.   For each $\epsilon>0$, define the $\C([0,\infty);\mathbb{R}^d)$ exit time \begin{equation}\label{disc_epsilon_exit}\tau^\epsilon=\inf\left\{\;t\geq 0\;|\;X_t\notin U/\epsilon\right\}=\inf\left\{\;t\geq 0\;|\;\epsilon X_t\notin U\right\},\end{equation}  whose expectation $E^{W^n_x}(\tau^\epsilon)$ can be obtained as the rescaling $$E^{W^n_x}(\tau^\epsilon)=\epsilon^{-2}E^{W^n_{\epsilon x}}(\tau)\;\;\textrm{on}\;\;\overline{U}/\epsilon.$$  And, for each $\epsilon>0$ and $\delta>0$, the exit time $$\tau^{\epsilon,\delta}=\inf\left\{\;t\geq 0\;|\;X_t\notin U_\delta/\epsilon\;\right\}=\inf\left\{\;t\geq 0\;|\;\epsilon X_t\notin U_\delta\;\right\},$$ has expectation equal to the rescaling $$E^{W^n_x}(\tau^{\epsilon,\delta})=\epsilon^{-2}E^{W^n_{\epsilon x}}(\tau^\delta)\;\;\textrm{on}\;\;\overline{U}_\delta/\epsilon.$$  These two equalities and Corollary \ref{disc_u} then immediately imply the following.

\begin{cor}\label{disc_u_scale}  Assume (\ref{steady}) and (\ref{constants}).  For every $\epsilon>0$, $0<\delta<\frac{r_0}{2\epsilon}$ and $n\geq 0$, for $C>0$ independent of $\epsilon$, $\delta$ and $n$, $$\sup_{\d(x,\partial U/\epsilon)\leq \delta} E^{W^n_{x}}(\tau^\epsilon)\leq C\epsilon^{-1}\delta\;\;\textrm{and}\;\;\sup_{\d(x,\partial U_{\delta}/\epsilon)\leq 2\delta}E^{W^n_x}(\tau^{\epsilon, \delta})<C\epsilon^{-1}\delta.$$\end{cor}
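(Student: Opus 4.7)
The plan is to deduce both estimates by rescaling Corollary \ref{disc_u} via the two identities
$$E^{W^n_x}(\tau^\epsilon)=\epsilon^{-2}E^{W^n_{\epsilon x}}(\tau)\quad\textrm{and}\quad E^{W^n_x}(\tau^{\epsilon,\delta})=\epsilon^{-2}E^{W^n_{\epsilon x}}(\tau^\delta)$$
displayed immediately before the statement. The key geometric observation is that for any bounded $V\subset\R^d$, the map $y\mapsto\epsilon y$ sends the $\gamma$-neighborhood of $\partial V/\epsilon$ onto the $\epsilon\gamma$-neighborhood of $\partial V$, so distances to the boundary scale by $\epsilon$.

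For the first estimate, given $x$ with $\d(x,\partial U/\epsilon)\leq\delta$, the point $\epsilon x$ satisfies $\d(\epsilon x,\partial U)\leq\epsilon\delta$. The hypothesis $\delta<r_0/(2\epsilon)$ guarantees $\epsilon\delta<r_0/2$, so the first statement of Corollary \ref{disc_u}, applied with inflation parameter $\epsilon\delta$ in place of $\delta$, gives $E^{W^n_{\epsilon x}}(\tau)\leq C\epsilon\delta$. Combined with the first scaling identity, this yields $E^{W^n_x}(\tau^\epsilon)\leq C\epsilon^{-1}\delta$.

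For the second estimate the analogous calculation using only the sup bound of Corollary \ref{disc_u} is not sharp enough, because the translated point $\epsilon x$ lies within distance $2\epsilon\delta$ of $\partial U_\delta$ (not merely $2\delta$), and the sup bound $\leq C\delta$ does not capture this extra factor of $\epsilon$. The needed refinement is, however, already contained in the proof of Corollary \ref{disc_u}: the Taylor expansion of Proposition \ref{disc_annulus} together with the comparison principle establishes the pointwise bound $E^{W^n_z}(\tau^\delta)\leq C\,\d(z,\partial U_\delta)$ on $U_\delta$. Evaluating at $z=\epsilon x$ gives $E^{W^n_{\epsilon x}}(\tau^\delta)\leq 2C\epsilon\delta$, and the second scaling identity yields $E^{W^n_x}(\tau^{\epsilon,\delta})\leq 2C\epsilon^{-1}\delta$.

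The only delicate point is thus the appeal to the pointwise form of the estimate in Corollary \ref{disc_u}, which is strictly stronger than what is stated. I would either note that this sharper version is extracted verbatim from the proof of Corollary \ref{disc_u}, or reproduce in a line the comparison of $E^{W^n_z}(\tau^\delta)$ against the annular solution $u^{n,\overline{x}}_{r_1,r_2}$ at the exterior ball touching $\partial U_\delta$ nearest to $z=\epsilon x$. Beyond this observation no new idea is required, and this is presumably why the excerpt declares that the corollary follows immediately from the two scaling identities and Corollary \ref{disc_u}.
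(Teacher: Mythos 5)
Your argument for the first estimate is exactly what the paper intends: apply Corollary \ref{disc_u} to the point $\epsilon x$ at inflation parameter $\epsilon\delta<r_0/2$, and rescale by $\epsilon^{-2}$.

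For the second estimate you have uncovered a genuine notational slip in the paper rather than a hidden step. The displayed scaling identity $E^{W^n_x}(\tau^{\epsilon,\delta})=\epsilon^{-2}E^{W^n_{\epsilon x}}(\tau^\delta)$, read with the definition $\tau^{\epsilon,\delta}=\inf\{t\geq 0\,:\,X_t\notin U_\delta/\epsilon\}$, is not what is actually applied: in the proof of Proposition \ref{main_Brownian} the author writes that ``the stopping time $\tau^{\epsilon,\delta}$ is the exit time from the $\delta$-neighborhood of $(U/\epsilon)$,'' and that neighborhood is $U_{\epsilon\delta}/\epsilon$, not $U_\delta/\epsilon$ (under $y\mapsto\epsilon y$, the set $\{y\,:\,\d(y,U/\epsilon)<\delta\}$ is carried onto $U_{\epsilon\delta}$). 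The scaling identity therefore should read $E^{W^n_x}(\tau^{\epsilon,\delta})=\epsilon^{-2}E^{W^n_{\epsilon x}}(\tau^{\epsilon\delta})$. With this correction the second bound is exactly parallel to the first: $\d(x,\partial(U_{\epsilon\delta}/\epsilon))\leq 2\delta$ gives $\d(\epsilon x,\partial U_{\epsilon\delta})\leq 2\epsilon\delta$, and since $\epsilon\delta<r_0/2$, the second half of Corollary \ref{disc_u} applied at inflation parameter $\epsilon\delta$ yields $E^{W^n_{\epsilon x}}(\tau^{\epsilon\delta})\leq C\epsilon\delta$, hence $E^{W^n_x}(\tau^{\epsilon,\delta})\leq C\epsilon^{-1}\delta$. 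No pointwise refinement of Corollary \ref{disc_u} is needed, which is the sense in which the paper asserts the estimates ``immediately'' follow.

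Your workaround via the pointwise bound $E^{W^n_z}(\tau^\delta)\leq C\,\d(z,\partial U_\delta)$ is a correct reading of the proof of Corollary \ref{disc_u}, but it does not actually close the argument over the stated parameter range: that pointwise estimate, like Corollary \ref{disc_u} itself, requires $\delta<r_0/2$ so that $U_\delta$ inherits the exterior ball condition via (\ref{disc_inflate_ball}). The present hypothesis is only $\delta<r_0/(2\epsilon)$, which for small $\epsilon$ permits $\delta$ far larger than $r_0$, and then $U_\delta$ no longer carries the geometry underpinning the annular comparison in Proposition \ref{disc_annulus}. So the discrepancy you spotted is real, but the fix is to correct the scaling identity (so that Corollary \ref{disc_u} is always invoked at parameter $\epsilon\delta$), not to sharpen Corollary \ref{disc_u}.
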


\section{The Discrete Approximation and Proof of Homogenization}\label{section_main}

In this section, stochastic homogenization is established for solutions \begin{equation}\label{main_eq}\left\{\begin{array}{ll} \frac{1}{2}\tr(A(\frac{x}{\epsilon},\omega)D^2u^\epsilon)+\frac{1}{\epsilon}b(\frac{x}{\epsilon},\omega)\cdot Du^\epsilon=g(x) & \textrm{on}\;\;U, \\ u^\epsilon=f(x) & \textrm{on}\;\;\partial U,\end{array}\right.\end{equation} which are, on a subset of full probability, shown to converge uniformly on $\overline{U}$ as $\epsilon\rightarrow 0$ to the solution \begin{equation}\label{main_eq_hom}\left\{\begin{array}{ll} \frac{\overline{\alpha}}{2}\Delta \overline{u}=g(x) & \textrm{on}\;\;U, \\ \overline{u}=f(x) & \textrm{on}\;\;\partial U.\end{array}\right.\end{equation}  The result will be obtained by analyzing the lifetime of the diffusion process associated to the generator \begin{equation}\label{end_gen}\frac{1}{2}\sum_{i,j=1}^da_{ij}(x,\omega)\frac{\partial^2}{\partial x_i \partial x_j}+\sum_{i=1}^db_i(x,\omega)\frac{\partial}{\partial x_i}\end{equation} in the large domains $U/\epsilon$.

The discrete coupling developed in Section \ref{section_coupling} will play an essential role in the proof, and suggests the introduction of a discretely stopped version of the diffusion.  Namely, whenever the scale satisfies $L_n\leq \frac{1}{\epsilon}<L_{n+1}$, a discrete version of the process with time steps $L_{n-\overline{m}}^2$ will be considered, and stopped as soon as it hits the $\tilde{D}_{n-\overline{m}}$ neighborhood of the compliment of the dilated domain $U/\epsilon$.

Note carefully, however, that this type of discrete approximation does not generally provide an accurate description of processes associated to generators like (\ref{intro_gen}), since a continuous diffusion beginning in the $\tilde{D}_{n-\overline{m}}$ neighborhood of the boundary may be compelled by the drift to exit the domain in a region far removed from the stopping point of its discrete proxy.  A fact which can readily be seen by considering a nonzero, constant drift, and which is a situation that can occur within this framework with a rapidly vanishing but nonzero probability on all scales.  In essence, therefore, Propositions \ref{main_Brownian} and \ref{main_time} of this section effectively establish a boundary barrier for equation (\ref{main_eq}) of a quality which is generically impossible to obtain.

The discrete $\C([0,\infty);\mathbb{R}^d)$ stopping time is defined, for each $\epsilon>0$, and for each $n\geq \overline{m}$, by \begin{equation}\label{end_tau1} \tau^{\epsilon,n}_1=\inf\left\{\;kL_{n-\overline{m}}^2\geq 0\;|\;\d(X_{kL_{n-\overline{m}}^2},(U/\epsilon)^c)\leq \tilde{D}_{n-\overline{m}}\;\right\},\end{equation} and represents the first time $X_{kL_{n-\overline{m}}^2}$ enters the $\tilde{D}_{n-\overline{m}}$ neighborhood of the compliment of $U/\epsilon$.  Since it is not true that $\tau^{\epsilon,n}\leq \tau^\epsilon$ for every path $X_t$, the failure of this inequality will need to be controlled in probability with respect to $P_{x,\omega}$ by the exponential localization estimate implied by Control \ref{localization}.

Similarly, for each $\epsilon>0$, and for each $n\geq \overline{m}$, define the $\C([0,\infty);\mathbb{R}^d)$ stopping times \begin{equation}\label{end_tau2} \tau^{\epsilon,n}_2=\inf\left\{\;kL_{n-\overline{m}}^2\geq 0\;|\;\d(X_{kL_{n-\overline{m}}^2},(U/\epsilon))\geq \tilde{D}_{n-\overline{m}}\;\right\}.\end{equation}  These stopping times quantify the first time that the discrete process $X_{kL_{n-\overline{m}}^2}$ exits the $\tilde{D}_{n-\overline{m}}$ neighborhood of $(U/\epsilon)$.  The definitions imply $\tau^{\epsilon,n}_1\leq \tau^{\epsilon,n}_2$ and, whenever $\tau^{\epsilon,n}_1\leq \tau^\epsilon$, it is immediate that $\tau^{\epsilon,n}_1\leq \tau^\epsilon\leq \tau^{\epsilon,n}_2.$

Proposition \ref{main_Brownian} will use Corollary \ref{disc_u_scale} to obtain an effective tail estimate with respect to the Wiener measure $W^n_x$ for $\tau^{\epsilon,n}_2$ near the boundary of $U/\epsilon$.  This estimate, together with the coupling constructed in Proposition \ref{couple_main}, then yield on the event $A_n$ an upper bound for the probability $$P_{x,\omega}(\tau^\epsilon-\tau_1^{\epsilon,n}\geq L_{n-1}^2)\;\;\textrm{for}\;\;x\in\overline{U}/\epsilon.$$  It is this estimate that effectively acts as a barrier by ensuring that, with high probability and following an application of the exponential estimate implied by Control \ref{localization}, a diffusion beginning in the $\tilde{D}_{n-\overline{m}}$ neighborhood of the compliment $(U/\epsilon)^c$ exits the true domain $U/\epsilon$ in a small neighborhood of its starting position when compared with the scaling in $\epsilon$.

In what follows, recall that $\overline{m}$ is the smallest integer satisfying \begin{equation}\label{main_mbar}\overline{m}>1-\frac{\log(1-12a-a^2)}{\log(1+a)},\end{equation} which ensures that, by the choice of constants $L_n$ in (\ref{L}) and $\tilde{D}_n$ in (\ref{D}), for $C>0$ independent of $n\geq\overline{m}$, \begin{equation}\label{main_zeta_1}L_{n+1}\tilde{D}_{n-\overline{m}}\leq CL_{n-1}^{2-10a}.\end{equation}  Further, observe by using the definitions of $L_n$ in (\ref{L}) and $\tilde{\kappa}_n$ in (\ref{kappa}) that there exists $C>0$ independent of $n\geq\overline{m}$ satisfying \begin{equation}\label{main_zeta_2}\tilde{\kappa}_{n-\overline{m}}L_{n-\overline{m}}^{16a-\delta}\leq CL_{n-1}^{-10a}.\end{equation}  The following proposition is the control of the second discrete exit time, in terms of Brownian motion and near the boundary of $U/\epsilon$.  The proof is a refinement of the estimate obtained in \cite[Proposition~7.1]{F3} and follows from Chebyshev's inequality and standard exponential estimates for Brownian motion.

\begin{prop}\label{main_Brownian}  Assume (\ref{steady}) and (\ref{constants}).  For all $n\geq 0$ sufficiently large, for every $\epsilon>0$ satisfying $L_n\leq\frac{1}{\epsilon}< L_{n+1}$, for $C>0$ independent of $n$, $$\sup_{\d(x,(U/\epsilon)^c)\leq 2\tilde{D}_{n-\overline{m}}}W^{n-\overline{m}}_x(\tau_2^{\epsilon,n}\geq L_{n-1}^2)\leq CL_{n-1}^{-10a}.$$\end{prop}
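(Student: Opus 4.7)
The plan is to apply Chebyshev's inequality to the continuous exit time from a slightly enlarged domain, and then to absorb the discretization error using a standard Gaussian maximal inequality. Introduce
$$V'_\epsilon=\{y\in\R^d:\d(y,U/\epsilon)<2\tilde{D}_{n-\overline{m}}\}\quad\textrm{and}\quad\tilde{\tau}=\inf\{t\geq 0:X_t\notin V'_\epsilon\}.$$
If $x\notin V'_\epsilon$ then $\d(X_0,U/\epsilon)\geq 2\tilde{D}_{n-\overline{m}}>\tilde{D}_{n-\overline{m}}$, so $\tau_2^{\epsilon,n}=0$ deterministically and the event has probability zero; thus I may henceforth assume $x\in V'_\epsilon$. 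A short geometric check combining the hypothesis $\d(x,(U/\epsilon)^c)\leq 2\tilde{D}_{n-\overline{m}}$ with the definition of $V'_\epsilon$ produces $\d(x,\partial V'_\epsilon)\leq 4\tilde{D}_{n-\overline{m}}$. Corollary \ref{disc_u_scale} applied with scaled inflation $2\tilde{D}_{n-\overline{m}}$---whose smallness condition $\epsilon\tilde{D}_{n-\overline{m}}<r_0/4$ holds for $n$ large by (\ref{L}) and (\ref{kappa})---together with $\epsilon^{-1}<L_{n+1}$ and (\ref{main_zeta_1}) then yields
$$E^{W^{n-\overline{m}}_x}(\tilde{\tau})\leq C\epsilon^{-1}\tilde{D}_{n-\overline{m}}\leq CL_{n+1}\tilde{D}_{n-\overline{m}}\leq CL_{n-1}^{2-10a}.$$

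Next I split
$$\{\tau_2^{\epsilon,n}\geq L_{n-1}^2\}\subseteq\{\tilde{\tau}\geq L_{n-1}^2/2\}\cup\bigl(\{\tilde{\tau}<L_{n-1}^2/2\}\cap\{\tau_2^{\epsilon,n}\geq L_{n-1}^2\}\bigr).$$
Chebyshev's inequality and the previous display bound the first event by $2E^{W^{n-\overline{m}}_x}(\tilde{\tau})/L_{n-1}^2\leq CL_{n-1}^{-10a}$, which is already the target rate. On the second event, the path has just exited $V'_\epsilon$ so $\d(X_{\tilde{\tau}},U/\epsilon)=2\tilde{D}_{n-\overline{m}}$, yet at the first subsequent discrete time $k^*L_{n-\overline{m}}^2\in[\tilde{\tau},\tilde{\tau}+L_{n-\overline{m}}^2)$ the hypothesis $\tau_2^{\epsilon,n}\geq L_{n-1}^2$ forces $\d(X_{k^*L_{n-\overline{m}}^2},U/\epsilon)<\tilde{D}_{n-\overline{m}}$, and hence a Brownian displacement of at least $\tilde{D}_{n-\overline{m}}$ over a time interval of length at most $L_{n-\overline{m}}^2$. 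The strong Markov property at $\tilde{\tau}$ combined with the standard Gaussian maximal estimate then bound this probability by
$$C\exp\!\bigl(-c\tilde{D}_{n-\overline{m}}^2/(\alpha_{n-\overline{m}}L_{n-\overline{m}}^2)\bigr)=C\exp\!\bigl(-c\tilde{\kappa}_{n-\overline{m}}^2/\alpha_{n-\overline{m}}\bigr),$$
which by (\ref{kappa}) and Theorem \ref{effectivediffusivity} decays faster than any polynomial in $L_{n-1}$ and is therefore absorbed into $CL_{n-1}^{-10a}$.

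The principal point is the geometric observation that inflating by only $\tilde{D}_{n-\overline{m}}$ would be insufficient: a Brownian path that continuously exits such an inflation could return to the discrete stopping set with non-negligible probability before the next discrete time. The double inflation to $2\tilde{D}_{n-\overline{m}}$ creates a safety margin of width $\tilde{D}_{n-\overline{m}}$, which is precisely the scale at which a Brownian increment over time $L_{n-\overline{m}}^2$ is super-polynomially small, since $\tilde{D}_{n-\overline{m}}/L_{n-\overline{m}}=\tilde{\kappa}_{n-\overline{m}}$ grows faster than any power of $\log L_{n-\overline{m}}$. With this in place, the rest is a mechanical use of Corollary \ref{disc_u_scale} together with (\ref{main_zeta_1}).
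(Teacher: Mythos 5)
Your proof is correct and takes essentially the same route as the paper's: apply Corollary \ref{disc_u_scale} to the exit time from the doubly inflated domain (your $\tilde{\tau}$ is the paper's $\tau^{\epsilon,2\tilde{D}_{n-\overline{m}}}$), bound it with Chebyshev, and control the residual event via the one-step Gaussian maximal estimate over a time interval of length $L_{n-\overline{m}}^2$. Your decomposition of $\{\tau_2^{\epsilon,n}\geq L_{n-1}^2\}$ is equivalent to the paper's union bound, and your explicit treatment of $x\notin V'_\epsilon$ and the geometric check $\d(x,\partial V'_\epsilon)\leq 4\tilde{D}_{n-\overline{m}}$ (which implicitly uses the exterior ball condition) are details the paper elides.
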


\begin{proof}  Fix $n_1\geq 0$ such that, whenever $n\geq n_1$, for $r_0$ from the exterior ball condition (\ref{disc_ball}), \begin{equation}\label{main_Brownian_1} 2\tilde{D}_{n-\overline{m}}\leq \frac{r_0L_n}{2}.\end{equation}  And, therefore, whenever $n\geq n_1$ and $\d(x,(U/\epsilon)^c)\leq 2\tilde{D}_{n-\overline{m}}$, the conditions of Proposition \ref{disc_u_scale} are satisfied.

Fix $n\geq n_1$, $\epsilon>0$ satisfying $L_n\leq \frac{1}{\epsilon}<L_{n+1}$ and $x\in \mathbb{R}^d$ such that $d(x,(U/\epsilon)^c)\leq2\tilde{D}_{n-\overline{m}}$.  The stopping time $\tau^{\epsilon,\delta}$ is the exit time from the $\delta$-neighborhood of $(U/\epsilon)$, and for $\delta=2\tilde{D}_{n-\overline{m}}$ Proposition \ref{disc_u_scale} states, for $C>0$ independent of $n$, $$E^{W^n_x}(\tau^{\epsilon,2\tilde{D}_{n-\overline{m}}})\leq C(2\tilde{D}_{n-\overline{m}})\epsilon^{-1}< C\tilde{D}_{n-\overline{m}}L_{n+1}.$$  So, using observation (\ref{main_zeta_1}), for $C>0$ independent of $n$, $$E^{W^n_x}(\tau^{\epsilon,2\tilde{D}_{n-\overline{m}}})\leq CL_{n-1}^{2-10a}.$$  And therefore, by Chebyshev's inequality, for $C>0$ independent of $n$, \begin{equation}\label{main_Brownian_2} W^n_x(\tau^{\epsilon,2\tilde{D}_{n-\overline{m}}}\geq \frac{1}{2}L_{n-1}^2)\leq CL_{n-1}^{-10a}.\end{equation}

The argument if finished by applying the translational invariance of the heat kernel, the Markov property, and standard exponential tail estimates for Brownian motion on scale $\tilde{D}_{n-\overline{m}}$, see Revuz and Yor \cite[Chapter~2, Proposition~1.8]{RY}.  It follows from exponential estimates and the definitions (\ref{L}), (\ref{kappa}) and (\ref{D}) that, for $C>0$ and $c>0$ independent of $n$, \begin{equation}\label{main_Brownian_3}W^n_x(\tau^{\epsilon,2\tilde{D}_{n-\overline{m}}}+L_{n-\overline{m}}^2\leq \tau^{\epsilon,n}_2)\leq W_0^n(X^*_{L_{n-\overline{m}}^2}\geq \tilde{D}_{n-\overline{m}})\leq C\exp(-c\tilde{\kappa}_{n-\overline{m}}^2).\end{equation}  And, since (\ref{L}), (\ref{kappa}) and (\ref{D}) guarantee the existence of $C>0$ independent of $n\geq\overline{m}$ satisfying $$\exp(-c\tilde{\kappa}_{n-\overline{m}}^2)\leq CL_{n-1}^{-10a},$$ and since $L_{n-\overline{m}}^2<\frac{1}{2}L_{n-1}^2$ for all $n$ sufficiently large, the combination (\ref{main_Brownian_2}) and (\ref{main_Brownian_3}) imply $$W^n_x(\tau^{\epsilon,n}_2\geq L_{n-1}^2)\leq CL_{n-1}^{-10a},$$ which, because $d(x,(U/\epsilon)^c)\leq2\tilde{D}_{n-\overline{m}}$, $L_n\leq \frac{1}{\epsilon}<L_{n+1}$ and $n\geq n_1$ were arbitrary, completes the argument.  \end{proof}

Before proceeding, recall the events $A_n$ defined, for each $n\geq 0$, as \begin{multline}\label{main_event}A_n=\left\{\;\omega\in\Omega\;|\;\omega\in B_m(x)\;\;\textrm{for all}\;\;x\in L_m\mathbb{Z}^d\cap[-L_{n+2}^2, L_{n+2}^2]^d\;\;\textrm{and}\right. \\ \left. \textrm{for all}\;\;n-\overline{m}\leq m\leq n+2.\right\},\end{multline} and which guarantee in particular the localization estimate implied by Control \ref{main_localization} for every environment $\omega\in A_n$, point $x\in[-L_{n+2}^2, L_{n+2}^2]^d$ and scale $L_{n-\overline{m}}$ to $L_{n+2}$.

\begin{con}\label{main_localization}  Fix $x\in\mathbb{R}^d$, $\omega\in\Omega$ and $n\geq 0$.  For each $v\geq D_n$, for all $\abs{y-x}\leq 30\sqrt{d}L_n$, $$P_{y,\omega}(X^*_{L_n^2}\geq v)\leq \exp(-\frac{v}{D_n}).$$\end{con}

The following establishes, on the event $A_n$ with respect to $P_{x,\omega}$, a comparison between the continuous exit time $\tau^\epsilon$ and discrete stopping time $\tau^{\epsilon,n}_1$.   The estimate will be achieved on scales $\epsilon$ satisfying $L_n\leq \frac{1}{\epsilon}<L_{n+1}$ for all $n$ sufficiently large.  The presentation is an improvement of \cite[Proposition~7.3]{F3}, and the proof is a consequence of the global coupling from Corollary \ref{couple_cor} and the estimates for Brownian motion from Proposition \ref{main_Brownian}.  As before, notice that the dimension $d\geq 3$ appears in the conclusion due to its reliance upon Proposition \ref{exit_main}.

\begin{prop}\label{main_time}  Assume (\ref{steady}) and (\ref{constants}).  For each $n\geq \overline{m}$ sufficiently large, for every $\epsilon>0$ satisfying $L_n\leq\frac{1}{\epsilon}<L_{n+1}$ and for every $\omega\in A_n$, for $C>0$ independent of $n$, $$\sup_{x\in \overline{U}/\epsilon}P_{x,\omega}(\tau^\epsilon-\tau^{\epsilon,n}_1\geq L_{n-1}^2)\leq CL_{n-1}^{-10a}+C(\epsilon L_{n+2})^{-3}.$$\end{prop}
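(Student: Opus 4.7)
The plan is to decompose the event $\{\tau^\epsilon-\tau_1^{\epsilon,n}\geq L_{n-1}^2\}$ according to whether $\tau^\epsilon$ exceeds $L_{n+2}^2$, treating the large exit times by the tail estimate of Proposition \ref{exit_main} and the moderate exit times by combining the strong Markov property at $\tau_1^{\epsilon,n}$ with the coupling of Corollary \ref{couple_cor} and the Brownian estimate of Proposition \ref{main_Brownian}. Concretely I would begin from
$$P_{x,\omega}(\tau^\epsilon-\tau_1^{\epsilon,n}\geq L_{n-1}^2)\leq P_{x,\omega}(\tau^\epsilon>L_{n+2}^2)+P_{x,\omega}\bigl(\tau_1^{\epsilon,n}\leq L_{n+2}^2-L_{n-1}^2,\,\tau^\epsilon-\tau_1^{\epsilon,n}\geq L_{n-1}^2\bigr),$$
and dispose of the first summand by Proposition \ref{exit_main} to obtain the contribution $C(\epsilon L_{n+2})^{-3}$.

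For the second summand I would apply the strong Markov property at $\tau_1^{\epsilon,n}$, which on the conditioning event is a finite, bounded stopping time of the filtration $\{\mathcal{F}_{kL_{n-\overline{m}}^2}\}_k$. Since $\tau^\epsilon>\tau_1^{\epsilon,n}$ there, $X_{\tau_1^{\epsilon,n}}$ lies in $\overline{U}/\epsilon$ and satisfies $d(X_{\tau_1^{\epsilon,n}},(U/\epsilon)^c)\leq\tilde{D}_{n-\overline{m}}$ by the definition of $\tau_1^{\epsilon,n}$; moreover, the boundedness of $U$ together with the scaling $L_n\leq 1/\epsilon<L_{n+1}$ places any such admissible $y$ inside $[-\tfrac{1}{2}L_{n+2}^2,\tfrac{1}{2}L_{n+2}^2]^d$ for $n$ large, so that Corollary \ref{couple_cor} is available when started from $y$. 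The Markov reduction replaces the problem with the estimate of $P_{y,\omega}(\tau^\epsilon\geq L_{n-1}^2)$ uniformly over such $y$.

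To bound this supremum I would use the inclusion $\{\tau^\epsilon\geq L_{n-1}^2\}\subseteq\{X_{kL_{n-\overline{m}}^2}\in U/\epsilon\text{ for all }0\leq k\leq K\}$ with $K=\lfloor L_{n-1}^2/L_{n-\overline{m}}^2\rfloor$, and then pass to the coupling measure $Q_{n,y}$, whose $X$-marginal agrees with the discrete skeleton under $P_{y,\omega}$. On the good coupling event $G=\{|X_k-\overline{X}_k|<L_{n-\overline{m}}\text{ for all }k\leq 2(L_{n+2}/L_{n-\overline{m}})^2\}$ the strict inequality $L_{n-\overline{m}}<\tilde{D}_{n-\overline{m}}$ forces $d(\overline{X}_k,U/\epsilon)<\tilde{D}_{n-\overline{m}}$ whenever $X_k\in U/\epsilon$, so if $X_k\in U/\epsilon$ for every $k\leq K$ then $\tau_2^{\epsilon,n}$ for the Brownian motion $\overline{X}$ exceeds $L_{n-1}^2$. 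Combining Corollary \ref{couple_cor} (together with the bound $\tilde{\kappa}_{n-\overline{m}}L_{n-\overline{m}}^{16a-\delta}\leq CL_{n-1}^{-10a}$ recorded in (\ref{main_zeta_2})) with Proposition \ref{main_Brownian} then yields $Q_{n,y}(G^c)+W_y^{n-\overline{m}}(\tau_2^{\epsilon,n}>L_{n-1}^2)\leq CL_{n-1}^{-10a}$, and assembling the three pieces gives the claim. The main obstacle I anticipate is keeping the strong Markov step transparent in the presence of the random, discrete-valued starting point $X_{\tau_1^{\epsilon,n}}$; the initial split via Proposition \ref{exit_main} plays the essential role of confining us to a time horizon on which the coupling of Corollary \ref{couple_cor} still supplies useful control.
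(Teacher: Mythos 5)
Your proposal is correct, and it reaches the same bound using the same three ingredients — Proposition \ref{exit_main} for the tail of $\tau^\epsilon$, the coupling of Corollary \ref{couple_cor}, and the Brownian boundary estimate of Proposition \ref{main_Brownian} — but the order of operations differs noticeably from the paper's. You begin by splitting on $\{\tau^\epsilon>L_{n+2}^2\}$ under $P_{x,\omega}$, dispose of that event immediately, and only then invoke the strong Markov property at $\tau_1^{\epsilon,n}$ for the continuous-time diffusion, reducing to a supremum of $P_{y,\omega}(\tau^\epsilon\geq L_{n-1}^2)$ over near-boundary points $y$; you then instantiate a fresh coupling measure $Q_{n,y}$ from $y$ and compare. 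The paper instead works from the start under a single coupling measure $Q_{n,x}$: it passes to the discrete exit time $\tilde{\tau}^\epsilon$, decomposes on the good/bad coupling event $C_n$, peels off the case $T_1^{\epsilon,n}>(L_{n+2}/L_{n-\overline{m}})^2$ via (\ref{exit_main_5}), and only then applies the (discrete) Markov property of the chain $(X_k,\overline{X}_k)$ to land at Proposition \ref{main_Brownian}. In practice both routes need all the same facts, including that $L_{n-\overline{m}}^2$ divides $L_{n-1}^2$ so the discrete skeleton at time $L_{n-1}^2$ is defined, and that $y\in[-\tfrac12 L_{n+2}^2,\tfrac12 L_{n+2}^2]^d$ so the coupling is available from $y$ — both of which you note. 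The paper's version has the small advantage that the Markov step is the elementary one for a discrete chain, and avoids re-instantiating the coupling from a random starting point; yours has the advantage of disposing of the tail event at the outset so that one never has to track the event $T_1^{\epsilon,n}>(L_{n+2}/L_{n-\overline{m}})^2$ inside the coupling. Two cosmetic points: since the event is $\{\tau^\epsilon\geq L_{n-1}^2\}$ and $KL_{n-\overline{m}}^2=L_{n-1}^2$, the inclusion into $\{X_{kL_{n-\overline{m}}^2}\in U/\epsilon\text{ for all }k\leq K\}$ holds up to a boundary case that either has probability zero or is absorbed by passing to $K-1$ and adjusting constants; and the Markov property used on $P_{y,\omega}$ is the continuous strong Markov property, which holds here because $\tau_1^{\epsilon,n}$ is a stopping time of the canonical right-continuous filtration and the coefficients are bounded and Lipschitz.
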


\begin{proof}  Let $n_1\geq 0$ be as in Proposition \ref{main_Brownian}.  Namely, for each $n\geq n_1$, $$2\tilde{D}_{n-\overline{m}}\leq \frac{r_0L_n}{2},$$ which ensures that the assumptions of Proposition \ref{main_Brownian} are satisfied for every $n\geq n_1$.  Furthermore, let $n_2\geq 0$ be such that, whenever $n\geq n_2$, $$L_{n+1}\overline{U}\subset[-\frac{1}{2}L_{n+2}^2, \frac{1}{2}L_{n+2}^2]^d,$$ which ensures, for every $n\geq n_2$ and $L_n\leq \frac{1}{\epsilon}<L_{n+2}$, the containment $\overline{U}/\epsilon\subset[-\frac{1}{2}L_{n+2}^2, \frac{1}{2}L_{n+2}^2]^d$ and therefore, for every $x\in\overline{U}/\epsilon$, the statement of Corollary \ref{couple_cor}.

Fix $n\geq \max(n_1, n_2, \overline{m})$, $\epsilon>0$ satisfying $L_n\leq\frac{1}{\epsilon}<L_{n+1}$, $\omega\in A_n$ and $x\in \overline{U}/\epsilon$.  Recall the measure $Q_{n,x}$ from Proposition \ref{couple_main} which defines the Markov chain $(X_k,\overline{X}_k)$ on $(\mathbb{R}^d\times\mathbb{R}^d)^{\mathbb{N}}$, and which is, in its respective coordinates, a discrete version of the process in random environment and a Brownian motion with variance $\alpha_{n-\overline{m}}$ with time steps $L_{n-\overline{m}}^2$.  Define $C_n$ to be the event $$C_n=(\abs{X_k-\overline{X}_k}\geq L_{n-\overline{m}}\;|\;\textrm{for some}\;0\leq k\leq 2(\frac{L_{n+2}}{L_{n-\overline{m}}})^2\;),$$ where Corollary \ref{couple_cor} and (\ref{main_zeta_2}) imply, for $C>0$ independent of $n$, \begin{equation}\label{main_time_1}Q_{n,x}(C_n)\leq C\tilde{\kappa}_{n-\overline{m}}L_{n-\overline{m}}^{16a-\delta}\leq CL_{n-1}^{-10a}.\end{equation}

Let $\tilde{\tau}^\epsilon$ denote the discrete $\C([0,\infty);\mathbb{R}^d)$ stopping time \begin{equation}\label{main_time_2} \tilde{\tau}^\epsilon=\inf\left\{\;kL_{n-\overline{m}}^2\geq0\;|\;X_{kL_{n-\overline{m}}^2}\notin U/\epsilon\;\right\},\end{equation} and define the analogous stopping times $$T^{\epsilon,n}_1=\inf\left\{\;k\geq 0\;|\;(X_k,\overline{X}_k)\;\textrm{satisfies}\;d(X_k,(U/\epsilon)^c)\leq \tilde{D}_{n-\overline{m}}\;\right\},$$ which is merely $\tau^{\epsilon,n}_1$ defined for the first coordinate of $(X_k, \overline{X}_k)$, and $$\tilde{T}^\epsilon=\inf\left\{\;k\geq 0\;|\;(X_k,\overline{X}_k)\;\textrm{satisfies}\;X_k\notin U\;\right\},$$ which acts as $\tilde{\tau}^\epsilon$ for the first coordinate of $(X_k,\overline{X}_k)$.  It follows from the definitions that $\tau^\epsilon\leq \tilde{\tau}^\epsilon$ and $T^{\epsilon,n}_1\leq \tilde{T}^\epsilon$.

The definition of $Q_{n,x}$ and the Markov property imply that \begin{multline}\label{main_time_6}P_{x,\omega}(\tilde{\tau}^\epsilon-\tau^{\epsilon,n}_1\geq L_{n-1}^2)=Q_{n,x}(\tilde{T}^\epsilon-T^{\epsilon,n}_1\geq (\frac{L_{n-1}}{L_{n-\overline{m}}})^2) \\ =Q_{n,x}(\tilde{T}^\epsilon-T^{\epsilon,n}_1\geq (\frac{L_{n-1}}{L_{n-\overline{m}}})^2,C_n)+Q_{n,x}(\tilde{T}^\epsilon-T^{\epsilon,n}_1\geq (\frac{L_{n-1}}{L_{n-\overline{m}}})^2,C_n^c),\end{multline}  where (\ref{main_time_1}) states that the first term of (\ref{main_time_6}) is bounded, for $C>0$ independent of $n$, by \begin{equation}\label{main_time_3} Q_{n,x}(\tilde{T}^\epsilon-T^{\epsilon,n}_1\geq (\frac{L_{n-1}}{L_{n-\overline{m}}})^2, C_n)\leq Q_{n,x}(C_n)\leq CL_{n-1}^{-10a}.\end{equation}  The second term is further decomposed like \begin{multline}\label{main_time_4} Q_{n,x}(\tilde{T}^\epsilon-T^{\epsilon,n}_1\geq (\frac{L_{n-1}}{L_{n-\overline{m}}})^2, C_n^c)=Q_{n,x}(\tilde{T}^\epsilon-T^{\epsilon,n}_1\geq (\frac{L_{n-1}}{L_{n-\overline{m}}})^2, C_n^c,T^{\epsilon,n}_1> (\frac{L_{n+2}}{L_{n-\overline{m}}})^2) \\ +Q_{n,x}(\tilde{T}^\epsilon-T^{\epsilon,n}_1\geq (\frac{L_{n-1}}{L_{n-\overline{m}}})^2, C_n^c,T^{\epsilon,n}_1\leq (\frac{L_{n+2}}{L_{n-\overline{m}}})^2).\end{multline}

The first term of (\ref{main_time_4}) is bounded using Proposition \ref{exit_main}, and particularly (\ref{exit_main_5}) which applies equally to the discrete sequence since $L_{n-\overline{m}}^2$ divides $L_{n+2}^2$ according to the choice (\ref{L}), to yield, for $C>0$ independent of $n$, \begin{equation}\label{main_time_7}Q_{n,x}(\tilde{T}^\epsilon-T^{\epsilon,n}_1\geq (\frac{L_{n-1}}{L_{n-\overline{m}}})^2,  C_n^c, T^{\epsilon,n}_1> (\frac{L_{n+2}}{L_{n-\overline{m}}})^2)\leq Q_{n,x}(T^{\epsilon,n}_1>(\frac{L_{n+2}}{L_{n-\overline{m}}})^2)\leq C(\epsilon L_{n+2})^{-3}.\end{equation}

To bound the second term of (\ref{main_time_4}), define the discrete stopping time $$\overline{T}^{\epsilon,n}_2=\inf\left\{\;k\geq 0\;|\;(X_k,\overline{X}_k)\;\;\textrm{satisfies}\;\;\d(\overline{X}_k,(U/\epsilon))\geq \tilde{D}_{n-\overline{m}}\right\},$$ which acts as $\tau^{\epsilon,n}_2$ defined for the second coordinate of the process $(X_k,\overline{X}_k)$.  First, on the event $C_n^c$, for every $0\leq k\leq 2(\frac{L_{n+2}}{L_{n-\overline{m}}})^2$, $$\d(X_k,(U/\epsilon)^c)\leq\tilde{D}_{n-\overline{m}}\;\;\textrm{implies}\;\;\d(\overline{X}_k,(U/\epsilon)^c)\leq \tilde{D}_{n-\overline{m}}+L_{n-\overline{m}}\leq 2\tilde{D}_{n-\overline{m}},$$ and $$\d(\overline{X}_k,(U/\epsilon))\geq \tilde{D}_{n-\overline{m}}\;\;\textrm{implies}\;\;\d(X_k,(U/\epsilon))\geq \tilde{D}_{n-\overline{m}}-L_{n-\overline{m}}> 0.$$  Next, on the event $(C_n^c, T^{\epsilon,n}_1\leq (\frac{L_{n-1}}{L_{n-\overline{m}}})^2)$, it follows by definition that $$\d(X_{T^{\epsilon,n}_1},(U/\epsilon)^c)\leq\tilde{D}_{n-\overline{m}}\;\;\textrm{and}\;\;\d(\overline{X}_{\overline{T}^{\epsilon,n}_2},(U/\epsilon))\geq\tilde{D}_{n-\overline{m}}.$$  Therefore, from the Markov property, Proposition \ref{main_Brownian} and the definition of $Q_{n,x}$, for $C>0$ independent of $n$, \begin{multline}\label{main_time_5} Q_{n,x}(\tilde{T}^\epsilon-T^{\epsilon,n}_1\geq (\frac{L_{n-1}}{L_{n-\overline{m}}})^2, C_n^c, T^{\epsilon,n}_1\leq (\frac{L_{n+2}}{L_{n-\overline{m}}})^2)\leq \sup_{\d(x,(U/\epsilon)^c)\leq 2\tilde{D}_{n-\overline{m}}}Q_{n,x}(\overline{T}^{\epsilon,n}_2\geq (\frac{L_{n-1}}{L_{n-\overline{m}}})^2) \\ =\sup_{\d(x,(U/\epsilon)^c)\leq 2\tilde{D}_{n-\overline{m}}}W^{n-\overline{m}}_x(\tau_2^{\epsilon,n}\geq L_{n-1}^2)\leq CL_{n-1}^{-10a}.\end{multline}

Since $\tau^\epsilon\leq \tilde{\tau}^\epsilon$ by definition (\ref{main_time_2}), the collection (\ref{main_time_6}), (\ref{main_time_3}), (\ref{main_time_4}), (\ref{main_time_7}) and (\ref{main_time_5}) assert that, for $C>0$ independent of $n$, $$P_{x,\omega}(\tau^\epsilon-\tau^{\epsilon,n}_1\geq L_{n-1}^2)\leq P_{x,\omega}(\tilde{\tau}^\epsilon-\tau^{\epsilon,n}_1\geq L_{n-1}^2)\leq CL_{n-1}^{-10a}+C(\epsilon L_{n+2})^{-3},$$ which, since $n$ sufficiently large, $\omega\in A_n$, $L_n\leq\frac{1}{\epsilon}<L_{n+1}$ and $x\in \overline{U}/\epsilon$ were arbitrary, completes the argument.\end{proof}

Stochastic homogenization for solutions of (\ref{main_eq}) is now established.  Because the case of zero righthand side and nonzero boundary data was considered in \cite{F3}, by linearity it remains only to prove homogenization for solutions of (\ref{main_eq}) with nonzero righthand side which vanish along the boundary.  Precisely, it will first be shown that solutions \begin{equation}\label{main_eq_1} \left\{\begin{array}{ll} \frac{1}{2}\tr(A(\frac{x}{\epsilon},\omega)D^2u^\epsilon)+\frac{1}{\epsilon}b(\frac{x}{\epsilon},\omega)\cdot Du^\epsilon=g(x) & \textrm{on}\;\;U, \\ u^\epsilon=0 & \textrm{on}\;\;\partial U,\end{array}\right.\end{equation} converge, as $\epsilon\rightarrow 0$, on a subset of full probability and uniformly on $\overline{U}$, to the solution \begin{equation}\label{main_hom_1}\left\{\begin{array}{ll} \frac{\overline{\alpha}}{2}\Delta\overline{u}=g(x) & \textrm{on}\;\;U, \\ \overline{u}=0 & \textrm{on}\;\;\partial U.\end{array}\right.\end{equation}  And, the proof will essentially analyze solutions to the rescaled equation \begin{equation}\label{main_eq_2} \left\{\begin{array}{ll} \frac{1}{2}\tr(A(x,\omega)D^2v^\epsilon)+b(x,\omega)\cdot Dv^\epsilon=\epsilon^2g(\epsilon x) & \textrm{on}\;\;U/\epsilon, \\ v^\epsilon=0 & \textrm{on}\;\;\partial U/\epsilon,\end{array}\right.\end{equation} which admit the representation $$u^\epsilon(x)=v^\epsilon(\frac{x}{\epsilon})=E_{\frac{x}{\epsilon},\omega}(-\epsilon^2\int_0^{\tau^\epsilon}g(\epsilon X_s)\;ds)\;\;\textrm{on}\;\;\overline{U},$$ for $\tau^\epsilon$ the exit time from $U/\epsilon$.

The first step will be to apply a sub-optimal bound for the exit time through the use of Proposition \ref{exit_main}.  Precisely, for scales $L_n\leq \frac{1}{\epsilon}<L_{n+1}$, it will be shown on the event $A_n$ that, up to an error vanishing with $\epsilon$, the solution $v^\epsilon$ is well-approximated by the quantity \begin{equation}\label{main_outline_1}v^\epsilon(\frac{x}{\epsilon})\simeq E_{\frac{x}{\epsilon},\omega}(-\epsilon^2\int_0^{\tau^\epsilon}g(\epsilon X_s)\;ds, \tau^\epsilon\leq L_{n+2}^2).\end{equation}  Since the exit time of a corresponding Brownian motion is expected to be of order $\frac{1}{\epsilon^2}$ which, as $n\rightarrow\infty$, is significantly smaller than $L_{n+2}^2$, this estimate does not imply an effective upper bound for the exit time of the diffusion in random environment.  However, it does allow for the application of the global coupling established in Corollary \ref{couple_cor}.

The second step replaces the continuous exit time $\tau^\epsilon$ with its discrete proxy $\tau^{\epsilon,n}_1$, where the exponential estimates guaranteed on the event $A_n$ by Control \ref{main_localization} and Proposition \ref{main_time} will be used to show that the discretely stopped version of (\ref{main_outline_1}) is a good approximation for the solution $v^\epsilon$ in the sense that \begin{equation}\label{main_outline_2}E_{\frac{x}{\epsilon},\omega}(-\epsilon^2\int_0^{\tau^\epsilon}g(\epsilon X_s)\;ds, \tau^\epsilon\leq L_{n+2}^2)\simeq E_{\frac{x}{\epsilon},\omega}(-\epsilon^2\int_0^{\tau^{\epsilon,n}_1}g(\epsilon X_s)\;ds, \tau^{\epsilon,n}_1\leq L_{n+2}^2).\end{equation}  And, again using the localization estimates from Control \ref{main_localization}, the integral will be shown to be accurately represented by its discrete approximation on scale $L^2_{n-\overline{m}}$ in the sense that, up to an error vanishing with $\epsilon$, \begin{multline}\label{main_outline_3}E_{\frac{x}{\epsilon},\omega}(-\epsilon^2\int_0^{\tau^{\epsilon,n}_1}g(\epsilon X_s)\;ds, \tau^{\epsilon,n}_1\leq L_{n+2}^2)\simeq \\ E_{\frac{x}{\epsilon},\omega}(-\epsilon^2\sum_{k=0}^{(\tau^{\epsilon,n}_1-L_{n-\overline{m}}^2)/L_{n-\overline{m}}^2}L_{n-\overline{m}}^2g(\epsilon X_{kL_{n-\overline{m}}^2}), \tau^{\epsilon,n}_1\leq L_{n+2}^2).\end{multline}

The global coupling established in Section \ref{section_coupling} now plays its role.  It follows from the definition of the measure $Q_{n,\frac{x}{\epsilon}}$ and process $(X_k,\overline{X}_k)$ on $(\mathbb{R}^d\times\mathbb{R}^d)^{\mathbb{N}}$ that, writing $E^{Q_{n,\frac{x}{\epsilon}}}$ for the expectation with respect to $Q_{n,\frac{x}{\epsilon}}$, \begin{multline}\label{main_outline_4} E_{\frac{x}{\epsilon},\omega}(-\epsilon^2\sum_{k=0}^{(\tau^{\epsilon,n}_1-L_{n-\overline{m}}^2)/L_{n-\overline{m}}^2}L_{n-\overline{m}}^2g(\epsilon X_{kL_{n-\overline{m}}^2}), \tau^{\epsilon,n}_1\leq L_{n+2}^2)= \\ E^{Q_{n,\frac{x}{\epsilon}}}(-\epsilon^2\sum_{k=0}^{T^{\epsilon,n}_1-1}L_{n-\overline{m}}^2g(\epsilon X_k), T^{\epsilon,n}_1\leq(\frac{L_{n+2}}{L_{n-\overline{m}}})^2),\end{multline} for $T^{\epsilon,n}_1$ the analogue of $\tau^{\epsilon,n}_1$ for the first coordinate of $(X_k, \overline{X}_k)$. The coupling estimates stated in Corollary \ref{couple_cor} are then used to obtain a comparison with Brownian motion of variance $\alpha_{n-\overline{m}}$ and to prove, up to an error vanishing with $\epsilon$, \begin{multline}\label{main_outline_5} E^{Q_{n,\frac{x}{\epsilon}}}(-\epsilon^2\sum_{k=0}^{T^{\epsilon,n}_1-1}L_{n-\overline{m}}^2g(\epsilon X_k), T^{\epsilon,n}_1\leq(\frac{L_{n+2}}{L_{n-\overline{m}}})^2)\simeq \\ E^{Q_{n,\frac{x}{\epsilon}}}(-\epsilon^2\sum_{k=0}^{T^{\epsilon,n}_1-1}L_{n-\overline{m}}^2g(\epsilon \overline{X}_k), T^{\epsilon,n}_1\leq(\frac{L_{n+2}}{L_{n-\overline{m}}})^2).\end{multline}

The remainder of the proof is then essentially an unwinding of the above outline in terms of Brownian motion.  For $\overline{T}^{\epsilon,n}_2$ the analogue of $\tau^{\epsilon,n}_2$ defined for the second coordinate of the process $(X_k, \overline{X}_k)$, it is first shown that, up to an error vanishing with $\epsilon$, \begin{multline}\label{main_outline_6} E^{Q_{n,\frac{x}{\epsilon}}}(-\epsilon^2\sum_{k=0}^{T^{\epsilon,n}_1-1}L_{n-\overline{m}}^2g(\epsilon \overline{X}_k), T^{\epsilon,n}_1\leq(\frac{L_{n+2}}{L_{n-\overline{m}}})^2)\simeq \\ E^{Q_{n,\frac{x}{\epsilon}}}(-\epsilon^2\sum_{k=0}^{\overline{T}^{\epsilon,n}_2-1}L_{n-\overline{m}}^2g(\epsilon \overline{X}_k), \overline{T}^{\epsilon,n}_2\leq (\frac{L_{n+2}}{L_{n-\overline{m}}})^2),\end{multline} where, by the definition of $Q_{n,\frac{x}{\epsilon}}$, \begin{multline}\label{main_outline_7} E^{Q_{n,\frac{x}{\epsilon}}}(-\epsilon^2\sum_{k=0}^{\overline{T}^{\epsilon,n}_2-1}L_{n-\overline{m}}^2g(\epsilon \overline{X}_k), \overline{T}^{\epsilon,n}_2\leq (\frac{L_{n+2}}{L_{n-\overline{m}}})^2)= \\ E^{W^{n-\overline{m}}_{\frac{x}{\epsilon}}}(-\epsilon^2\sum_{k=0}^{(\tau^{\epsilon,n}_2-L_{n-\overline{m}}^2)/L_{n-\overline{m}}^2}L_{n-\overline{m}}^2g(\epsilon X_{kL_{n-\overline{m}}^2}), \tau^{\epsilon,n}_2\leq L_{n+2}^2).\end{multline}  Following standard exponential estimates for Brownian motion, using the control of the $\alpha_n$ implied by Theorem \ref{effectivediffusivity} and the upper bound for the exit time in probability obtained in Proposition \ref{main_Brownian} repeatedly, it will be shown that, up to an error vanishing with $\epsilon$, \begin{equation}\label{main_outline_8} E^{W^{n-\overline{m}}_{\frac{x}{\epsilon}}}(-\epsilon^2\sum_{k=0}^{\tau^{\epsilon,n}_2-1}L_{n-\overline{m}}^2g(\epsilon \overline{X}_{kL_{n-\overline{m}}}^2), \tau^{\epsilon,n}_2\leq L_{n+2}^2)\simeq E^{W^{n-\overline{m}}_{\frac{x}{\epsilon}}}(-\epsilon^2\int_0^{\tau^{\epsilon,n}_2}g(\epsilon X_s)\;ds, \tau^{\epsilon,n}_2\leq L_{n+2}^2).\end{equation} The same estimates then replace $\tau^{\epsilon,n}_2$ with $\tau^\epsilon$ and remove the cutoff to provide \begin{equation}\label{main_outline_10} E^{W^{n-\overline{m}}_{\frac{x}{\epsilon}}}(-\epsilon^2\int_0^{\tau^{\epsilon,n}_2}g(\epsilon X_s)\;ds, \tau^{\epsilon,n}_2\leq L_{n+2}^2)\simeq E^{W^{n-\overline{m}}_{\frac{x}{\epsilon}}}(-\epsilon^2\int_0^{\tau^\epsilon}g(\epsilon X_s)\;ds).\end{equation}

The final step comes in approximating the solution of the homogenized equation (\ref{main_hom_1}) by solutions of the approximate equations \begin{equation}\label{main_outline_11}\left\{\begin{array}{ll} \frac{\alpha_{n-\overline{m}}}{2}\Delta \overline{u}_{n-\overline{m}}=g(x) & \textrm{on}\;\;U, \\ \overline{u}_{n-\overline{m}}=0 & \textrm{on}\;\;\partial U,\end{array}\right.\end{equation} which admit the representation \begin{equation}\label{main_outline_12}\overline{u}_{n-\overline{m}}(x)=E^{W^{n-\overline{m}}_\frac{x}{\epsilon}}(-\epsilon^2\int_0^{\tau^\epsilon}g(\epsilon X_s)\;ds)=E^{W^{n-\overline{m}}_\frac{x}{\epsilon}}(-\int_0^{\epsilon^2\tau^\epsilon}g(\epsilon X_\frac{s}{\epsilon^2})\;ds)\;\;\textrm{on}\;\;\overline{U},\end{equation} and coincide with the righthand side of (\ref{main_outline_10}).

\begin{prop}\label{main_alpha}  For each $n\geq \overline{m}$, for $C=C(U,\overline{\alpha})>0$ independent of $n$, the solutions of (\ref{main_hom_1}) and (\ref{main_outline_11}) satisfy $$\norm{\overline{u}-\overline{u}_{n-\overline{m}}}_{L^\infty(\overline{U})}\leq C\norm{g}_{L^\infty(\overline{U})}L_{n-\overline{m}}^{-(1+\frac{9}{10})\delta}.$$ \end{prop}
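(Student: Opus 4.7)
The plan is to reduce the estimate to a standard maximum principle bound for the Laplacian on $U$ applied to the difference $w=\overline{u}-\overline{u}_{n-\overline{m}}$, after first quantifying the defect introduced by replacing $\overline{\alpha}$ by $\alpha_{n-\overline{m}}$. First, I would subtract the two equations. Using (\ref{main_hom_1}) and (\ref{main_outline_11}), and replacing $\tfrac{\alpha_{n-\overline{m}}}{2}\Delta \overline{u}_{n-\overline{m}}=g$ by $\tfrac{\overline{\alpha}}{2}\Delta \overline{u}_{n-\overline{m}}=\frac{\overline{\alpha}}{\alpha_{n-\overline{m}}}g$, I obtain
\[
\tfrac{\overline{\alpha}}{2}\Delta w=\Bigl(1-\tfrac{\overline{\alpha}}{\alpha_{n-\overline{m}}}\Bigr)g\quad\textrm{on}\;U,\qquad w=0\quad\textrm{on}\;\partial U.
\]

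Next, I would control the multiplicative error. By Theorem \ref{effectivediffusivity}, $|\alpha_{k+1}-\alpha_k|\leq L_k^{-(1+\frac{9}{10})\delta}$ for every $k\geq 0$, and $\alpha_k\geq\frac{1}{2\nu}$. Since the rule (\ref{L}) gives $L_{k+1}\geq\frac{1}{2}L_k^{1+a}$, the tail sum is dominated by its first term:
\[
\abs{\alpha_{n-\overline{m}}-\overline{\alpha}}\leq\sum_{k=n-\overline{m}}^{\infty}L_k^{-(1+\frac{9}{10})\delta}\leq C\,L_{n-\overline{m}}^{-(1+\frac{9}{10})\delta},
\]
where $C$ depends only on $a$ and $\delta$. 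Combining with the lower bound on $\alpha_{n-\overline{m}}$ and $\overline{\alpha}$ and the definition of the $L^\infty$ norm of $g$,
\[
\Bigl\|\Bigl(1-\tfrac{\overline{\alpha}}{\alpha_{n-\overline{m}}}\Bigr)g\Bigr\|_{L^\infty(\overline{U})}\leq C(\overline{\alpha})\norm{g}_{L^\infty(\overline{U})}L_{n-\overline{m}}^{-(1+\frac{9}{10})\delta}.
\]

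Finally, I would invoke a comparison principle with a quadratic barrier on $U$. Since $U$ is bounded, choose $R>0$ with $U\subset B_R$ and set $\varphi(x)=R^2-\abs{x}^2\geq 0$ on $\overline{U}$, so that $\tfrac{\overline{\alpha}}{2}\Delta\varphi=-d\overline{\alpha}$. For any righthand side $h$ on $U$ with $\|h\|_{L^\infty(U)}\leq H$, the function $\pm w+\frac{H}{d\overline{\alpha}}\varphi$ is super/subharmonic and nonnegative/nonpositive on $\partial U$, so the classical maximum principle yields $\norm{w}_{L^\infty(\overline{U})}\leq\frac{R^2}{d\overline{\alpha}}H$. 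Applying this with $H$ from the previous paragraph gives the claimed bound with $C=C(U,\overline{\alpha})$.

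There is no significant obstacle here: the argument is a two-line calculation plus the tail estimate from Theorem \ref{effectivediffusivity} and a textbook Poisson-equation bound. The only thing to watch is that the geometric summation over the tail $|\alpha_{k+1}-\alpha_k|$ is genuinely dominated by its first term, which is guaranteed by $L_{k+1}\geq\frac{1}{2}L_k^{1+a}$ with $a>0$.
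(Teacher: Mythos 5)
Your proof is correct and follows essentially the same route as the paper: subtract the two equations to see that $w=\overline{u}-\overline{u}_{n-\overline{m}}$ solves the Poisson problem $\frac{\overline{\alpha}}{2}\Delta w=(1-\frac{\overline{\alpha}}{\alpha_{n-\overline{m}}})g$ with zero boundary data, control $\abs{\alpha_{n-\overline{m}}-\overline{\alpha}}$ via Theorem \ref{effectivediffusivity}, and then bound $\norm{w}_{L^\infty(\overline{U})}$ by the $L^\infty$ norm of the righthand side. The only cosmetic difference is at the final step, where the paper writes the bound through the probabilistic representation $\norm{w}_{L^\infty(\overline{U})}\leq\norm{h}_{L^\infty(\overline{U})}\sup_{x\in\overline{U}}E^{W^\infty_x}(\tau_U)$ while you use a quadratic barrier and the comparison principle; these are the same estimate in two languages, and you also usefully make explicit the telescoping summation behind the bound $\abs{\alpha_{n-\overline{m}}-\overline{\alpha}}\leq CL_{n-\overline{m}}^{-(1+\frac{9}{10})\delta}$ that the paper leaves implicit.
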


\begin{proof}  Fix $n\geq \overline{m}$.  For the respective solutions $\overline{u}$ and $\overline{u}_{n-\overline{m}}$ of (\ref{main_hom_1}) and (\ref{main_outline_11}), the difference $$w_{n-\overline{m}}=\overline{u}-\overline{u}_{n-\overline{m}}$$ solves the equation $$\left\{\begin{array}{ll} \frac{\overline{\alpha}}{2}\Delta w_{n-\overline{m}}=(1-\frac{\overline{\alpha}}{\alpha_{n-\overline{m}}})g(x) & \textrm{on}\;\;U, \\ w_{n-\overline{m}}=0 & \textrm{on}\;\;\partial U.\end{array}\right.$$  Therefore, using Theorem \ref{effectivediffusivity}, for $C>0$ independent of $n$, writing $W^\infty_x$ for the Wiener measure defining Brownian motion beginning from $x$ with variance $\overline{\alpha}$, and writing $\tau_U$ for the exit time from $U$, $$\norm{w_{n-\overline{m}}}_{L^\infty(\overline{U})}\leq C\norm{g}_{L^\infty(\overline{U})}\abs{\alpha_{n-\overline{m}}-\overline{\alpha}}\sup_{x\in\overline{U}}E^{W^\infty_x}(\tau_U)\leq C\norm{g}_{L^\infty(\overline{U})}L_{n-\overline{m}}^{-(1+\frac{9}{10})\delta},$$ which completes the argument. \end{proof}

And, therefore, in view of the representation (\ref{main_outline_12}) and Proposition \ref{main_alpha}, up to an error vanishing with $\epsilon$, \begin{equation}\label{main_outline_14}E^{W^{n-\overline{m}}_{\frac{x}{\epsilon}}}(-\epsilon^2\int_0^{\tau^\epsilon}g(\epsilon X_s)\;ds)\simeq E^{W^\infty_{\frac{x}{\epsilon}}}(-\epsilon^2\int_0^{\tau^\epsilon}g(\epsilon X_s)\;ds),\end{equation} which completes the proof.  The full homogenization statement including a nonzero boundary condition then follows immediately after recalling the results from \cite{F3}.

The stochastic homogenization will be obtained on a subset of full probability defined by the events $A_n$ and an application of the Borel-Cantelli lemma.  Since Proposition \ref{prob_probability} implies that, for each $n\geq \overline{m}$, for $C\geq 0$ independent of $n$, $$\mathbb{P}(\Omega\setminus A_n)\leq CL_n^{2d(1+a)^2-\frac{1}{2}M_0},$$  the definition of $L_n$ in (\ref{L}) and the negative exponent $2d(1+a)^2-\frac{1}{2}M_0<0$ guarantee the sum $$\sum_{n=\overline{m}}^\infty\mathbb{P}(\Omega\setminus A_n)\leq C\sum_{n=\overline{m}}^\infty L_n^{2d(1+a)^2-\frac{1}{2}M_0}<\infty.$$  The Borel-Cantelli lemma therefore implies the event \begin{equation}\label{main_mainevent} \Omega_0=\left\{\;\omega\in\Omega\;|\;\textrm{There exists}\;\overline{n}=\overline{n}(\omega)\;\textrm{such that}\;\omega\in A_n\;\textrm{for all}\;n\geq \overline{n}.\;\right\}\;\;\textrm{satisfies}\;\;\mathbb{P}(\Omega_0)=1.\end{equation}  Note particularly that the subset of full probability $\Omega_0$ is independent of the domain and the righthand side.  It is on this event that homogenization is achieved following the outline presented between lines (\ref{main_outline_1}) to (\ref{main_outline_14}).

The result is first established for functions which are the restriction of a smooth, compactly supported function on $\mathbb{R}^d$.  \begin{equation}\label{main_smooth}\textrm{Assume}\;\;g(x)\in C^\infty_c(\mathbb{R}^d).\end{equation}  This assumption is removed by a standard approximation argument in Theorem \ref{main_arbitrary}.

\begin{thm}\label{main_main} Assume (\ref{steady}), (\ref{constants}) and (\ref{main_smooth}).  For every $\omega\in \Omega_0$, the respective solutions $u^\epsilon$ and $\overline{u}$ of (\ref{main_eq_1}) and (\ref{main_hom_1}) satisfy $$\lim_{\epsilon\rightarrow 0}\norm{u^\epsilon-\overline{u}}_{L^\infty(\overline{U})}=0.$$\end{thm}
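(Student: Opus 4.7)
The plan is to execute the outline already given by the author between (\ref{main_outline_1}) and (\ref{main_outline_14}). Fix $\omega\in\Omega_0$ and $\epsilon>0$ small enough that $n\ge \overline n(\omega)$, where $L_n\le \tfrac1\epsilon<L_{n+1}$; in particular $\omega\in A_n$, so Controls \ref{main_localization} and \ref{Holder}, Corollary \ref{couple_cor} and Propositions \ref{main_time} and \ref{main_Brownian} are all available uniformly on $\overline U/\epsilon$. Starting from the probabilistic representation
$$u^\epsilon(x)=E_{x/\epsilon,\omega}\Bigl(-\epsilon^2\int_0^{\tau^\epsilon}g(\epsilon X_s)\,ds\Bigr),$$
the goal is to transform this quenched expectation, through a finite chain of approximation steps each producing an error vanishing with $\epsilon$, into the analogous expectation against a Brownian motion of variance $\overline\alpha$, which by Proposition \ref{main_alpha} equals $\overline u(x)$ up to an error of order $L_{n-\overline m}^{-(1+9/10)\delta}\norm{g}_{L^\infty(\overline U)}$.

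The chain has six links. (a) Truncate the exit time by $\tau^\epsilon\le L_{n+2}^2$ and, iterating in $k$, control the tails by the geometrically decaying bounds of Proposition \ref{exit_main}. (b) Replace $\tau^\epsilon$ by the discrete stopping time $\tau_1^{\epsilon,n}$ via Proposition \ref{main_time}: the part where they differ by at least $L_{n-1}^2$ contributes $\epsilon^2 L_{n+2}^2\norm{g}_\infty$ times a probability of order $L_{n-1}^{-10a}+(\epsilon L_{n+2})^{-3}$, while the remainder contributes at most $\epsilon^2 L_{n-1}^2\norm{g}_\infty$. (c) Discretize the integral on the mesh $L_{n-\overline m}^2$; the localization Control \ref{main_localization} bounds the path excursion over each subinterval by $\tilde D_{n-\overline m}$ outside an exponentially small event, and the Lipschitz continuity of $g$ contributes an additional $\epsilon L_{n+2}^2\tilde D_{n-\overline m}\norm{Dg}_\infty$, small by (\ref{main_zeta_1}). (d) Re-express the resulting Riemann sum through the coupling $Q_{n,x/\epsilon}$ of Proposition \ref{couple_main}: on the event $C_n^c$ of Corollary \ref{couple_cor} the coupled Brownian chain $\overline X_k$ satisfies $\abs{X_k-\overline X_k}\le L_{n-\overline m}$ for all $k$ in the relevant range, producing an error of order $\epsilon L_{n+2}^2 L_{n-\overline m}\norm{Dg}_\infty$ plus the coupling tail $O(L_{n-1}^{-10a})$ by (\ref{main_zeta_2}). (e) Reverse steps (a)--(c) on the Brownian side, using Proposition \ref{main_Brownian} to pass between the discrete stopping times $T_1^{\epsilon,n}$ and $\overline T_2^{\epsilon,n}$, standard Gaussian exponential estimates on scale $\tilde D_{n-\overline m}$ to un-discretize, and the uniform control $\tfrac1{2\nu}\le \alpha_{n-\overline m}\le 2\nu$ from Theorem \ref{effectivediffusivity}, ultimately arriving at $E^{W^{n-\overline m}_{x/\epsilon}}\bigl(-\epsilon^2\int_0^{\tau^\epsilon}g(\epsilon X_s)\,ds\bigr)$. (f) Apply Proposition \ref{main_alpha} to replace $\alpha_{n-\overline m}$ by $\overline\alpha$, which by (\ref{main_outline_12}) produces $\overline u(x)$.

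The main obstacle is the bookkeeping. Each link above generates an error that is the product of a small probability (coming either from a high-probability event like $A_n$, from a coupling tail, or from a Gaussian/exponential localization estimate) and a crude integrand bound of order $\epsilon^2 L_{n+2}^2\norm{g}_\infty$ that stems from the worst-case time horizon $\tau^\epsilon\le L_{n+2}^2$ imposed in step (a). Since $\epsilon^2 L_{n+2}^2\le L_{n+2}^2/L_n^2$ is at most a fixed small positive power of $L_n$ by the recursion defining $L_n$ in (\ref{L}), and since $\delta$ and the exponent in (\ref{main_zeta_2}) dominate $a$ by (\ref{Holderexponent}) and (\ref{delta}), each such product is bounded by a negative power of $L_n$. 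Uniform smallness in $x\in\overline U/\epsilon$ is inherited because every estimate invoked is already uniform on that set, and the theorem follows by letting $\epsilon\to0$ and collecting the errors from (a)--(f).
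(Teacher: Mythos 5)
Your proposal is correct and follows essentially the same route as the paper: you explicitly execute the author's outline from (\ref{main_outline_1}) through (\ref{main_outline_14}), and the paper's proof is precisely the detailed realization of those six links, with the same truncation, discretization, coupling, Brownian-side reversal, and final diffusivity replacement, and with the same error bookkeeping in powers of $L_n$.
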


\begin{proof}  Fix $\omega\in\Omega_0$ and $n_0\geq \overline{m}$ such that, for all $n\geq n_0$, for $r_0$ the constant quantifying the exterior ball condition, $$2\tilde{D}_{n-\overline{m}}<\frac{r_0L_n}{2}\;\;\textrm{and}\;\;\omega\in A_n.$$  Then, fix $\epsilon_0>0$ sufficiently small so that, whenever $0<\epsilon<\epsilon_0$ satisfies $L_n\leq \frac{1}{\epsilon}<L_{n+1}$ it follows that $n\geq n_0$.  Furthermore, using the boundedness of the domain $U$, choose $0<\epsilon_1<\epsilon_0$ such that, whenever $0<\epsilon<\epsilon_1$ and $L_n\leq \frac{1}{\epsilon}<L_{n+1}$, $$L_{n+1}U\subset [-\frac{1}{2}L_{n+2}^2, \frac{1}{2}L_{n+2}^2]^d.$$  These conditions guarantee that whenever $0<\epsilon<\epsilon_1$ the conclusions of Proposition \ref{main_Brownian} and \ref{main_time} are satisfied, and that Controls \ref{Holder} and \ref{main_localization} are available, on scales $L_{n-\overline{m}}$ to $L_{n+2}$, for the entirety of the domain $U/\epsilon$.  

Henceforth, fix $x\in\overline{U}$ and $0<\epsilon<\epsilon_1$.  Write $u^\epsilon$ for the solution of (\ref{main_eq_1}) and $v^\epsilon$ for the solution of the rescaled (\ref{main_eq_2}), and recall the representation \begin{equation}\label{main_main_1}u^\epsilon(x)=v^\epsilon(\frac{x}{\epsilon})=E_{\frac{x}{\epsilon},\omega}(-\epsilon^2\int_0^{\tau^\epsilon}g(\epsilon X_s)\;ds).\end{equation}  In order to apply the coupling estimates obtained in Section \ref{section_coupling}, it is necessary to restrict the above integral to the event $\left\{\tau^\epsilon\leq L_{n+2}^2\right\}$.

\emph{The proof of (\ref{main_outline_1}).}  First, observe that \begin{multline}\label{main_main_2}\abs{E_{\frac{x}{\epsilon},\omega}(-\epsilon^2\int_0^{\tau^\epsilon}g(\epsilon X_s)\;ds)-E_{\frac{x}{\epsilon},\omega}(-\epsilon^2\int_0^{\tau^\epsilon}g(\epsilon X_s)\;ds, \tau^\epsilon\leq L_{n+2}^2)}\leq \\ \epsilon^2\norm{g}_{L^\infty(\mathbb{R}^d)}\sum_{k=1}^\infty(k+1)L_{n+2}^2P_{\frac{x}{\epsilon},\omega}(kL_{n+2}^2< \tau^\epsilon\leq (k+1)L_{n+2}^2)\leq \\ \epsilon^2\norm{g}_{L^\infty(\mathbb{R}^d)}\sum_{k=1}^\infty(k+1)L_{n+2}^2P_{\frac{x}{\epsilon},\omega}(\tau^\epsilon> kL_{n+2}^2).\end{multline}  Therefore, since $L_n\leq\frac{1}{\epsilon}<L_{n+1}$, and since Proposition \ref{exit_main} proved that, on the event $A_n$, for each $k\geq 0$, for $C>0$ independent of $n$ and $k$, $$P_{\frac{x}{\epsilon},\omega}(\tau^\epsilon> kL_{n+2}^2)\leq C(\epsilon L_{n+2})^{-3k},$$ it follows from the definition of $L_n$ in (\ref{L}) and properties of the geometric series that, for $C>0$ independent of $n$, \begin{multline}\label{main_main_3} \epsilon^2\norm{g}_{L^\infty(\overline{U})}\sum_{k=1}^\infty(k+1)L_{n+2}^2P_{\frac{x}{\epsilon},\omega}(\tau^\epsilon > kL_{n+2}^2) \leq C\norm{g}_{L^\infty(\mathbb{R}^d)}(\epsilon L_{n+2})^2\sum_{k=1}^\infty (k+1)(\epsilon L_{n+2})^{-3k} \\ \leq C\norm{g}_{L^\infty(\mathbb{R}^d)}(\epsilon L_{n+2})^{-1}.\end{multline}  Notice that the effectiveness of this estimates relies upon the assumption $d\geq 3$ through the application of Proposition \ref{exit_main}.  And, since $L_n\leq\frac{1}{\epsilon}< L_{n+1}$, for $C>0$ independent of $n$, \begin{equation}\label{main_main_4}\abs{E_{\frac{x}{\epsilon},\omega}(-\epsilon^2\int_0^{\tau^\epsilon}g(\epsilon X_s)\;ds)-E_{\frac{x}{\epsilon},\omega}(-\epsilon^2\int_0^{\tau^\epsilon}g(\epsilon X_s)\;ds, \tau^\epsilon<L_{n+2}^2)}\leq C\norm{g}_{L^\infty(\mathbb{R}^d)}\frac{L_{n+1}}{L_{n+2}},\end{equation} which completes the proof of (\ref{main_outline_1}).

\emph{The proof of (\ref{main_outline_2}).}  Recall the discrete $\C([0,\infty);\mathbb{R}^d)$ stopping time  $$\tau^{\epsilon,n}_1=\inf\left\{\;kL_{n-\overline{m}}^2\geq 0\;|\;\d(X_{kL_{n-\overline{m}}^2},(U/\epsilon)^c)\leq \tilde{D}_{n-\overline{m}}\;\right\}.$$  First, decompose the second term of (\ref{main_main_4}) as \begin{multline}\label{main_main_5} E_{\frac{x}{\epsilon},\omega}(-\epsilon^2\int_0^{\tau^\epsilon}g(\epsilon X_s)\;ds, \tau^\epsilon<L_{n+2}^2)=\\ E_{\frac{x}{\epsilon},\omega}(-\epsilon^2\int_0^{\tau^\epsilon}g(\epsilon X_s)\;ds, \tau^\epsilon<L_{n+2}^2, \tau^\epsilon +L_{n-\overline{m}}^2 \leq \tau^{\epsilon,n}_1)+\\ E_{\frac{x}{\epsilon},\omega}(-\epsilon^2\int_0^{\tau^\epsilon}g(\epsilon X_s)\;ds, \tau^\epsilon\leq L_{n+2}^2, \tau^\epsilon +L_{n-\overline{m}}^2 >\tau^{\epsilon,n}_1).\end{multline}  Since $\omega\in A_n$ and because the definitions imply that on the event $\left\{\tau^\epsilon +L_{n-\overline{m}}^2 \leq \tau^{\epsilon,n}_1\right\}$ the diffusion undergoes an excursion of size at least $\tilde{D}_{n-\overline{m}}$ in time $L_{n-\overline{m}}^2$, the exponential estimates guaranteed by Control \ref{main_localization} act to bound the first term of this equality, and yield \begin{multline}\label{main_main_6} \abs{E_{\frac{x}{\epsilon},\omega}(-\epsilon^2\int_0^{\tau^\epsilon}g(\epsilon X_s)\;ds, \tau^\epsilon\leq L_{n+2}^2, \tau^\epsilon+L_{n-\overline{m}}^2\leq \tau^{\epsilon,n}_1)}\leq \\ \epsilon^2L_{n+2}^2\norm{g}_{L^\infty(\overline{U})}E_{\frac{x}{\epsilon},\omega}(P_{X_{\tau^\epsilon},\omega}(X^*_{L_{n-\overline{m}}^2}\geq \tilde{D}_{n-\overline{m}}))\leq (\frac{L_{n+2}}{L_n})^2\norm{g}_{L^\infty(\mathbb{R}^d)}\exp(-\tilde{\kappa}_{n-\overline{m}}).\end{multline}

The second term of (\ref{main_main_5}) is further decomposed according to \begin{multline}\label{main_main_7}E_{\frac{x}{\epsilon},\omega}(-\epsilon^2\int_0^{\tau^\epsilon}g(\epsilon X_s)\;ds, \tau^\epsilon\leq L_{n+2}^2, \tau^\epsilon +L_{n-\overline{m}}^2 >\tau^{\epsilon,n}_1)= \\ E_{\frac{x}{\epsilon},\omega}(-\epsilon^2\int_0^{\tau^\epsilon}g(\epsilon X_s)\;ds, \tau^\epsilon\leq L_{n+2}^2, 0 \leq \tau^{\epsilon,n}_1-\tau^\epsilon<L_{n-\overline{m}}^2)+ \\ E_{\frac{x}{\epsilon},\omega}(-\epsilon^2\int_0^{\tau^\epsilon}g(\epsilon X_s)\;ds, \tau^\epsilon\leq L_{n+2}^2, \tau^\epsilon-\tau^{\epsilon,n}_1>0).\end{multline}  In comparing the lefthand side of (\ref{main_main_7}) with the discretely stopped version \begin{equation}\label{main_main_8}E_{\frac{x}{\epsilon},\omega}(-\epsilon^2\int_0^{\tau^{\epsilon,n}_1}g(\epsilon X_s)\;ds, \tau^\epsilon\leq L_{n+2}^2, \tau^\epsilon +L_{n-\overline{m}}^2 >\tau^{\epsilon,n}_1),\end{equation} the decomposition (\ref{main_main_7}) implies that the difference is bounded by \begin{multline}\label{main_main_9} |E_{\frac{x}{\epsilon},\omega}(-\epsilon^2\int_0^{\tau^\epsilon}g(\epsilon X_s)\;ds, \tau^\epsilon\leq L_{n+2}^2, \tau^\epsilon +L_{n-\overline{m}}^2 >\tau^{\epsilon,n}_1)- \\ E_{\frac{x}{\epsilon},\omega}(-\epsilon^2\int_0^{\tau^{\epsilon,n}_1}g(\epsilon X_s)\;ds, \tau^\epsilon\leq L_{n+2}^2, \tau^\epsilon +L_{n-\overline{m}}^2>\tau^{\epsilon,n}_1)|\leq \\ \epsilon^2\norm{g}_{L^\infty(\mathbb{R}^d)}E_{\frac{x}{\epsilon},\omega}(\tau^{\epsilon,n}_1-\tau^\epsilon, \tau^\epsilon\leq L_{n+2}^2, 0 \leq \tau^{\epsilon,n}_1-\tau^\epsilon<L_{n-\overline{m}}^2) + \\\epsilon^2\norm{g}_{L^\infty(\mathbb{R}^d)}E_{\frac{x}{\epsilon},\omega}(\tau^\epsilon-\tau^{\epsilon,n}_1, \tau^\epsilon\leq L_{n+2}^2, \tau^\epsilon-\tau^{\epsilon,n}_1>0).\end{multline}  The event describing the first term of the righthand side of (\ref{main_main_9}) allows for the immediate $L^\infty$-estimate of the integrand \begin{multline}\label{main_main_10}\epsilon^2\norm{g}_{L^\infty(\mathbb{R}^d)}E_{\frac{x}{\epsilon},\omega}(\tau^{\epsilon,n}_1-\tau^\epsilon, \tau^\epsilon\leq L_{n+2}^2, 0 \leq \tau^{\epsilon,n}_1-\tau^\epsilon<L_{n-\overline{m}}^2)\leq \\ \epsilon^2\norm{g}_{L^\infty(\mathbb{R}^d)}L_{n-\overline{m}}^2\leq (\frac{L_{n-\overline{m}}}{L_n})^2\norm{g}_{L^\infty(\mathbb{R}^d)}.\end{multline}  The second term of the righthand side of (\ref{main_main_9}) is bounded using Proposition \ref{main_time}.  Form the decomposition \begin{multline}\label{main_main_11} \epsilon^2\norm{g}_{L^\infty(\mathbb{R}^d)}E_{\frac{x}{\epsilon},\omega}(\tau^\epsilon-\tau^{\epsilon,n}_1, \tau^\epsilon\leq L_{n+2}^2, \tau^\epsilon-\tau^{\epsilon,n}_1>0)=\\ \epsilon^2\norm{g}_{L^\infty(\mathbb{R}^d)}E_{\frac{x}{\epsilon},\omega}(\tau^\epsilon-\tau^{\epsilon,n}_1, \tau^\epsilon\leq L_{n+2}^2, 0<\tau^\epsilon-\tau^{\epsilon,n}_1\leq L_{n-1}^2)+\\ \epsilon^2\norm{g}_{L^\infty(\mathbb{R}^d)}E_{\frac{x}{\epsilon},\omega}(\tau^\epsilon-\tau^{\epsilon,n}_1, \tau^\epsilon\leq L_{n+2}^2, \tau^\epsilon-\tau^{\epsilon,n}_1>L_{n-1}^2).\end{multline}  The event defining the first term of the righthand side of (\ref{main_main_11}) admits the immediate $L^\infty$-estimate for the integrand \begin{multline}\label{main_main_12}\epsilon^2\norm{g}_{L^\infty(\mathbb{R}^d)}E_{\frac{x}{\epsilon},\omega}(\tau^\epsilon-\tau^{\epsilon,n}_1, \tau^\epsilon\leq L_{n+2}^2, 0<\tau^\epsilon-\tau^{\epsilon,n}_1\leq L_{n-1}^2)\leq \\ \epsilon^2L_{n-1}^2\norm{g}_{L^\infty(\mathbb{R}^d)}\leq (\frac{L_{n-1}}{L_n})^2\norm{g}_{L^\infty(\mathbb{R}^d)}.\end{multline}  Then, Proposition \ref{main_time} is applied to the second term of (\ref{main_main_11}), and yields, for $C>0$ independent of $n$, \begin{multline}\label{main_main_14} \epsilon^2\norm{g}_{L^\infty(\mathbb{R}^d)}E_{\frac{x}{\epsilon},\omega}(\tau^\epsilon-\tau^{\epsilon,n}_1, \tau^\epsilon\leq L_{n+2}^2, \tau^\epsilon-\tau^{\epsilon,n}_1>L_{n-1}^2)\leq \\ \epsilon^2\norm{g}_{L^\infty(\mathbb{R}^d)}E_{\frac{x}{\epsilon},\omega}(\tau^\epsilon, \tau^\epsilon\leq L_{n+2}^2, \tau^\epsilon-\tau^{\epsilon,n}_1>L_{n-1}^2)\leq \\ \epsilon^2\norm{g}_{L^\infty(\mathbb{R}^d)}L_{n+2}^2P_{\frac{x}{\epsilon},\omega}(\tau^\epsilon-\tau^{\epsilon,n}_1\geq L_{n-1}^2)\leq \\ C\norm{g}_{L^\infty(\mathbb{R}^d)}(\epsilon L_{n+2})^2(L_{n-1}^{-10a}+(\epsilon L_{n+2})^{-3}).\end{multline}  Therefore, owing to the definition of $L_n$ in (\ref{L}), since $L_n\leq \frac{1}{\epsilon}<L_{n+1}$, for $C>0$ independenet of $n$, \begin{multline}\label{main_main_15} \epsilon^2\norm{g}_{L^\infty(\mathbb{R}^d)}E_{\frac{x}{\epsilon},\omega}(\tau^\epsilon-\tau^{\epsilon,n}_1, \tau^\epsilon\leq L_{n+2}^2, \tau^\epsilon-\tau^{\epsilon,n}_1>L_{n-1}^2)\leq \\ C\norm{g}_{L^\infty(\mathbb{R}^d)}(L_n^{4a+2a^2-\frac{10a}{1+a}}+\frac{L_{n+1}}{L_{n+2}}),\end{multline} where definition (\ref{Holderexponent}) implies the exponent $$4a+2a^2-\frac{10a}{1+a}<0$$ is negative.

In combination, (\ref{main_main_10}), (\ref{main_main_12}) and (\ref{main_main_15}) imply using (\ref{main_main_9}) the bound, for $C>0$ independent of $n$, \begin{multline}\label{main_main_16} |E_{\frac{x}{\epsilon},\omega}(-\epsilon^2\int_0^{\tau^\epsilon}g(\epsilon X_s)\;ds, \tau^\epsilon\leq L_{n+2}^2, \tau^\epsilon +L_{n-\overline{m}}^2 >\tau^{\epsilon,n}_1)- \\ E_{\frac{x}{\epsilon},\omega}(-\epsilon^2\int_0^{\tau^{\epsilon,n}_1}g(\epsilon X_s)\;ds, \tau^\epsilon\leq L_{n+2}^2, \tau^\epsilon +L_{n-\overline{m}}^2 >\tau^{\epsilon,n}_1)| \leq \\ C\norm{g}_{L^\infty(\mathbb{R}^d)}((\frac{L_{n-1}}{L_n})^2+L_n^{4a+2a^2-\frac{10a}{1+a}}+\frac{L_{n+1}}{L_{n+2}}). \end{multline}  And, since the definitions (\ref{L}) and (\ref{kappa}) imply that, for $C>0$ independent of $n$, $$(\frac{L_{n+2}}{L_n})^2\exp(-\tilde{\kappa}_{n-\overline{m}})\leq C((\frac{L_{n-1}}{L_n})^2+L_n^{4a+2a^2-\frac{10a}{1+a}}+\frac{L_{n+1}}{L_{n+2}}),$$ equation (\ref{main_main_5}) and estimates (\ref{main_main_6}) and (\ref{main_main_16}) combine for the estimate, for $C>0$ independent of $n$, \begin{multline}\label{main_main_17} |E_{\frac{x}{\epsilon},\omega}(-\epsilon^2\int_0^{\tau^\epsilon}g(\epsilon X_s)\;ds, \tau^\epsilon\leq L_{n+2}^2)- \\ E_{\frac{x}{\epsilon},\omega}(-\epsilon^2\int_0^{\tau^{\epsilon,n}_1}g(\epsilon X_s)\;ds, \tau^\epsilon\leq L_{n+2}^2, \tau^\epsilon +L_{n-\overline{m}}^2 >\tau^{\epsilon,n}_1)| \leq \\ C\norm{g}_{L^\infty(\mathbb{R}^d)}((\frac{L_{n-1}}{L_n})^2+L_n^{4a+2a^2-\frac{10a}{1+a}}+\frac{L_{n+1}}{L_{n+2}}).\end{multline}

To obtain (\ref{main_outline_2}), it remains only to estimate the difference between the discretely stopped quantity within the absolute value of (\ref{main_main_17}) and \begin{equation}\label{main_main_18} E_{\frac{x}{\epsilon},\omega}(-\epsilon^2\int_0^{\tau^{\epsilon,n}_1}g(\epsilon X_s)\;ds, \tau^{\epsilon,n}_1\leq L_{n+2}^2).\end{equation}  First, notice with the aid of Proposition \ref{exit_main} that, for $C>0$ independent of $n$, \begin{multline}\label{main_main_19} |E_{\frac{x}{\epsilon},\omega}(-\epsilon^2\int_0^{\tau^{\epsilon,n}_1}g(\epsilon X_s)\;ds, \tau^\epsilon\leq L_{n+2}^2, \tau^\epsilon +L_{n-\overline{m}}^2 >\tau^{\epsilon,n}_1)- \\ E_{\frac{x}{\epsilon},\omega}(-\epsilon^2\int_0^{\tau^{\epsilon,n}_1}g(\epsilon X_s)\;ds, \tau^{\epsilon,n}_1<L_{n+2}^2+L_{n-\overline{m}}^2)|\leq \\ C\epsilon^2\norm{g}_{L^\infty(\mathbb{R}^d)}L_{n+2}^2P_{\frac{x}{\epsilon},\omega}(\tau^\epsilon>L_{n+2}^2)\leq \\ C\norm{g}_{L^\infty(\mathbb{R}^d)}(\epsilon L_{n+2})^{-1}\leq C\norm{g}_{L^\infty(\mathbb{R}^d)}\frac{L_{n+1}}{L_{n+2}}.\end{multline}  And then, again using Proposition \ref{exit_main} and in particular line (\ref{exit_main_5}) which applies equally to the discrete sequence since $L_{n-\overline{m}}^2$ divides $L_{n+2}^2$, for $C>0$ independent of $n$, \begin{multline}\label{main_main_20} |E_{\frac{x}{\epsilon},\omega}(-\epsilon^2\int_0^{\tau^{\epsilon,n}_1}g(\epsilon X_s)\;ds, \tau^{\epsilon,n}_1<L_{n+2}^2+L_{n-\overline{m}}^2)-E_{\frac{x}{\epsilon},\omega}(-\epsilon^2\int_0^{\tau^{\epsilon,n}_1}g(\epsilon X_s)\;ds, \tau^{\epsilon,n}_1 \leq L_{n+2}^2)| \leq \\ C\epsilon^2\norm{g}_{L^\infty(\mathbb{R}^d)}L_{n+2}^2 P_{\frac{x}{\epsilon},\omega}(\tau^{\epsilon,n}_1>L_{n+2}^2)\leq C\norm{g}_{L^\infty(\mathbb{R}^d)}(\epsilon L_{n+2})^{-1}\leq C\norm{g}_{L^\infty(\mathbb{R}^d)}\frac{L_{n+1}}{L_{n+2}}.\end{multline}  Therefore, in view of (\ref{main_main_17}), (\ref{main_main_19}) and (\ref{main_main_20}), for $C>0$ independent of $n$, \begin{multline}\label{main_main_21} \abs{E_{\frac{x}{\epsilon},\omega}(-\epsilon^2\int_0^{\tau^\epsilon}g(\epsilon X_s)\;ds, \tau^\epsilon\leq L_{n+2}^2)-E_{\frac{x}{\epsilon},\omega}(-\epsilon^2\int_0^{\tau^{\epsilon,n}_1}g(\epsilon X_s)\;ds, \tau^{\epsilon,n}_1\leq L_{n+2}^2)}\leq \\ C\norm{g}_{L^\infty(\mathbb{R}^d)}((\frac{L_{n-1}}{L_n})^2+L_n^{4a+2a^2-\frac{10a}{1+a}}+\frac{L_{n+1}}{L_{n+2}}),\end{multline} which completes the proof of (\ref{main_outline_2}).

\emph{The Proof of (\ref{main_outline_3}).}  The discrete approximation of the integral is a result of the Lipschitz continuity of $g$ and the exponential estimates implied by Control \ref{main_localization}.  Observe that, for $C>0$ independent of $n$, \begin{multline}\label{main_main_22} |E_{\frac{x}{\epsilon},\omega}(-\epsilon^2\int_0^{\tau^{\epsilon,n}_1}g(\epsilon X_s)\;ds, \tau^{\epsilon,n}_1\leq L_{n+2}^2)- \\ E_{\frac{x}{\epsilon},\omega}(-\epsilon^2\sum_{k=0}^{(\tau^{\epsilon,n}_1-L_{n-\overline{m}}^2)/L_{n-\overline{m}}^2}L_{n-\overline{m}}^2g(\epsilon X_{kL_{n-\overline{m}}^2}), \tau^{\epsilon,n}_1\leq L_{n+2}^2)|\leq \\ C\norm{g}_{L^\infty(\mathbb{R}^d)}(\epsilon L_{n-\overline{m}})^2E_{\frac{x}{\epsilon},\omega}(\sum_{k=0}^{(\tau^{\epsilon,n}_1-L_{n-\overline{m}}^2)/L_{n-\overline{m}}^2}P_{X_{kL_{n-\overline{m}}^2},\omega}(X^*_{L_{n-\overline{m}}^2}> \tilde{D}_{n-\overline{m}}),  \tau^{\epsilon,n}_1\leq L_{n+2}^2)+\\ C(\epsilon L_{n-\overline{m}})^2\norm{Dg}_{L^\infty(\mathbb{R}^d)}\epsilon \tilde{D}_{n-\overline{m}}E_{\frac{x}{\epsilon},\omega}(\sum_{k=0}^{(\tau^{\epsilon,n}_1-L_{n-\overline{m}}^2)/L_{n-\overline{m}}^2}P_{X_{kL_{n-\overline{m}}^2},\omega}(X^*_{L_{n-\overline{m}}^2}\leq \tilde{D}_{n-\overline{m}}),  \tau^{\epsilon,n}_1\leq L_{n+2}^2).\end{multline}  And, therefore, Control \ref{main_localization}, the event $\left\{\tau^{\epsilon,n}_1\leq L_{n+2}^2\right\}$ and $L_n\leq \frac{1}{\epsilon}<L_{n+1}$ imply that, for $C>0$ independent of $n$, \begin{multline}\label{main_main_23} |E_{\frac{x}{\epsilon},\omega}(-\epsilon^2\int_0^{\tau^{\epsilon,n}_1}g(\epsilon X_s)\;ds, \tau^{\epsilon,n}_1\leq L_{n+2}^2)- \\ E_{\frac{x}{\epsilon},\omega}(-\epsilon^2\sum_{k=0}^{(\tau^{\epsilon,n}_1-L_{n-\overline{m}}^2)/L_{n-\overline{m}}^2}L_{n-\overline{m}}^2g(\epsilon X_{kL_{n-\overline{m}}^2}), \tau^{\epsilon,n}_1\leq L_{n+2}^2)|\leq \\ C\norm{g}_{L^\infty(\mathbb{R}^d)} (\frac{L_{n+2}}{L_n})^2\exp(-\tilde{\kappa}_{n-\overline{m}})+C\norm{Dg}_{L^\infty(\mathbb{R}^d)}\epsilon^3L_{n+2}^2\tilde{D}_{n-\overline{m}}.\end{multline}  Since the definitions (\ref{L}), (\ref{kappa}) and (\ref{D}), the choice of $\overline{m}$ in (\ref{main_mbar}) and $L_n\leq \frac{1}{\epsilon}<L_{n+1}$ ensure that, for $C>0$ independent of $n$, $$(\frac{L_{n+2}}{L_n})^2\exp(-\tilde{\kappa}_{n-\overline{m}})\leq C\epsilon^3L_{n+2}^2\tilde{D}_{n-\overline{m}}\leq CL_n^{-3+2(1+a)^2+(1+a)^{-\overline{m}}}\leq CL_n^{-5a},$$ the lefthand side of (\ref{main_main_23}) is bounded, for $C>0$ independent of $n$, by \begin{equation}\label{main_main_24} C(\norm{g}_{L^\infty(\mathbb{R}^d)}+\norm{Dg}_{L^\infty(\mathbb{R}^d)})L_{n}^{-5a},\end{equation} which completes the proof of (\ref{main_outline_3}).

Recall the stopping time $$T^{\epsilon,n}_1=\inf\left\{\;k\geq 0\;|\;(X_k,\overline{X}_k)\;\textrm{satisfies}\;d(X_k,(U/\epsilon)^c)\leq \tilde{D}_{n-\overline{m}}\;\right\},$$ which is the discrete version of $\tau^{\epsilon,n}_1$ defined for the first coordinate of the process $(X_k, \overline{X}_k)$ described by the measure $Q_{n,\frac{x}{\epsilon}}$ constructed in Section \ref{section_coupling}.  The definition of $Q_{n,\frac{x}{\epsilon}}$ and the Markov property imply that \begin{multline}\label{main_main_25} E_{\frac{x}{\epsilon},\omega}(-\epsilon^2\sum_{k=0}^{(\tau^{\epsilon,n}_1-L_{n-\overline{m}}^2)/L_{n-\overline{m}}^2}L_{n-\overline{m}}^2g(\epsilon X_{kL_{n-\overline{m}}^2}), \tau^{\epsilon,n}_1\leq L_{n+2}^2)= \\ E^{Q_{n,\frac{x}{\epsilon}}}(-\epsilon^2\sum_{k=0}^{T^{\epsilon,n}_1-1}L_{n-\overline{m}}^2g(\epsilon X_k), T^{\epsilon,n}_1\leq(\frac{L_{n+2}}{L_{n-\overline{m}}})^2),\end{multline} and therefore, to recap the progress to this point, in combination (\ref{main_main_1}), (\ref{main_main_4}), (\ref{main_main_17}), (\ref{main_main_21}) (\ref{main_main_24}) imply that, for $C>0$ independent of $n$, \begin{multline}\label{main_main_26} \abs{u^\epsilon(x)-E^{Q_{n,\frac{x}{\epsilon}}}(-\epsilon^2\sum_{k=0}^{T^{\epsilon,n}_1-1}L_{n-\overline{m}}^2g(\epsilon X_k), T^{\epsilon,n}_1\leq(\frac{L_{n+2}}{L_{n-\overline{m}}})^2)}\leq \\ C\norm{g}_{L^\infty(\overline{U})}(\frac{L_{n+1}}{L_{n+2}}+(\frac{L_{n-1}}{L_n})^2+L_n^{4a+2a^2-\frac{10a}{1+a}}+L_n^{-5a})+C\norm{Dg}_{L^\infty(\mathbb{R}^d)}L_n^{-5a}.\end{multline}  This estimate effectively proves the efficacy of the discrete approximation scheme.  The next step in the proof will follow from the global coupling estimates established by Corollary \ref{couple_cor} and standard estimates for Brownian motion.

\emph{The Proof of (\ref{main_outline_5}).}  Let $Q_{n,\frac{x}{\epsilon}}$ denote the measure describing the Markov process $(X_k, \overline{X}_k)$ on $(\mathbb{R}^d\times\mathbb{R}^d)^\mathbb{N}$ constructed in Proposition \ref{couple_main}, and define $C_n$ to be the event $$C_n=(\abs{X_k-\overline{X}_k}\geq L_{n-\overline{m}}\;|\;\textrm{for some}\;0\leq k\leq 2(\frac{L_{n+2}}{L_{n-\overline{m}}})^2),$$ where Corollary \ref{couple_cor} asserts that, for $C>0$ independent of $n$, \begin{equation}\label{main_main_27} Q_{n,\frac{x}{\epsilon}}(C_n)\leq C\tilde{\kappa}_{n-\overline{m}}L_{n-\overline{m}}^{16a-\delta}.\end{equation}  The goal now is to estimate the expectation of the difference \begin{equation}\label{main_main_28} \abs{E^{Q_{n,\frac{x}{\epsilon}}}(-\epsilon^2\sum_{k=0}^{T^{\epsilon,n}_1-1}L_{n-\overline{m}}^2g(\epsilon X_k)-(-\epsilon^2\sum_{k=0}^{T^{\epsilon,n}_1-1}L_{n-\overline{m}}^2g(\epsilon \overline{X}_k)), T^{\epsilon,n}_1\leq(\frac{L_{n+2}}{L_{n-\overline{m}}})^2)}.\end{equation}  Form the decomposition with respect to the event $C_n$ and use the triangle inequality to obtain \begin{multline}\label{main_main_29} \abs{E^{Q_{n,\frac{x}{\epsilon}}}(-\epsilon^2\sum_{k=0}^{T^{\epsilon,n}_1-1}L_{n-\overline{m}}^2g(\epsilon X_k)-(-\epsilon^2\sum_{k=0}^{T^{\epsilon,n}_1-1}L_{n-\overline{m}}^2g(\epsilon \overline{X}_k)), T^{\epsilon,n}_1\leq(\frac{L_{n+2}}{L_{n-\overline{m}}})^2)}\leq \\ \abs{E^{Q_{n,\frac{x}{\epsilon}}}(-\epsilon^2\sum_{k=0}^{T^{\epsilon,n}_1-1}L_{n-\overline{m}}^2g(\epsilon X_k)-(-\epsilon^2\sum_{k=0}^{T^{\epsilon,n}_1-1}L_{n-\overline{m}}^2g(\epsilon \overline{X}_k)), T^{\epsilon,n}_1\leq(\frac{L_{n+2}}{L_{n-\overline{m}}})^2, C_n)}+ \\ \abs{E^{Q_{n,\frac{x}{\epsilon}}}(-\epsilon^2\sum_{k=0}^{T^{\epsilon,n}_1-1}L_{n-\overline{m}}^2g(\epsilon X_k)-(-\epsilon^2\sum_{k=0}^{T^{\epsilon,n}_1-1}L_{n-\overline{m}}^2g(\epsilon \overline{X}_k)), T^{\epsilon,n}_1\leq(\frac{L_{n+2}}{L_{n-\overline{m}}})^2, C_n^c)}.\end{multline}

The first term of (\ref{main_main_29}) is bounded using (\ref{main_main_27}) and the event $\left\{T^{\epsilon,n}_1\leq(\frac{L_{n+2}}{L_{n-\overline{m}}})^2)\right\}$, which imply, for $C>0$ independent of $n$, using the definitions of $L_n$ in (\ref{L}) and $\tilde{\kappa}_{n-\overline{m}}$ in (\ref{kappa}), \begin{multline}\label{main_main_30} \abs{E^{Q_{n,\frac{x}{\epsilon}}}(-\epsilon^2\sum_{k=0}^{T^{\epsilon,n}_1-1}L_{n-\overline{m}}^2g(\epsilon X_k)-(-\epsilon^2\sum_{k=0}^{T^{\epsilon,n}_1-1}L_{n-\overline{m}}^2g(\epsilon \overline{X}_k)), T^{\epsilon,n}_1\leq(\frac{L_{n+2}}{L_{n-\overline{m}}})^2, C_n)}\leq \\ C\norm{g}_{L^\infty(\mathbb{R}^d)}(\epsilon L_{n+2})^2\tilde{\kappa}_{n-\overline{m}}L_{n-\overline{m}}^{16a-\delta}\leq C\norm{g}_{L^\infty(\mathbb{R}^d)}L_n^{21a-\frac{\delta}{2}}, \end{multline} where the definitions (\ref{Holderexponent}) and (\ref{delta}) imply that the exponent $$21a-\frac{\delta}{2}<0$$ is negative.

The second term of (\ref{main_main_29}) is bounded using the Lipschitz continuity of $g$, the event bounding $T^{\epsilon,n}_1$ and the definition of $C_n^c$.  Namely, for $C>0$ independent of $n$, \begin{multline}\label{main_main_31}\abs{E^{Q_{n,\frac{x}{\epsilon}}}(-\epsilon^2\sum_{k=0}^{T^{\epsilon,n}_1-1}L_{n-\overline{m}}^2g(\epsilon X_k)-(-\epsilon^2\sum_{k=0}^{T^{\epsilon,n}_1-1}L_{n-\overline{m}}^2g(\epsilon \overline{X}_k)), T^{\epsilon,n}_1\leq(\frac{L_{n+2}}{L_{n-\overline{m}}})^2, C_n^c)}\leq \\ C\norm{Dg}_{L^\infty(\mathbb{R}^d)}(\epsilon L_{n+2})^2\epsilon L_{n-\overline{m}}\leq C\norm{Dg}_{L^\infty(\mathbb{R}^d)}L_n^{-5a},\end{multline} where the final inequality is obtained as in the arguments leading from (\ref{main_main_23}) to (\ref{main_main_24}).  Therefore, in view of (\ref{main_main_29}), estimates (\ref{main_main_30}) and (\ref{main_main_31}) combine to form the estimate, for $C>0$ independent of $n$, \begin{multline}\label{main_main_32} \abs{E^{Q_{n,\frac{x}{\epsilon}}}(-\epsilon^2\sum_{k=0}^{T^{\epsilon,n}_1-1}L_{n-\overline{m}}^2g(\epsilon X_k)-(-\epsilon^2\sum_{k=0}^{T^{\epsilon,n}_1-1}L_{n-\overline{m}}^2g(\epsilon \overline{X}_k)), T^{\epsilon,n}_1\leq(\frac{L_{n+2}}{L_{n-\overline{m}}})^2)}\leq \\ C\norm{g}_{L^\infty(\mathbb{R}^d)}L_n^{21a-\frac{\delta}{2}}+C\norm{Dg}_{L^\infty(\mathbb{R}^d)}L_n^{-5a},\end{multline} and complete the proof of (\ref{main_outline_5}).

\emph{The proof of (\ref{main_outline_6}).}  Recall the discrete exit time $$\overline{T}^{\epsilon,n}_2=\inf\left\{\;k\geq 0\;|\;(X_k,\overline{X}_k)\;\;\textrm{satisfies}\;\;\d(\overline{X}_k,(U/\epsilon))\geq \tilde{D}_{n-\overline{m}}\right\},$$ which is acts as $\tau^{\epsilon,n}_2$ for the second coordinate of the process $(X_k, \overline{X}_k)$.  The purpose now is to replace $T^{\epsilon,n}_1$ with $\overline{T}^{\epsilon,n}_2$ for the second term of the difference (\ref{main_main_32}).  First, an upper bound is imposed for $\overline{T}^{\epsilon,n}_2$, and the difference is bounded, for $C>0$ independent of $n$, by \begin{multline}\label{main_main_33}|E^{Q_{n,\frac{x}{\epsilon}}}(-\epsilon^2\sum_{k=0}^{T^{\epsilon,n}_1-1}L_{n-\overline{m}}^2g(\epsilon \overline{X}_k), T^{\epsilon,n}_1\leq (\frac{L_{n+2}}{L_{n-\overline{m}}})^2)- \\ E^{Q_{n,\frac{x}{\epsilon}}}(-\epsilon^2\sum_{k=0}^{T^{\epsilon,n}_1-1}L_{n-\overline{m}}^2g(\epsilon \overline{X}_k), T^{\epsilon,n}_1\leq (\frac{L_{n+2}}{L_{n-\overline{m}}})^2, \overline{T}^{\epsilon,n}_2\leq (\frac{L_{n+2}}{L_{n-\overline{m}}})^2)|\leq \\ C\norm{g}_{L^\infty(\mathbb{R}^d)}(\epsilon L_{n+2})^2Q_{n,\frac{x}{\epsilon}}(\overline{T}^{\epsilon,n}_2>(\frac{L_{n+2}}{L_{n-\overline{m}}})^2).\end{multline}  And, since line (\ref{exit_main_7}) of Proposition \ref{exit_main} applies to the discrete sequence and stopping time after increasing $R$ to $2R$ and increasing the constant, together with the definition of $Q_{n,\frac{x}{\epsilon}}$ and the stopping times, for $C>0$ independent of $n$,\begin{equation}\label{main_main_34}Q_{n,\frac{x}{\epsilon}}(\overline{T}^{\epsilon,n}_2>(\frac{L_{n+2}}{L_{n-\overline{m}}})^2)=W^{n-\overline{m}}_{\frac{x}{\epsilon}}(\tau^{\epsilon,n}_2> L_{n+2}^2)\leq C(\epsilon L_{n+2})^{-3}.\end{equation}  Therefore, for $C>0$ independent of $n$, the lefthand side of (\ref{main_main_33}) is bounded by \begin{equation}\label{main_main_35} C\norm{g}_{L^\infty(\mathbb{R}^d)}(\epsilon L_{n+2})^{-1}\leq C\norm{g}_{L^\infty(\mathbb{R}^d)}\frac{L_{n+1}}{L_{n+2}}.\end{equation}  Note that a better estimate can be achieved in (\ref{main_main_34}) for Brownian motion, however any improvement at this stage will not improve the overall rate of homogenization.

The next step replaces $T^{\epsilon,n}_1$ in the sum with $\overline{T}^{\epsilon,n}_2$ following a decomposition in terms of the event $C_n$ and an application of the triangle inequality.  Using (\ref{main_main_27}) and the bounds for the exit times, on the event $C_n$ the expectation of the difference \begin{multline}\label{main_main_36}|E^{Q_{n,\frac{x}{\epsilon}}}(-\epsilon^2\sum_{k=0}^{T^{\epsilon,n}_1-1}L_{n-\overline{m}}^2g(\epsilon \overline{X}_k)-(-\epsilon^2\sum_{k=0}^{\overline{T}^{\epsilon,n}_2-1}L_{n-\overline{m}}^2g(\epsilon \overline{X}_k)), \\ T^{\epsilon,n}_1\leq (\frac{L_{n+2}}{L_{n-\overline{m}}})^2, \overline{T}^{\epsilon,n}_2\leq (\frac{L_{n+2}}{L_{n-\overline{m}}})^2, C_n)|\end{multline} is bounded, for $C>0$ independent of $n$, by \begin{equation}\label{main_main_37} C(\epsilon L_{n+2})^2\norm{g}_{L^\infty(\mathbb{R}^d)}Q_{n,\frac{x}{\epsilon}}(C_n)\leq C\norm{g}_{L^\infty(\mathbb{R}^d)}(\frac{L_{n+2}}{L_n})^2\tilde{\kappa}_{n-\overline{m}}L_{n-\overline{m}}^{16a-\delta}\leq C\norm{g}_{L^\infty(\mathbb{R}^d)}L_n^{21a-\frac{\delta}{2}},\end{equation} where the final inequality is obtained identically to (\ref{main_main_30}).

It follows immediately from the definitions that $T^{\epsilon,n}_1\leq \overline{T}^{\epsilon,n}_2$ on the event \begin{equation}\label{main_main_38}\left\{T^{\epsilon,n}_1\leq (\frac{L_{n+2}}{L_{n-\overline{m}}})^2, \overline{T}^{\epsilon,n}_2\leq (\frac{L_{n+2}}{L_{n-\overline{m}}})^2, C_n^c\right\}.\end{equation}  Therefore, on the event $C_n^c$, the expectation of the difference \begin{multline}\label{main_main_39}|E^{Q_{n,\frac{x}{\epsilon}}}(-\epsilon^2\sum_{k=0}^{T^{\epsilon,n}_1-1}L_{n-\overline{m}}^2g(\epsilon \overline{X}_k)-(-\epsilon^2\sum_{k=0}^{\overline{T}^{\epsilon,n}_2-1}L_{n-\overline{m}}^2g(\epsilon \overline{X}_k)), \\ T^{\epsilon,n}_1\leq (\frac{L_{n+2}}{L_{n-\overline{m}}})^2, \overline{T}^{\epsilon,n}_2\leq (\frac{L_{n+2}}{L_{n-\overline{m}}})^2, C_n^c)|\end{multline} is bounded by \begin{equation}\label{main_main_40} (\epsilon L_{n-\overline{m}})^2\norm{g}_{L^\infty(\mathbb{R}^d)}E^{Q_{n,\frac{x}{\epsilon}}}(\overline{T}^{\epsilon,n}_2-T^{\epsilon,n}_1, T^{\epsilon,n}_1\leq (\frac{L_{n+2}}{L_{n-\overline{m}}})^2, \overline{T}^{\epsilon,n}_2\leq (\frac{L_{n+2}}{L_{n-\overline{m}}})^2, C_n^c).\end{equation}  And, on the event (\ref{main_main_38}), the definitions of $T^{\epsilon,n}_1$ and $C_n^c$ imply $$d(X_{T^{\epsilon,n}_1}, (U/\epsilon)^c)\leq \tilde{D}_{n-\overline{m}}\;\;\textrm{which guarantees}\;\;\d(\overline{X}_{T^{\epsilon,n}_1},(U/\epsilon)^c)\leq 2\tilde{D}_{n-\overline{m}},$$ and owing to the definition of $Q_{n,\frac{x}{\epsilon}}$, the Markov property, standard exponential estimates for Brownian motion \cite[Chapter~2, Proposition~1.8]{RY} and Proposition \ref{disc_u_scale}, using the definitions of (\ref{L}), (\ref{kappa}) and (\ref{D}), observe that for $C>0$ independent of $n$, \begin{multline}\label{main_main_41}E^{Q_{n,\frac{x}{\epsilon}}}(\overline{T}^{\epsilon,n}_2-T^{\epsilon,n}_1, T^{\epsilon,n}_1\leq (\frac{L_{n+2}}{L_{n-\overline{m}}})^2, \overline{T}^{\epsilon,n}_2\leq (\frac{L_{n+2}}{L_{n-\overline{m}}})^2, C_n^c)\leq \\ \sup_{\d(y,(U/\epsilon)^c)\leq 2\tilde{D}_{n-\overline{m}}}E^{W^{n-\overline{m}}_y}(\frac{\tau^{\epsilon,n}_2}{L_{n-\overline{m}}^2})\leq C\frac{\tilde{\kappa}_{n-\overline{m}}}{\epsilon L_{n-\overline{m}}}.\end{multline}  Therefore, combining (\ref{main_main_39}), (\ref{main_main_40}) and (\ref{main_main_41}) and using $L_n\leq \frac{1}{\epsilon}<L_{n+1}$, for $C>0$ independent of $n$, \begin{multline}\label{main_main_42}|E^{Q_{n,\frac{x}{\epsilon}}}(-\epsilon^2\sum_{k=0}^{T^{\epsilon,n}_1-1}L_{n-\overline{m}}^2g(\epsilon \overline{X}_k)-(-\epsilon^2\sum_{k=0}^{\overline{T}^{\epsilon,n}_2-1}L_{n-\overline{m}}^2g(\epsilon \overline{X}_k)), \\ T^{\epsilon,n}_1\leq (\frac{L_{n+2}}{L_{n-\overline{m}}})^2, \overline{T}^{\epsilon,n}_2\leq (\frac{L_{n+2}}{L_{n-\overline{m}}})^2, C_n^c)|\leq C\norm{g}_{L^\infty(\mathbb{R}^d)}\frac{\tilde{D}_{n-\overline{m}}}{L_n}.\end{multline}  And, with (\ref{main_main_36}), (\ref{main_main_37}) and (\ref{main_main_42}), conclude using the triangle inequality that, for $C>0$ independent of $n$, \begin{multline}\label{main_main_43}|E^{Q_{n,\frac{x}{\epsilon}}}(-\epsilon^2\sum_{k=0}^{T^{\epsilon,n}_1-1}L_{n-\overline{m}}^2g(\epsilon \overline{X}_k)-(-\epsilon^2\sum_{k=0}^{\overline{T}^{\epsilon,n}_2-1}L_{n-\overline{m}}^2g(\epsilon \overline{X}_k)),T^{\epsilon,n}_1\leq (\frac{L_{n+2}}{L_{n-\overline{m}}})^2, \overline{T}^{\epsilon,n}_2\leq (\frac{L_{n+2}}{L_{n-\overline{m}}})^2)| \\ \leq C\norm{g}_{L^\infty(\mathbb{R}^d)}(L_n^{21a-\frac{\delta}{2}}+\frac{\tilde{D}_{n-\overline{m}}}{L_n}).\end{multline}

Finally, analogously to the arguments (\ref{main_main_33}) to (\ref{main_main_35}), for $C>0$ independent of $n$, \begin{multline}\label{main_main_44}|E^{Q_{n,\frac{x}{\epsilon}}}(-\epsilon^2\sum_{k=0}^{\overline{T}^{\epsilon,n}_2-1}L_{n-\overline{m}}^2g(\epsilon \overline{X}_k), T^{\epsilon,n}_1\leq (\frac{L_{n+2}}{L_{n-\overline{m}}})^2, \overline{T}^{\epsilon,n}_2\leq (\frac{L_{n+2}}{L_{n-\overline{m}}})^2)- \\ E^{Q_{n,\frac{x}{\epsilon}}}(-\epsilon^2\sum_{k=0}^{\overline{T}^{\epsilon,n}_2-1}L_{n-\overline{m}}^2g(\epsilon \overline{X}_k), \overline{T}^{\epsilon,n}_2\leq (\frac{L_{n+2}}{L_{n-\overline{m}}})^2)|\leq \\ C\norm{g}_{L^\infty(\mathbb{R}^d)}(\epsilon L_{n+2})^2Q_{n,\frac{x}{\epsilon}}(T^{\epsilon,n}_1>(\frac{L_{n+2}}{L_{n-\overline{m}}})^2).\end{multline}  Since the exponential estimates implied by Control \ref{main_localization} and Proposition \ref{exit_main}, and in particular line (\ref{exit_main_5}), yield, for $C>0$ independent of $n$, $$Q_{n,\frac{x}{\epsilon}}(T^{\epsilon,n}_1>(\frac{L_{n+2}}{L_{n-\overline{m}}})^2)\leq C(\epsilon L_{n+2})^{-3},$$ the lefthand side of (\ref{main_main_44}) is bounded, for $C>0$ independent of $n$, by \begin{equation}\label{main_main_45} C\norm{g}_{L^\infty(\mathbb{R}^d)}(\epsilon L_{n+2})^{-1}\leq \norm{g}_{L^\infty(\mathbb{R}^d)}\frac{L_{n+1}}{L_{n+2}}.\end{equation}  In total then, the collection (\ref{main_main_35}), (\ref{main_main_43}) and (\ref{main_main_45}) yield, for $C>0$ independent of $n$, \begin{multline}\label{main_main_46} |E^{Q_{n,\frac{x}{\epsilon}}}(-\epsilon^2\sum_{k=0}^{T^{\epsilon,n}_1-1}L_{n-\overline{m}}^2g(\epsilon \overline{X}_k), T^{\epsilon,n}_1\leq (\frac{L_{n+2}}{L_{n-\overline{m}}})^2)- \\ E^{Q_{n,\frac{x}{\epsilon}}}(-\epsilon^2\sum_{k=0}^{\overline{T}^{\epsilon,n}_2-1}L_{n-\overline{m}}^2g(\epsilon \overline{X}_k), \overline{T}^{\epsilon,n}_2\leq (\frac{L_{n+2}}{L_{n-\overline{m}}})^2)|\leq \\ C\norm{g}_{L^\infty(\mathbb{R}^d)}(L_n^{21a-\frac{\delta}{2}}+\frac{\tilde{D}_{n-\overline{m}}}{L_n}+\frac{L_{n+1}}{L_{n+2}}),\end{multline} and complete the proof of (\ref{main_outline_6}).

The definition of $Q_{n,\frac{x}{\epsilon}}$ and the Markov property imply \begin{multline}\label{main_main_47} E^{Q_{n,\frac{x}{\epsilon}}}(-\epsilon^2\sum_{k=0}^{\overline{T}^{\epsilon,n}_2-1}L_{n-\overline{m}}^2g(\epsilon \overline{X}_k), \overline{T}^{\epsilon,n}_2\leq (\frac{L_{n+2}}{L_{n-\overline{m}}})^2)= \\ E^{W^{n-\overline{m}}_{\frac{x}{\epsilon}}}(-\epsilon^2\sum_{k=0}^{(\tau^{\epsilon,n}_2-L_{n-\overline{m}}^2)/L_{n-\overline{m}}^2}L_{n-\overline{m}}^2g(\epsilon X_{kL_{n-\overline{m}}^2}), \tau^{\epsilon,n}_2\leq L_{n+2}^2).\end{multline}  Therefore, to recap the progress, the collection of estimates (\ref{main_main_26}), (\ref{main_main_32}), (\ref{main_main_46}) and (\ref{main_main_47}) produce the bound, for $C>0$ independent of $n$, \begin{multline}\label{main_main_48} \abs{u^\epsilon(x)-E^{W^{n-\overline{m}}_{\frac{x}{\epsilon}}}(-\epsilon^2\sum_{k=0}^{(\tau^{\epsilon,n}_2-L_{n-\overline{m}}^2)/L_{n-\overline{m}}^2}L_{n-\overline{m}}^2g(\epsilon X_{kL_{n-\overline{m}}^2}), \tau^{\epsilon,n}_2\leq L_{n+2}^2)}\leq \\ C\norm{g}_{L^\infty(\mathbb{R}^d)}(\frac{L_{n+1}}{L_{n+2}}+(\frac{L_{n-1}}{L_n})^2+L_n^{4a+2a^2-\frac{10a}{1+a}}+L_n^{21a-\frac{\delta}{2}}+\frac{\tilde{D}_{n-\overline{m}}}{L_n}+L_n^{-5a}) \\+ C\norm{Dg}_{L^\infty(\mathbb{R}^d)}L_n^{-5a}.\end{multline}  It remains to recover the integral with respect to Brownian motion from its discrete approximation.

\emph{The proof of (\ref{main_outline_8}).}  The proof follows, in reverse order, the arguments leading to the proof of (\ref{main_outline_5}) from (\ref{main_outline_1}).  Observe that, for $C>0$ independent of $n$, \begin{multline}\label{main_main_49}\abs{E^{W^{n-\overline{m}}_{\frac{x}{\epsilon}}}(-\epsilon^2\sum_{k=0}^{(\tau^{\epsilon,n}_2-L_{n-\overline{m}}^2)/L_{n-\overline{m}}^2}L_{n-\overline{m}}^2g(\epsilon X_{kL_{n-\overline{m}}^2})-(-\epsilon^2\int_0^{\tau^{\epsilon,n}_2}g(\epsilon X_s)\;ds), \tau^{\epsilon,n}_2\leq L_{n+2}^2)} \\ \leq C(\epsilon L_{n-\overline{m}})^2\norm{g}_{L^\infty(\mathbb{R}^d)}E^{W^{n-\overline{m}}_{\frac{x}{\epsilon}}}(\sum_{k=0}^{(\tau^{\epsilon,n}_2-L_{n-\overline{m}}^2)/L_{n-\overline{m}}^2}W^{n-\overline{m}}_{X_{kL_{n-\overline{m}}^2}}(X^*_{L_{n-\overline{m}}^2}>\tilde{D}_{n-\overline{m}}), \tau^{\epsilon,n}_2\leq L_{n+2}^2)+\\ (\epsilon L_{n-\overline{m}})^2\norm{Dg}_{L^\infty(\mathbb{R}^d)}\epsilon\tilde{D}_{n-\overline{m}}E^{W^{n-\overline{m}}_{\frac{x}{\epsilon}}}(\sum_{k=0}^{(\tau^{\epsilon,n}_2-L_{n-\overline{m}}^2)/L_{n-\overline{m}}^2}W^{n-\overline{m}}_{X_{kL_{n-\overline{m}}^2}}(X^*_{L_{n-\overline{m}}^2}\leq \tilde{D}_{n-\overline{m}}), \tau^{\epsilon,n}_2\leq L_{n+2}^2).\end{multline}  Standard exponential estimates for Brownian motion, see \cite[Chapter~2, Proposition~1.8]{RY}, imply (again, a better estimate is possible, but to no improvement of the rate) that the first term of (\ref{main_main_49}) is bounded, using $L_n\leq\frac{1}{\epsilon}<L_{n+1}$, for $C>0$ independent of $n$, by \begin{equation}\label{main_main_50} C\norm{g}_{L^\infty(\mathbb{R}^d)}(\epsilon L_{n+2})^2\exp(-\tilde{\kappa}_{n-\overline{m}})\leq C\norm{g}_{L^\infty(\mathbb{R}^d)}(\frac{L_{n+2}}{L_n})^2\exp(-\tilde{\kappa}_{n-\overline{m}}).\end{equation}  The $L^\infty$-estimate implied by the upper bound on $\tau^{\epsilon,n}_2$ ensures that the second term of (\ref{main_main_49}) is bounded, for $C>0$ independent of $n$, by \begin{equation}\label{main_main_51} C\norm{Dg}_{L^\infty(\mathbb{R}^d)}\epsilon^3L_{n+2}^2\tilde{D}_{n-\overline{m}}\leq C\norm{Dg}_{L^\infty(\mathbb{R}^d)}L_n^{-5a}, \end{equation} where the final inequality is obtained identically as in the arguments leading from (\ref{main_main_23}) to (\ref{main_main_24}).

In combination, lines (\ref{main_main_50}) and (\ref{main_main_51}) bound the lefthand side of (\ref{main_main_49}), for $C>0$ independent of $n$, by \begin{multline}\label{main_main_52}\abs{E^{W^{n-\overline{m}}_{\frac{x}{\epsilon}}}(-\epsilon^2\sum_{k=0}^{(\tau^{\epsilon,n}_2-L_{n-\overline{m}}^2)/L_{n-\overline{m}}^2}L_{n-\overline{m}}^2g(\epsilon X_{kL_{n-\overline{m}}^2})-(-\epsilon^2\int_0^{\tau^{\epsilon,n}_2}g(\epsilon X_s)\;ds), \tau^{\epsilon,n}_2\leq L_{n+2}^2)} \\ \leq C\norm{g}_{L^\infty(\mathbb{R}^d)}(\frac{L_{n+2}}{L_n})^2\exp(-\tilde{\kappa}_{n-\overline{m}})+C\norm{Dg}_{L^\infty(\mathbb{R}^d)}L_n^{-5a},\end{multline} and complete the proof of (\ref{main_outline_8}).

\emph{The proof of (\ref{main_outline_10}).}  Recall that $\tau^\epsilon$ denotes the exit time from $U/\epsilon$.  Observe by using Theorem \ref{effectivediffusivity}, line (\ref{exit_main_7}) from Proposition \ref{exit_main} and $L_n\leq\frac{1}{\epsilon}<L_{n+1}$ that, for $C>0$ independent of $n$, \begin{multline}\label{main_main_53} |E^{W^{n-\overline{m}}_{\frac{x}{\epsilon}}}(-\epsilon^2\int_0^{\tau^{\epsilon,n}_2}g(\epsilon X_s)\;ds, \tau^{\epsilon,n}_2\leq L_{n+2}^2)- \\ E^{W^{n-\overline{m}}_{\frac{x}{\epsilon}}}(-\epsilon^2\int_0^{\tau^{\epsilon,n}_2}g(\epsilon X_s)\;ds, \tau^{\epsilon,n}_2\leq L_{n+2}^2, \tau^\epsilon\leq L_{n+2}^2)|\leq \\ C\norm{g}_{L^\infty(\mathbb{R}^d)}(\epsilon L_{n+2})^2W^{n-\overline{m}}_{\frac{x}{\epsilon}}(\tau^\epsilon>L_{n+2}^2)\leq \\ C\norm{g}_{L^\infty(\mathbb{R}^d)}(\epsilon L_{n+2})^{-1}\leq C\norm{g}_{L^\infty(\mathbb{R}^d)}\frac{L_{n+1}}{L_{n+2}}.\end{multline}  Of course, estimate (\ref{main_main_53}) can be improved for Brownian motion, but what is written is sufficient and does not negatively effect the rate to be obtained in Section \ref{section_rate}.

Since the definitions imply $\tau^\epsilon\leq \tau^{\epsilon,n}_2$, the Markov property, Corollary \ref{disc_u_scale}, standard exponential estimates for Brownian motion, see  \cite[Chapter~2, Proposition~1.8]{RY}, and $L_n\leq \frac{1}{\epsilon}<L_{n+1}$ then bound the expectation of the difference, for $C>0$ independent of $n$, by \begin{multline}\label{main_main_55} \abs{E^{W^{n-\overline{m}}_{\frac{x}{\epsilon}}}(-\epsilon^2\int_{\tau^\epsilon}^{\tau^{\epsilon,n}_2}g(\epsilon X_s)\;ds, \tau^{\epsilon,n}_2\leq L_{n+2}^2, \tau^\epsilon\leq L_{n+2}^2)}\leq \\ \epsilon^2\norm{g}_{L^\infty(\mathbb{R}^d)}\sup_{y\in\partial U}E^{W^{n-\overline{m}}_y}(\tau^{\epsilon,n}_2) \leq C\norm{g}_{L^\infty(\mathbb{R}^d)}\epsilon\tilde{D}_{n-\overline{m}}\leq C\norm{g}_{L^\infty(\mathbb{R}^d)}\frac{\tilde{D}_{n-\overline{m}}}{L_n}.\end{multline}  Then, again using line (\ref{exit_main_7}) of Proposition \ref{exit_main} (after replacing $R$ with $2R$ and increasing the constant) and standard exponential estimates for Brownian motion, see \cite[Chapter~2, Proposition~1.8]{RY}, \begin{multline}\label{main_main_56} |E^{W^{n-\overline{m}}_{\frac{x}{\epsilon}}}(-\epsilon^2\int_0^{\tau^\epsilon}g(\epsilon X_s)\;ds, \tau^{\epsilon,n}_2\leq L_{n+2}^2, \tau^\epsilon\leq L_{n+2}^2)- \\ E^{W^{n-\overline{m}}_{\frac{x}{\epsilon}}}(-\epsilon^2\int_0^{\tau^\epsilon}g(\epsilon X_s)\;ds, \tau^\epsilon\leq L_{n+2}^2)|\leq \\ C\norm{g}_{L^\infty(\mathbb{R}^d)}(\epsilon L_{n+2})^2W^{n-\overline{m}}_{\frac{x}{\epsilon}}(\tau^{\epsilon,n}_2>L_{n+2}^2)\leq \\ C\norm{g}_{L^\infty(\mathbb{R}^d)}(\epsilon L_{n+2})^{-1}\leq C\norm{g}_{L^\infty(\mathbb{R}^d)}\frac{L_{n+1}}{L_{n+2}}.\end{multline}  As before, estimate (\ref{main_main_55}) is not optimal for Brownian motion, but is sufficient and does not negatively impact the rate.

It remains only to estimate the difference \begin{equation}\label{main_main_57} \abs{E^{W^{n-\overline{m}}_{\frac{x}{\epsilon}}}(-\epsilon^2\int_0^{\tau^\epsilon}g(\epsilon X_s)\;ds, \tau^\epsilon\leq L_{n+2}^2)-E^{W^{n-\overline{m}}_{\frac{x}{\epsilon}}}(-\epsilon^2\int_0^{\tau^\epsilon}g(\epsilon X_s)\;ds)}.\end{equation}  Since the control of the $\alpha_n$ implied by Theorem \ref{effectivediffusivity} implies Proposition \ref{exit_main} applies equally to Brownian motion (though, as before, a better estimate can be obtained, but to no effect on the rate), it follows as in (\ref{main_main_2}), for $C>0$ independent of $n$, \begin{multline}\label{main_main_58} \abs{E^{W^{n-\overline{m}}_{\frac{x}{\epsilon}}}(-\epsilon^2\int_0^{\tau^\epsilon}g(\epsilon X_s)\;ds, \tau^\epsilon\leq L_{n+2}^2)-E^{W^{n-\overline{m}}_{\frac{x}{\epsilon}}}(-\epsilon^2\int_0^{\tau^\epsilon}g(\epsilon X_s)\;ds)}\leq \\ C\epsilon^2\norm{g}_{L^\infty(\mathbb{R}^d)}\sum_{k=1}^\infty(k+1)L_{n+2}^2W^{n-\overline{m}}_{\frac{x}{\epsilon}}(\tau^\epsilon> kL_{n+2}^2)\leq \\ C\norm{g}_{L^\infty(\mathbb{R}^d)}(\epsilon L_{n+2})^{-1}\leq C\norm{g}_{L^\infty(\mathbb{R}^d)}\frac{L_{n+1}}{L_{n+2}}.\end{multline}  Therefore, in view of (\ref{main_main_55}), (\ref{main_main_56}) and (\ref{main_main_58}), for $C>0$ independent of $n$, \begin{multline}\label{main_main_59} \abs{E^{W^{n-\overline{m}}_{\frac{x}{\epsilon}}}(-\epsilon^2\int_0^{\tau^{\epsilon,n}_2}g(\epsilon X_s)\;ds, \tau^{\epsilon,n}_2\leq L_{n+2}^2)-E^{W^{n-\overline{m}}_{\frac{x}{\epsilon}}}(-\epsilon^2\int_0^{\tau^\epsilon}g(\epsilon X_s)\;ds)}\leq \\ C\norm{g}_{L^\infty(\mathbb{R}^d)}(\frac{\tilde{D}_{n-\overline{m}}}{L_{n}}+\frac{L_{n+1}}{L_{n+2}}),\end{multline} which completes the proof of (\ref{main_outline_10}).

\emph{Conclusion.}  Finally, writing $\overline{u}$ and $\overline{u}_{n-\overline{m}}$ for the respective solutions of (\ref{main_hom_1}) and (\ref{main_outline_11}), Proposition \ref{main_alpha} implies that, for $C>0$ independent of $n$, \begin{multline}\label{main_main_60}\abs{E^{W^{n-\overline{m}}_{\frac{x}{\epsilon}}}(-\epsilon^2\int_0^{\tau^\epsilon}g(\epsilon X_s)\;ds)-E^{W^\infty_{\frac{x}{\epsilon}}}(-\epsilon^2\int_0^{\tau^\epsilon}g(\epsilon X_s)\;ds)}=\abs{\overline{u}_{n-\overline{m}}(x)-\overline{u}(x)}\leq \\ C\norm{g}_{L^\infty(\overline{U})}L_{n-\overline{m}}^{-(1+\frac{9}{10})\delta}.\end{multline}  And, since there exists $C>0$ independent of $n\geq\overline{m}$ such that $$(\frac{L_{n+2}}{L_n})^2\exp(-\tilde{\kappa}_{n-\overline{m}})\leq CL_{n-\overline{m}}^{-(1+\frac{9}{10})\delta},$$ the combination of (\ref{main_main_48}), (\ref{main_main_52}), (\ref{main_main_59}) and (\ref{main_main_60}) results in the estimate, for $C>0$ independent of $n$, \begin{multline}\label{main_main_61}  \abs{u^\epsilon(x)-\overline{u}(x)}\leq \\ C\norm{g}_{L^\infty(\mathbb{R}^d)}(L_{n-\overline{m}}^{-(1+\frac{9}{10})\delta}+\frac{L_{n+1}}{L_{n+2}}+(\frac{L_{n-1}}{L_n})^2+L_n^{4a+2a^2-\frac{10a}{1+a}}+L_n^{21a-\frac{\delta}{2}}+\frac{\tilde{D}_{n-\overline{m}}}{L_n}+L_n^{-5a}) \\+ C\norm{Dg}_{L^\infty(\mathbb{R}^d)}L_n^{-5a}.\end{multline}  Since definitions (\ref{Holderexponent}), (\ref{L}), (\ref{D}) and (\ref{delta}) imply the righthand side of (\ref{main_main_61}) vanishes as $n$ approaches infinity, since $n$ approaches infinity and $\epsilon$ approaches zero, and since $\omega\in\Omega_0$ and $x\in \overline{U}$ were arbitrary, this completes the argument.  \end{proof}

It remains to extend Theorem \ref{main_main} to a general continuous righthand side, which follows from a standard extension argument.  \begin{equation}\label{main_continuous}\textrm{Assume}\;\;g\in\C(\overline{U}).\end{equation}  Notice that the approximation argument relies upon the result of Theorem \ref{main_main} for $g=-1$.  That is, it relies upon the fact that Theorem \ref{main_main} already contains an almost sure control of the exit time in expectation.

\begin{thm}\label{main_arbitrary} Assume (\ref{steady}), (\ref{constants}) and (\ref{main_continuous}).  For every $\omega\in \Omega_0$, the respective solutions $u^\epsilon$ and $\overline{u}$ of (\ref{main_eq_1}) and (\ref{main_hom_1}) satisfy $$\lim_{\epsilon\rightarrow 0}\norm{u^\epsilon-\overline{u}}_{L^\infty(\overline{U})}=0.$$\end{thm}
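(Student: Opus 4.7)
The plan is a standard density argument, where the only nontrivial ingredient is that the $g \equiv -1$ case of Theorem \ref{main_main} already provides an almost sure uniform bound on $\epsilon^2 E_{x/\epsilon,\omega}(\tau^\epsilon)$ for $\omega \in \Omega_0$ and $\epsilon$ small. Given $g \in C(\overline{U})$, I would first extend $g$ to a bounded continuous function on $\mathbb{R}^d$ by Tietze's theorem and then mollify and multiply by a smooth cutoff equal to $1$ on a neighborhood of $\overline{U}$ to obtain a sequence $g_n \in C^\infty_c(\mathbb{R}^d)$ satisfying $\norm{g_n - g}_{L^\infty(\overline{U})} \to 0$. Denote by $u^\epsilon_n$ and $\overline{u}_n$ the respective solutions of (\ref{main_eq_1}) and (\ref{main_hom_1}) with righthand side $g_n$, to which Theorem \ref{main_main} already applies.

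By linearity, both differences $u^\epsilon - u^\epsilon_n$ and $\overline{u} - \overline{u}_n$ solve the relevant equations with zero boundary data and righthand side $g - g_n$. The probabilistic representations yield the pointwise bounds
$$\abs{u^\epsilon(x) - u^\epsilon_n(x)} \leq \norm{g - g_n}_{L^\infty(\overline{U})} \cdot \epsilon^2 E_{x/\epsilon,\omega}(\tau^\epsilon),$$
$$\abs{\overline{u}(x) - \overline{u}_n(x)} \leq \norm{g - g_n}_{L^\infty(\overline{U})} \cdot \sup_{y \in \overline{U}} E^{W^\infty_y}(\tau_U),$$
where $\tau_U$ is the Brownian exit time from $U$. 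The second supremum is controlled by a constant $C(U,\overline{\alpha})$ via standard exit-time estimates for Brownian motion in a bounded domain. The main obstacle is the first bound, for which I need a uniform-in-$\epsilon$ estimate of $\epsilon^2 E_{x/\epsilon,\omega}(\tau^\epsilon)$ that holds for $\omega \in \Omega_0$ on a subset of full probability.

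To supply this bound I would apply Theorem \ref{main_main} to a fixed cutoff $h \in C^\infty_c(\mathbb{R}^d)$ with $h \equiv -1$ on $\overline{U}$. Since the associated diffusion remains in $U$ where $h \equiv -1$ until time $\tau^\epsilon$, the corresponding solution of (\ref{main_eq_1}) equals $\epsilon^2 E_{x/\epsilon,\omega}(\tau^\epsilon)$ on $\overline{U}$ and by Theorem \ref{main_main} converges uniformly there to the solution $\overline{w}$ of $\frac{\overline{\alpha}}{2}\Delta \overline{w} = -1$ in $U$ with $\overline{w} = 0$ on $\partial U$. Since $\norm{\overline{w}}_{L^\infty(\overline{U})} \leq C(U,\overline{\alpha})$, for every $\omega \in \Omega_0$ there exists $\epsilon_0 = \epsilon_0(\omega) > 0$ such that
$$\sup_{x \in \overline{U}} \epsilon^2 E_{x/\epsilon,\omega}(\tau^\epsilon) \leq 2C(U,\overline{\alpha}) \;\; \textrm{for all}\;\; 0 < \epsilon < \epsilon_0(\omega).$$

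To conclude, fix $\omega \in \Omega_0$ and $\delta > 0$. Choose $n$ large enough that $\norm{g - g_n}_{L^\infty(\overline{U})} < \delta$, and then $\epsilon$ small enough (depending on $\omega$ and $n$) that the displayed exit-time bound holds and simultaneously $\norm{u^\epsilon_n - \overline{u}_n}_{L^\infty(\overline{U})} < \delta$ by Theorem \ref{main_main}. A triangle inequality then yields $\norm{u^\epsilon - \overline{u}}_{L^\infty(\overline{U})} \leq (2C(U,\overline{\alpha}) + C(U,\overline{\alpha}) + 1)\delta$, which, since $\delta > 0$ is arbitrary and $\Omega_0$ has full probability, completes the proof.
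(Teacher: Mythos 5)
Your proposal is correct and takes essentially the same approach as the paper: Tietze extension plus mollification to reduce to $g\in C^\infty_c(\mathbb{R}^d)$, followed by the observation that the $g\equiv -1$ case of Theorem \ref{main_main} already gives an almost sure uniform-in-$\epsilon$ control of $\sup_{x\in\overline{U}}\epsilon^2E_{x/\epsilon,\omega}(\tau^\epsilon)$, and then a triangle inequality. The only cosmetic difference is that you extract an explicit $\epsilon$-uniform bound on the exit time before passing to the limit, whereas the paper works directly with $\limsup_{\epsilon\to0}$ and the convergence of the exit-time expectation.
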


\begin{proof} Use the Tietze Extension Theorem, see for example Armstrong \cite[Theorem~2.15]{Armstrong}, to construct an extension with compact support $$\tilde{g}\in\BUC(\mathbb{R}^d)\;\;\textrm{satisfying}\;\;\tilde{g}|_{\overline{U}}=g.$$  By convolution construct, for each $\delta>0$, a $\tilde{g}^\delta\in\C^\infty_c(\mathbb{R}^d)$ such that $$\norm{\tilde{g}^\delta-\tilde{g}}_{L^\infty(\mathbb{R}^d)}\leq \delta,$$ and write $u^{\epsilon,\delta}$ for the solution $$\left\{\begin{array}{ll} \frac{1}{2}\tr(A(\frac{x}{\epsilon},\omega)D^2 u^{\epsilon,\delta})+\frac{1}{\epsilon}b(\frac{x}{\epsilon},\omega)\cdot Du^{\epsilon,\delta}=\tilde{g}^\delta(x) & \textrm{on}\;\;U, \\ u^{\epsilon,\delta}=0 & \textrm{on}\;\;\partial U.\end{array}\right.$$  Similarly, write $\overline{u}^\delta$ for the solution $$\left\{\begin{array}{ll} \frac{\overline{\alpha}}{2}\Delta\overline{u}^{\delta}=\tilde{g}^\delta(x) & \textrm{on}\;\;U, \\ \overline{u}^\delta=0 & \textrm{on}\;\;\partial U.\end{array}\right.$$

The representation formula for the solutions, the comparison principle and the triangle inequality imply that, writing $\tau^\epsilon$ for the exit time time from $U/\epsilon$ and $\tau_U$ for the exit time from $U$, for each $\omega\in \Omega$, $\delta>0$ and $\epsilon>0$, \begin{multline*}\norm{u^\epsilon-\overline{u}}_{L^\infty(\overline{U})}\leq \norm{u^\epsilon-u^{\epsilon,\delta}}_{L^\infty(\overline{U})}+\norm{u^{\epsilon,\delta}-\overline{u}^\delta}_{L^\infty(\overline{U})}+\norm{\overline{u}^\delta-\overline{u}}_{L^\infty(\overline{U})}\leq \\ \delta\sup_{x\in \overline{U}}E_{\frac{x}{\epsilon},\omega}(\epsilon^2\tau^\epsilon)+\delta\sup_{x\in \overline{U}}E^{W^\infty_x}(\tau_U)+\norm{u^{\epsilon,\delta}-\overline{u}^\delta}_{L^\infty(\overline{U})}.\end{multline*}  Therefore, since Theorem \ref{main_main} implies that $$\lim_{\epsilon\rightarrow 0}(\sup_{x\in\overline{U}}E_{\frac{x}{\epsilon},\omega}(\epsilon^2\tau^\epsilon))=\sup_{x\in \overline{U}}E^{W^\infty_x}(\tau_U),$$ and because $\tilde{g}^\delta$ satisfies the conditions of Theorem \ref{main_main}, for every $\omega\in\Omega_0$, for every $\delta>0$, $$\limsup_{\epsilon\rightarrow 0}\norm{u^\epsilon-\overline{u}}_{L^\infty(\overline{U})}\leq 2\delta \sup_{x\in \overline{U}}E^{W^\infty_x}(\tau_U)+\limsup_{\epsilon\rightarrow 0}\norm{u^{\epsilon,\delta}-\overline{u}^\delta}_{L^\infty(\overline{U})}=2\delta \sup_{x\in \overline{U}}E^{W^\infty_x}(\tau_U),$$ and this, since $\delta>0$ is arbitrary, completes the argument.  \end{proof}

The general homogenization statement for nonzero boundary data is now presented, after recalling the result of \cite{F3}.  The purpose will be to show that, on the event $\Omega_0$, solutions \begin{equation}\label{main_final_eq} \left\{\begin{array}{ll} \frac{1}{2}\tr(A(\frac{x}{\epsilon},\omega)D^2u^\epsilon)+\frac{1}{\epsilon}b(\frac{x}{\epsilon},\omega)\cdot Du^\epsilon=g(x) & \textrm{on}\;\;U, \\ u^\epsilon=f(x) & \textrm{on}\;\;\partial U,\end{array}\right.\end{equation} converge, as $\epsilon\rightarrow 0$ and uniformly on $\overline{U}$, to the solution \begin{equation}\label{main_final_hom} \left\{\begin{array}{ll} \frac{\overline{\alpha}}{2}\Delta \overline{u}=g(x) & \textrm{on}\;\;U, \\ \overline{u}=f(x) & \textrm{on}\;\;\partial U,\end{array}\right.\end{equation} whenever the righthand side and boundary data are continuous.  \begin{equation}\label{main_final_continuous} \textrm{Assume}\;g\in\C(\overline{U})\;\textrm{and}\;f\in \C(\partial U).\end{equation}  Notice that, in the case $g=0$, the variance $\overline{\alpha}$ does not effect the exit distribution because it reflects only a time change of the underlying Brownian motion.  Or, in terms of the equation, for each $n\geq 0$, the solution to the approximate homogenized problem \begin{equation}\label{main_final_approx} \left\{\begin{array}{ll} \frac{\alpha_n}{2}\Delta \overline{u}_n=g(x) & \textrm{on}\;\;U, \\ \overline{u}_n=f(x) & \textrm{on}\;\;\partial U,\end{array}\right.\end{equation} satisfies (\ref{main_final_hom}) whenever $g=0$.

\begin{prop}\label{main_bull}  Assume (\ref{steady}), (\ref{constants}) and $g=0$.  For each $n\geq 0$, for $\overline{u}$ and $\overline{u}_n$ the respective solutions of (\ref{main_final_eq}) and (\ref{main_final_approx}), $$\overline{u}=\overline{u}_n\;\;\textrm{on}\;\;\overline{U}.$$\end{prop}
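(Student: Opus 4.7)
The plan is to give a short argument based on the fact that when $g=0$, both $\overline{u}$ and $\overline{u}_n$ satisfy the Dirichlet problem for the Laplace equation with boundary data $f$, and the positive multiplicative constants $\overline{\alpha}$ and $\alpha_n$ are irrelevant. In a single sentence: dividing the equations in (\ref{main_final_hom}) and (\ref{main_final_approx}) by $\overline{\alpha}/2$ and $\alpha_n/2$ respectively shows that both functions satisfy $\Delta v = 0$ on $U$ with $v = f$ on $\partial U$, and the uniqueness of the bounded continuous solution to this Dirichlet problem with continuous boundary data on a domain satisfying the exterior ball condition (\ref{exterior}) then yields $\overline{u} = \overline{u}_n$ on $\overline{U}$.

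Alternatively, I would justify the same equality probabilistically, which matches the spirit of the paper. Using the representation formulas from the subsection on existence and uniqueness, for each $x \in \overline{U}$,
$$\overline{u}(x) = E^{W^\infty_x}(f(X_{\tau_U})) \quad \textrm{and} \quad \overline{u}_n(x) = E^{W^n_x}(f(X_{\tau_U})),$$
where $W^\infty_x$ and $W^n_x$ denote Brownian motions with respective variances $\overline{\alpha}$ and $\alpha_n$ starting at $x$. A Brownian motion with variance $\sigma^2$ is obtained from a standard one $B_t$ via the time-change $\sigma B_{t}$, or equivalently from a unit-variance Brownian motion via rescaling time by a positive constant. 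Since this time-change does not alter the path's trace in $\mathbb{R}^d$ and hence does not alter the exit point $X_{\tau_U}$ (only the exit time), the laws of $X_{\tau_U}$ under $W^\infty_x$ and $W^n_x$ coincide. The displayed expectations therefore agree, and the proposition follows.

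The only step requiring any real care is the verification that the Dirichlet problem for the Laplacian has a unique bounded continuous solution on $\overline{U}$ with the given continuous boundary data; this is standard under (\ref{domain_bounded}) and (\ref{exterior}), since each boundary point of $U$ is regular for the Laplacian by the exterior ball condition, and indeed the well-posedness was already invoked in the subsection ``A Remark on Existence and Uniqueness'' of Section \ref{preliminaries}. There is no serious obstacle here; the entire content of the proposition is the observation that the variance constant factors out of a homogeneous linear elliptic equation with zero right-hand side.
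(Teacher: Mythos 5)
Your argument is correct and matches the paper's (informal) justification exactly: the paper offers no separate proof block for Proposition \ref{main_bull}, only the prefatory remark that the variance "reflects only a time change of the underlying Brownian motion" and "in terms of the equation" can be factored out when $g=0$, which is precisely your pair of arguments. You also correctly interpreted $\overline{u}$ as the solution of (\ref{main_final_hom}) rather than (\ref{main_final_eq}) as the proposition's statement literally reads, which appears to be a typo in the paper.
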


The following Theorem is then an immediate consequence of \cite[Theorem~7.5]{F3}, since the event on which the statement was obtained in \cite{F3} contains the $\Omega_0$ defined in (\ref{main_mainevent}) as a subset.

\begin{thm}\label{main_close}  Assume (\ref{steady}), (\ref{constants}), (\ref{main_final_continuous}) and $g=0$.  For every $\omega\in\Omega_0$, the respective solutions $u^\epsilon$ and $\overline{u}$ of (\ref{main_final_eq}) and (\ref{main_final_hom}) satisfy $$\lim_{\epsilon\rightarrow 0}\norm{u^\epsilon-\overline{u}}_{L^\infty(\overline{U})}=0.$$\end{thm}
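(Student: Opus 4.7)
The plan is to reduce the statement directly to the exit-distribution homogenization result \cite[Theorem~7.5]{F3}. When $g=0$, the representation $u^\epsilon(x)=E_{x/\epsilon,\omega}(f(\epsilon X_{\tau^\epsilon}))$ immediately gives the a priori bound $\norm{u^\epsilon}_{L^\infty(\overline{U})}\leq\norm{f}_{L^\infty(\partial U)}$, so the principal difficulty which occupies the rest of this paper, namely obtaining a uniform $L^\infty$-control in the presence of a drift singular in $\epsilon^{-1}$, simply disappears. Moreover, Proposition \ref{main_bull} guarantees that, in the case $g=0$, the homogenized solution $\overline{u}$ coincides with the solution of the Laplace equation with boundary data $f$, independently of the effective diffusivity. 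Consequently, any statement proving convergence of $u^\epsilon$ to the harmonic extension of $f$ across $U$ will suffice to identify the limit.

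The first task is therefore to verify that the event $\Omega_0$ constructed in (\ref{main_mainevent}) is contained in the subset of full probability on which \cite[Theorem~7.5]{F3} was obtained. The good event of \cite{F3} is built by the same Borel--Cantelli scheme, enforcing Controls \ref{Holder} and \ref{localization} for each scale $L_m$ on a lattice whose size is dictated by the containment of $U/\epsilon$ in a box of side length comparable to $L_{n+2}^2$ whenever $L_n\leq 1/\epsilon<L_{n+1}$. Since the events $A_n$ of (\ref{main_event}) impose precisely these Controls on $L_m\mathbb{Z}^d\cap[-L_{n+2}^2,L_{n+2}^2]^d$ for every $n-\overline{m}\leq m\leq n+2$, and since the choice of $\overline{m}$ in (\ref{prob_m}) is strictly stronger than that used in \cite{F3} (as remarked after its definition), the inclusion $\Omega_0\subset\Omega_{\cite{F3}}$ is immediate. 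Applying \cite[Theorem~7.5]{F3} on $\Omega_0$ and reconciling the limit via Proposition \ref{main_bull} then completes the proof.

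Were one to prefer a self-contained derivation, the proof would proceed by retracing the outline (\ref{main_outline_1})--(\ref{main_outline_14}) of Theorem \ref{main_main}, but with $g=0$ and $f\neq 0$, which causes every step to simplify substantially. In particular, one would use Proposition \ref{main_time} to replace $\tau^\epsilon$ by the discrete proxy $\tau^{\epsilon,n}_1$, invoke Corollary \ref{couple_cor} to transfer $f(\epsilon X_{\tau^{\epsilon,n}_1})$ to $f(\epsilon\overline{X}_{\overline{T}^{\epsilon,n}_2})$ on the good event (using the modulus $\sigma_f$ in place of $\norm{Dg}_{L^\infty}$), and then appeal to Proposition \ref{main_bull} to eliminate any residual dependence on $\abs{\alpha_{n-\overline{m}}-\overline{\alpha}}$. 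The only genuine point requiring attention is the continuity-of-$f$ step at the coupling, where the Chebyshev-type estimate of Corollary \ref{couple_cor} must be combined with a splitting of $\partial U$ according to whether the coupling has failed; this is routine and presents no new obstacle given the control already established in Section \ref{section_coupling}. The main obstacle of the paper — the unbounded expectation $\mathbb{E}(\epsilon^2\tau^\epsilon)$ at the quenched level — never enters because the only quantity being averaged is the bounded function $f$.
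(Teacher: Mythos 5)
Your proposal is correct and coincides with the paper's own argument: the theorem is deduced by citing \cite[Theorem~7.5]{F3} together with the observation that the good event there contains $\Omega_0$ (a consequence of the stronger choice of $\overline{m}$), and Proposition \ref{main_bull} identifies the limit with the harmonic extension. The additional sketch of a self-contained derivation along the lines of Theorem \ref{main_main} is sound but not used by the paper.
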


The final theorem is then an immediate consequence of Theorem \ref{main_main}, Theorem \ref{main_close} and linearity.

\begin{thm}\label{main_end}  Assume (\ref{steady}), (\ref{constants}), (\ref{main_final_continuous}).  For every $\omega\in\Omega_0$, the respective solutions $u^\epsilon$ and $\overline{u}$ of (\ref{main_final_eq}) and (\ref{main_final_hom}) satisfy $$\lim_{\epsilon\rightarrow 0}\norm{u^\epsilon-\overline{u}}_{L^\infty(\overline{U})}=0.$$\end{thm}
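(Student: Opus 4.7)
The plan is to exploit the linearity of the equations (\ref{main_final_eq}) and (\ref{main_final_hom}) to reduce Theorem \ref{main_end} to the two special cases already proven. Specifically, for each $\omega\in\Omega$ and $\epsilon>0$, I would decompose $u^\epsilon = u^\epsilon_1 + u^\epsilon_2$, where $u^\epsilon_1$ solves (\ref{main_final_eq}) with the given interior data $g$ and zero boundary data, and $u^\epsilon_2$ solves (\ref{main_final_eq}) with zero interior data and the given boundary data $f$. Analogously, decompose $\overline{u}=\overline{u}_1+\overline{u}_2$, where $\overline{u}_1$ solves the homogenized problem (\ref{main_hom_1}) and $\overline{u}_2$ solves (\ref{main_final_hom}) with $g=0$. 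The existence and uniqueness of both decompositions, as well as the pointwise additivity of the probabilistic representation formulas, follow from the well-posedness theory recalled in Section \ref{preliminaries}.

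Having performed the splitting, I would apply the triangle inequality to obtain, for every $\omega\in\Omega$ and $\epsilon>0$,
$$\norm{u^\epsilon-\overline{u}}_{L^\infty(\overline{U})}\leq \norm{u^\epsilon_1-\overline{u}_1}_{L^\infty(\overline{U})}+\norm{u^\epsilon_2-\overline{u}_2}_{L^\infty(\overline{U})}.$$
The first term on the righthand side vanishes on the event $\Omega_0$ as $\epsilon\rightarrow 0$ by Theorem \ref{main_arbitrary}, which handles the case of zero boundary data and continuous interior data $g\in\C(\overline{U})$. The second term vanishes on $\Omega_0$ as $\epsilon\rightarrow 0$ by Theorem \ref{main_close}, which handles the case of zero interior data and continuous boundary data $f\in\C(\partial U)$.

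The essential point making this combination work is that both Theorem \ref{main_arbitrary} and Theorem \ref{main_close} are established on the same full-probability event $\Omega_0$ defined in (\ref{main_mainevent}), which as remarked after its definition is independent of the domain and the data. Consequently, no additional measure-theoretic intersection is required, and the uniform convergence on $\overline{U}$ holds on $\Omega_0$ itself. There is no substantive obstacle in this argument; it is a direct corollary of the two preceding theorems together with linearity. The only point requiring care is a clean verification that the probabilistic representations decompose additively under the splitting, which is immediate from the representation formulas recorded in the Remark on Existence and Uniqueness of Section \ref{preliminaries} and the linearity of the expectation.
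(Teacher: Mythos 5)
Your proof is correct and coincides with the paper's argument, which likewise invokes linearity together with the two preceding results on $\Omega_0$ (the paper cites Theorem \ref{main_main} and Theorem \ref{main_close}, though strictly it is Theorem \ref{main_arbitrary} — the extension of Theorem \ref{main_main} to $g\in\C(\overline{U})$ — that is needed, exactly as you have used it). Your remark that both ingredients hold on the single full-probability event $\Omega_0$, so no further intersection is required, correctly identifies the only point demanding care.
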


\section{The Rate of Homogenization}\label{section_rate}

An algebraic rate for the convergence established in Theorem \ref{main_end} is now obtained.  The result will be shown first for boundary data which is the restriction of a bounded, uniformly continuous function and interior data which is the restriction of a bounded, Lipschitz function.  \begin{equation}\label{rate_restrict} \textrm{Assume}\;\;f\in\BUC(\mathbb{R}^d)\;\;\textrm{and}\;\;g\in\Lip(\mathbb{R}^d).\end{equation}  The moduli of continuity will be denoted $\sigma_f$ and $Dg$.  Namely, for each $x,y\in\mathbb{R}^d$, \begin{equation}\label{rate_moduli} \abs{f(x)-f(y)}\leq\sigma_f(\abs{x-y})\;\;\textrm{and}\;\;\abs{g(x)-g(y)}\leq \norm{Dg}_{L^\infty(\mathbb{R}^d)}\abs{x-y}.\end{equation}  A rate for the convergence in the case $g=0$ was established in \cite[Theorem~8.1]{F3}.

\begin{thm}\label{rate_old}  Assume (\ref{steady}), (\ref{constants}), (\ref{rate_restrict}) and $g=0$.  There exists $C>0$ and $c_1,c_2>0$ such that, for every $\omega\in\Omega_0$, for all $\epsilon>0$ sufficiently small depending on $\omega$, the respective solutions $u^\epsilon$ and $\overline{u}$ of (\ref{main_final_eq}) and (\ref{main_final_hom}) satisfy $$\norm{u^\epsilon-\overline{u}}_{L^\infty(\overline{U})}\leq C\norm{f}_{L^\infty(\mathbb{R}^d)} \epsilon^{c_1}+C\sigma_f(\epsilon^{c_2}).$$\end{thm}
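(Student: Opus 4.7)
The plan is to observe that Theorem \ref{rate_old} is a direct restatement of \cite[Theorem~8.1]{F3}. The task therefore reduces to verifying that the subset $\Omega_0$ of full probability defined in (\ref{main_mainevent}) is contained in the corresponding event of \cite{F3}. This containment was already recorded, in the non-quantitative setting, immediately before Theorem \ref{main_close}: the constants $L_n$, $a$, $\delta$ and $\overline{m}$ fixed in Section \ref{inductive} of the present paper are chosen at least as strong as those used in \cite{F3}, and in particular the strengthened choice of $\overline{m}$ in (\ref{prob_m}) costs nothing in the rate.

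For the reader's benefit, the argument of \cite{F3} may be sketched as follows. Since $g=0$, the representation formula reduces to $u^\epsilon(x)=E_{x/\epsilon,\omega}(f(\epsilon X_{\tau^\epsilon}))$, so the $L^\infty$-bound $\norm{u^\epsilon}_{L^\infty(\overline{U})}\leq\norm{f}_{L^\infty(\mathbb{R}^d)}$ is immediate and no counterpart to the exit-time tail analysis of Section \ref{section_exit_time} is needed to keep solutions bounded. Fix $L_n\leq 1/\epsilon<L_{n+1}$ and $\omega\in A_n$. One first replaces $\tau^\epsilon$ by the discrete proxy $\tau^{\epsilon,n}_1$ of (\ref{end_tau1}) via Proposition \ref{main_time} and Control \ref{main_localization}: on the failure event one pays $\norm{f}_{L^\infty(\mathbb{R}^d)}$ times a polynomially small probability, while on the success event the difference of the two $f$-values is controlled by $\sigma_f(\tilde{D}_{n-\overline{m}}/L_n)$.

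Next, Corollary \ref{couple_cor} couples the discrete quenched process with a Brownian motion $\overline{X}_k$ of variance $\alpha_{n-\overline{m}}$. Outside the bad event $C_n$ one has $\abs{\epsilon X_k-\epsilon \overline{X}_k}\leq \epsilon L_{n-\overline{m}}$, contributing an error of size $\sigma_f(\epsilon L_{n-\overline{m}})$, while on $C_n$ one pays $\norm{f}_{L^\infty(\mathbb{R}^d)}\cdot\tilde{\kappa}_{n-\overline{m}}L_{n-\overline{m}}^{16a-\delta}$. Passing from $T^{\epsilon,n}_1$ to $\overline{T}^{\epsilon,n}_2$ via Proposition \ref{main_Brownian} and then undoing the discretization produces the exit distribution of Brownian motion from $U/\epsilon$, which by Proposition \ref{main_bull} is independent of the variance and hence equals $\overline{u}(x)$.

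The main obstacle, were one to reprove the rate from scratch, is purely bookkeeping: each error term must be recast either as $C\norm{f}_{L^\infty(\mathbb{R}^d)}\epsilon^c$ with $c>0$ (contributing to $c_1$) or as $C\sigma_f(\epsilon^c)$ with $c>0$ (contributing to $c_2$). Definitions (\ref{Holderexponent}), (\ref{L}), (\ref{kappa}), (\ref{D}) and (\ref{delta}) ensure that every such exponent arising in the scheme above is strictly positive, and $c_1,c_2$ are taken smaller than the minimum of these.
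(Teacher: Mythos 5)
Your proposal matches the paper exactly: Theorem \ref{rate_old} is not reproved here but simply cited as \cite[Theorem~8.1]{F3}, with the event inclusion $\Omega_0\subset\Omega_0^{F3}$ (noted in the paper immediately before Theorem \ref{main_close}, and enabled by the strengthened choice of $\overline{m}$ in (\ref{prob_m})) justifying the transfer. The additional sketch you give of the argument from \cite{F3} is accurate in outline and consistent with the machinery of Sections \ref{section_coupling}--\ref{section_main}, but it is supplementary to what the paper itself records.
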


The following establishes a similar result in the case $f=0$, and follows quickly from the analysis carried out in Theorem \ref{main_main}.

\begin{thm}\label{rate_new}  Assume (\ref{steady}), (\ref{constants}), (\ref{rate_restrict}) and $f=0$.  There exists $C>0$ and $c_3,c_4>0$ such that, for every $\omega\in\Omega_0$, for all $\epsilon>0$ sufficiently small depending on $\omega$, the respective solutions $u^\epsilon$ and $\overline{u}$ of (\ref{main_final_eq}) and (\ref{main_final_hom}) satisfy $$\norm{u^\epsilon-\overline{u}}_{L^\infty(\overline{U})}\leq C\norm{g}_{L^\infty(\mathbb{R}^d)} \epsilon^{c_3}+C\norm{Dg}_{L^\infty(\mathbb{R}^d)}\epsilon^{c_4}.$$\end{thm}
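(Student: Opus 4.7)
The plan is to simply re-read the proof of Theorem~\ref{main_main} and observe that every error term in the final estimate~(\ref{main_main_61}) is already an explicit negative power of $L_n$ with $L_n \leq \frac{1}{\epsilon}<L_{n+1}$. In other words, the analysis of Theorem~\ref{main_main} did not merely prove convergence to zero, it produced a quantitative algebraic bound, and the task here is only to translate powers of $L_n$ into powers of $\epsilon$ and to isolate the dependence on $\|g\|_{L^\infty(\mathbb{R}^d)}$ and $\|Dg\|_{L^\infty(\mathbb{R}^d)}$.

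First I would fix $\omega\in\Omega_0$ and take $\epsilon$ small enough so that $L_n\leq\frac{1}{\epsilon}<L_{n+1}$ falls in the regime where $\omega\in A_n$, $2\tilde{D}_{n-\overline{m}}<\frac{r_0L_n}{2}$, and $L_{n+1}\overline{U}\subset[-\tfrac12 L_{n+2}^2,\tfrac12 L_{n+2}^2]^d$, just as at the beginning of the proof of Theorem~\ref{main_main}. Since $f=0$, the solution $u^\epsilon$ of (\ref{main_final_eq}) is exactly the solution of (\ref{main_eq_1}), and the estimate (\ref{main_main_61}) applies pointwise on $\overline{U}$. That estimate is, term by term, a sum of quantities of the form $L_n^{-p}$ and $L_{n-\overline{m}}^{-q}$ and $\tilde{D}_{n-\overline{m}}/L_n$, each with a strictly positive exponent once the definitions (\ref{Holderexponent}), (\ref{L}), (\ref{kappa}), (\ref{D}) and (\ref{delta}) are invoked.

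Next I would convert each such factor to a power of $\epsilon$. The relation $\frac{1}{\epsilon}<L_{n+1}\leq 2L_n^{1+a}$ from (\ref{L}) yields $L_n^{-1}\leq (2\epsilon)^{1/(1+a)}$, so $L_n^{-p}\leq C\epsilon^{p/(1+a)}$ for any $p>0$; the same applies to $L_{n-\overline{m}}^{-q}$ and, using $\tilde{D}_{n-\overline{m}}=L_{n-\overline{m}}\tilde\kappa_{n-\overline{m}}$ together with the fact that $\tilde\kappa_{n-\overline{m}}$ is eventually dominated by every positive power of $L_n$, to $\tilde D_{n-\overline{m}}/L_n$ as well. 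Collecting the $\|g\|_{L^\infty(\mathbb{R}^d)}$ terms and setting $c_3$ equal to the minimum of the resulting $\epsilon$-exponents (say $c_3=\min\{(1+\tfrac{9}{10})\delta,\ a,\ 2a,\ \tfrac{10a}{1+a}-4a-2a^2,\ \tfrac{\delta}{2}-21a,\ 5a\}/(1+a)$, all positive by (\ref{Holderexponent}) and (\ref{delta})) gives the $\|g\|_{L^\infty}\epsilon^{c_3}$ contribution. The $\|Dg\|_{L^\infty(\mathbb{R}^d)}$ term appears only through the single bound $L_n^{-5a}$, which becomes $\epsilon^{5a/(1+a)}$, so we may take $c_4=5a/(1+a)$.

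There is essentially no new obstacle: every inequality in the proof of Theorem~\ref{main_main} was already quantitative, the cutoffs $\exp(-\tilde\kappa_{n-\overline{m}})$ were absorbed into algebraic rates in that proof, and the linear dependence on $\|g\|_{L^\infty}$ and $\|Dg\|_{L^\infty}$ is visible line by line. The only mild bookkeeping point is to verify that the sub-Gaussian/exponential tail factors (such as $(\frac{L_{n+2}}{L_n})^2\exp(-\tilde\kappa_{n-\overline{m}})$) continue to be dominated by an algebraic power of $L_n$, which was already observed during the proof via the choice of $\overline{m}$ in (\ref{main_mbar}). Combining these remarks yields the stated bound and completes the argument; the precise values of $c_3$ and $c_4$ are then explicit in terms of the parameters $a$ and $\delta$.
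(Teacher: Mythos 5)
Your proposal is correct and follows essentially the same approach as the paper's own proof: both fix $\omega\in\Omega_0$, take $\epsilon$ small so that the hypotheses of the proof of Theorem~\ref{main_main} hold at scale $L_n\leq\frac{1}{\epsilon}<L_{n+1}$, and then read off the already-quantitative estimate~(\ref{main_main_61}), converting the powers of $L_n$, $L_{n-\overline{m}}$, and $\tilde{D}_{n-\overline{m}}/L_n$ into positive powers of $\epsilon$ via the definitions~(\ref{Holderexponent}), (\ref{L}), (\ref{kappa}), (\ref{D}). The precise exponents you list are slightly loose in places (e.g.\ $\frac{L_{n+1}}{L_{n+2}}\leq CL_{n+1}^{-a}<C\epsilon^a$, so the extra factor $1/(1+a)$ is not needed there), but this only affects the constants $c_3,c_4$ and not the validity of the argument.
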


\begin{proof}  Fix $\omega\in\Omega_0$ and $n_1\geq \overline{m}$ such that, for all $n\geq n_1$, for $r_0$ the constant quantifying the exterior ball condition, $$2\tilde{D}_{n-\overline{m}}<\frac{r_0L_n}{2}\;\;\textrm{and}\;\;\omega\in A_n.$$  Then, fix $\epsilon_1>0$ sufficiently small so that, whenever $0<\epsilon<\epsilon_1$ satisfies $L_n\leq \frac{1}{\epsilon}<L_{n+1}$ it follows that $n\geq n_1$.  Furthermore, using the boundedness of the domain $U$, choose $0<\epsilon_2<\epsilon_1$ such that, whenever $0<\epsilon<\epsilon_2$ and $L_n\leq \frac{1}{\epsilon}<L_{n+1}$, $$L_{n+1}U\subset [-\frac{1}{2}L_{n+2}^2, \frac{1}{2}L_{n+2}^2]^d.$$  These conditions guarantee, whenever $0<\epsilon<\epsilon_2$, the conclusion of line (\ref{main_main_61}) of Theorem \ref{main_main}.

Precisely, for every $0<\epsilon<\epsilon_2$ satisfying $L_n\leq \frac{1}{\epsilon}<L_{n+1}$, for $C>0$ independent of $n$, \begin{multline}\label{rate_new_1}  \norm{u^\epsilon(x)-\overline{u}(x)}_{L^\infty(\overline{U})}\leq \\ C\norm{g}_{L^\infty(\mathbb{R}^d)}(L_{n-\overline{m}}^{-(1+\frac{9}{10})\delta}+\frac{L_{n+1}}{L_{n+2}}+(\frac{L_{n-1}}{L_n})^2+L_n^{4a+2a^2-\frac{10a}{1+a}}+L_n^{21a-\frac{\delta}{2}}+\frac{\tilde{D}_{n-\overline{m}}}{L_n}+L_n^{-5a}) \\+ C\norm{Dg}_{L^\infty(\mathbb{R}^d)}L_n^{-5a}.\end{multline}  The definitions (\ref{Holderexponent}), (\ref{L}), (\ref{D}) imply that, since $L_n\leq \frac{1}{\epsilon}<L_{n+1}$, there exists $c_3>0$ satisfying, for $C>0$ independent of $n$, \begin{equation}\label{rate_new_2}L_{n-\overline{m}}^{-(1+\frac{9}{10})\delta}+\frac{L_{n+1}}{L_{n+2}}+(\frac{L_{n-1}}{L_n})^2+L_n^{4a+2a^2-\frac{10a}{1+a}}+L_n^{21a-\frac{\delta}{2}}+\frac{\tilde{D}_{n-\overline{m}}}{L_n}+L_n^{-5a}\leq C\epsilon^{c_3}.\end{equation}  And, definitions (\ref{Holderexponent}) and (\ref{L}) imply, using $L_n\leq \frac{1}{\epsilon}<L_{n+1}$, the existence of $c_4>0$ satisfying, for $C>0$ independent of $n$,  \begin{equation}\label{rate_new_3} L_n^{-5a}\leq C\epsilon^{c_4}.\end{equation}

Therefore, in combination (\ref{rate_new_1}), (\ref{rate_new_2}) and (\ref{rate_new_3}) yield, for all $0<\epsilon<\epsilon_2$, for $C>0$ independent of $0<\epsilon<\epsilon_2$,
$$\norm{u^\epsilon(x)-\overline{u}(x)}_{L^\infty(\overline{U})}\leq C\norm{g}_{L^\infty(\mathbb{R}^d)} \epsilon^{c_3}+C\norm{Dg}_{L^\infty(\mathbb{R}^d)}\epsilon^{c_4},$$ which completes the argument.  \end{proof}

The following statement establishes an algebraic rate of convergence for boundary and interior data which are respectively the restrictions of a bounded, uniformly continuous function and a bounded, Lipschitz function.  This requirement is removed for smooth domains in Theorem \ref{rate_smooth_domain}.  The proof follows immediately from Theorem \ref{rate_old}, Theorem \ref{rate_new} and linearity.

\begin{thm}\label{rate_main}  Assume (\ref{steady}), (\ref{constants}) and (\ref{rate_restrict}).  There exists $C>0$ and $c_1, c_2, c_3,c_4>0$ such that, for every $\omega\in\Omega_0$, for all $\epsilon>0$ sufficiently small depending on $\omega$, the respective solutions $u^\epsilon$ and $\overline{u}$ of (\ref{main_final_eq}) and (\ref{main_final_hom}) satisfy $$\norm{u^\epsilon-\overline{u}}_{L^\infty(\overline{U})}\leq C(\norm{f}_{L^\infty(\mathbb{R}^d)}\epsilon^{c_1}+\sigma_f(\epsilon^{c_2})+\norm{g}_{L^\infty(\mathbb{R}^d)} \epsilon^{c_3}+\norm{Dg}_{L^\infty(\mathbb{R}^d)}\epsilon^{c_4}).$$\end{thm}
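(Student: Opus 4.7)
The plan is to reduce Theorem \ref{rate_main} to Theorem \ref{rate_old} and Theorem \ref{rate_new} by exploiting the linearity of equation (\ref{main_final_eq}) in the data. Write $u^\epsilon = u^\epsilon_f + u^\epsilon_g$, where $u^\epsilon_f$ solves (\ref{main_final_eq}) with boundary data $f$ and zero righthand side, while $u^\epsilon_g$ solves (\ref{main_final_eq}) with zero boundary data and righthand side $g$. Analogously, decompose the homogenized solution as $\overline{u} = \overline{u}_f + \overline{u}_g$, where $\overline{u}_f$ solves (\ref{main_final_hom}) with boundary data $f$ and zero righthand side, and $\overline{u}_g$ with zero boundary data and righthand side $g$. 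The well-posedness statements recalled in the Remark on Existence and Uniqueness, together with the uniqueness of solutions, guarantee that these decompositions are valid.

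Next, I would apply the triangle inequality to obtain
\begin{equation*}
\norm{u^\epsilon - \overline{u}}_{L^\infty(\overline{U})} \leq \norm{u^\epsilon_f - \overline{u}_f}_{L^\infty(\overline{U})} + \norm{u^\epsilon_g - \overline{u}_g}_{L^\infty(\overline{U})}.
\end{equation*}
For the first term, since $f \in \BUC(\mathbb{R}^d)$ and the righthand side is zero, Theorem \ref{rate_old} applies on the event $\Omega_0$ and yields, for all $\epsilon > 0$ sufficiently small depending on $\omega$ and for constants $c_1, c_2 > 0$ independent of $\omega$ and $\epsilon$,
\begin{equation*}
\norm{u^\epsilon_f - \overline{u}_f}_{L^\infty(\overline{U})} \leq C \norm{f}_{L^\infty(\mathbb{R}^d)} \epsilon^{c_1} + C \sigma_f(\epsilon^{c_2}).
\end{equation*}
For the second term, since $g \in \Lip(\mathbb{R}^d)$ and the boundary data vanish, Theorem \ref{rate_new} applies on the same event $\Omega_0$ and gives, for constants $c_3, c_4 > 0$ independent of $\omega$ and $\epsilon$,
\begin{equation*}
\norm{u^\epsilon_g - \overline{u}_g}_{L^\infty(\overline{U})} \leq C \norm{g}_{L^\infty(\mathbb{R}^d)} \epsilon^{c_3} + C \norm{Dg}_{L^\infty(\mathbb{R}^d)} \epsilon^{c_4}.
\end{equation*}

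Adding these two estimates and taking $\epsilon$ small enough that both Theorem \ref{rate_old} and Theorem \ref{rate_new} apply produces the bound asserted in the statement, with the same set of full probability $\Omega_0$ serving both pieces of the decomposition. There is essentially no obstacle: the event $\Omega_0$ in (\ref{main_mainevent}) is defined independently of the data and the domain, so it is legitimate to apply both previous theorems on the same set; linearity of the elliptic operator and of the Laplacian handles the decomposition cleanly; and the smallness threshold on $\epsilon$ can be taken to be the maximum of the two thresholds supplied by Theorems \ref{rate_old} and \ref{rate_new}. Consequently the proof is a single line invoking these two theorems and the triangle inequality, and the real work has already been done in the preceding two sections.
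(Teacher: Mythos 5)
Your proposal is correct and is exactly the argument the paper intends: the paper states that Theorem \ref{rate_main} "follows immediately from Theorem \ref{rate_old}, Theorem \ref{rate_new} and linearity," which is precisely your decomposition $u^\epsilon = u^\epsilon_f + u^\epsilon_g$ combined with the triangle inequality. Your additional observations that $\Omega_0$ is data-independent and that the $\epsilon$-threshold is the maximum of the two thresholds are correct and make the argument fully explicit.
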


Theorem \ref{rate_main} is now extended to general smooth domains up to a domain dependent factor.  Observe that, in the case of the ball $U=B_r$, it follows by an explicit radial extension or, in the case that the domain $U$ is smooth, it follows from the Product Neighborhood Theorem, see Milnor \cite[Page~46]{Milnor}, that every continuous function $f\in\C(\partial U)$ and Lipschitz function $g\in\Lip(\overline{U})$ admit extensions $$\tilde{f}\in\BUC(\mathbb{R}^d)\;\;\textrm{and}\;\;\tilde{g}\in\Lip(\mathbb{R}^d)$$ satisfying, for a constant $C=C(U)$ depending only upon the domain, $$\sigma_{\tilde{f}}(s)\leq \sigma_f(Cs)\;\;\textrm{for all}\;\;s\geq 0\;\;\textrm{sufficiently small,}$$ and $$\abs{D\tilde{g}}\leq C\abs{Dg}\;\;\textrm{in a neighborhood of}\;\;U.$$  That is, for smooth domains, assumption (\ref{rate_restrict}) can always be achieved up to a domain dependent factor.  \begin{equation}\label{rate_u}\textrm{Assume}\;f\in\C(\partial U), g\in\Lip(\overline{U})\;\textrm{and that the domain}\;U\;\textrm{is smooth}.\end{equation}  The following statement is then an immediate consequence of Theorem \ref{rate_main} and the preceding remarks.

\begin{thm}\label{rate_smooth_domain}  Assume (\ref{steady}), (\ref{constants}) and (\ref{rate_u}).  There exists $C>0$, $c_1, c_2, c_3, c_4>0$ and $C_1=C_1(U)>0$ such that, for every $\omega\in\Omega_0$, for all $\epsilon>0$ sufficiently small depending on $\omega$, the respective solutions $u^\epsilon$ and $\overline{u}$ of (\ref{main_final_eq}) and (\ref{main_final_hom}) satisfy $$\norm{u^\epsilon-\overline{u}}_{L^\infty(\overline{U})}\leq C(\norm{f}_{L^\infty(\overline{U})}\epsilon^{c_1}+\sigma_f(C_1\epsilon^{c_2})+\norm{g}_{L^\infty(\overline{U})} \epsilon^{c_3}+C_1\norm{Dg}_{L^\infty(\overline{U})}\epsilon^{c_4}).$$\end{thm}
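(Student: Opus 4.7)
The plan is to deduce Theorem \ref{rate_smooth_domain} from Theorem \ref{rate_main} by constructing extensions $\tilde{f}$ and $\tilde{g}$ to all of $\mathbb{R}^d$ and then invoking the rate already established under the stronger hypothesis (\ref{rate_restrict}). The key observation is that, since the coefficients of the equations (\ref{main_final_eq}) and (\ref{main_final_hom}) only see $f$ on $\partial U$ and $g$ on $\overline{U}$, replacing these functions by any extensions to $\mathbb{R}^d$ leaves the solutions $u^\epsilon$ and $\overline{u}$ unchanged. Consequently it suffices to control the moduli and sup norms of the extensions in terms of those of $f$ and $g$, at the cost of a multiplicative domain-dependent constant $C_1=C_1(U)$.

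First, I would construct $\tilde{g}\in\Lip(\mathbb{R}^d)$ extending $g$. Since $U$ is smooth, the Product Neighborhood Theorem (Milnor \cite[Page~46]{Milnor}) provides a tubular neighborhood $V\supset\overline{U}$ and a smooth diffeomorphism identifying $V\setminus U$ with $\partial U\times[0,\delta_0)$ for some $\delta_0=\delta_0(U)>0$. Using this product structure together with a smooth cutoff supported in $V$, one obtains a bounded Lipschitz extension $\tilde{g}$ for which $\norm{\tilde{g}}_{L^\infty(\mathbb{R}^d)}\leq \norm{g}_{L^\infty(\overline{U})}$ and $\norm{D\tilde{g}}_{L^\infty(\mathbb{R}^d)}\leq C_1\norm{Dg}_{L^\infty(\overline{U})}$ for a constant $C_1$ depending only on the geometry of $U$. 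Second, I would extend $f\in C(\partial U)$ to $\tilde{f}\in\BUC(\mathbb{R}^d)$ analogously: transport $f$ across the tubular neighborhood as a constant along the normal direction, multiply by a smooth cutoff, and extend by zero outside. The resulting $\tilde{f}$ inherits, up to the geometric constant $C_1$, the modulus of continuity of $f$, in the sense that $\sigma_{\tilde{f}}(s)\leq \sigma_f(C_1 s)$ for all $s\geq 0$ sufficiently small, and $\norm{\tilde{f}}_{L^\infty(\mathbb{R}^d)}\leq \norm{f}_{L^\infty(\partial U)}$. In the special case $U=B_r$, a direct radial extension (extending $f$ and $g$ by their values on radial rays from the center) gives the same conclusion with an elementary $C_1$.

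With $(\tilde{f},\tilde{g})$ in hand, the pair satisfies (\ref{rate_restrict}), and the solutions of (\ref{main_final_eq}) and (\ref{main_final_hom}) for the restricted data $(f,g)$ agree pointwise with those for the extended data $(\tilde{f},\tilde{g})$, since the boundary and interior data values on $\partial U$ and $\overline{U}$ are unchanged. Applying Theorem \ref{rate_main} to $(\tilde{f},\tilde{g})$ yields constants $C>0$ and $c_1,c_2,c_3,c_4>0$ such that, for all $\omega\in\Omega_0$ and $\epsilon>0$ sufficiently small,
\[
\norm{u^\epsilon-\overline{u}}_{L^\infty(\overline{U})}\leq C\bigl(\norm{\tilde{f}}_{L^\infty(\mathbb{R}^d)}\epsilon^{c_1}+\sigma_{\tilde{f}}(\epsilon^{c_2})+\norm{\tilde{g}}_{L^\infty(\mathbb{R}^d)}\epsilon^{c_3}+\norm{D\tilde{g}}_{L^\infty(\mathbb{R}^d)}\epsilon^{c_4}\bigr).
\]
Substituting the bounds from the extension, the right-hand side is dominated by
\[
C\bigl(\norm{f}_{L^\infty(\partial U)}\epsilon^{c_1}+\sigma_f(C_1\epsilon^{c_2})+\norm{g}_{L^\infty(\overline{U})}\epsilon^{c_3}+C_1\norm{Dg}_{L^\infty(\overline{U})}\epsilon^{c_4}\bigr),
\]
which is the stated inequality.

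The only delicate step is the extension of $f$ with good modulus control: because $f$ is merely continuous, one cannot na\"ively convolve or reflect, and must use the product structure near $\partial U$ to propagate the modulus up to a constant $C_1$ absorbing the curvature of the boundary. I do not expect a genuine obstacle here since the smoothness of $U$ ensures that the tubular neighborhood and its diffeomorphism are bi-Lipschitz with constants depending only on $U$; the rest of the argument is a direct application of Theorem \ref{rate_main} and linearity.
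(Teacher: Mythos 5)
Your proposal is correct and follows precisely the approach of the paper: construct extensions $\tilde{f}\in\BUC(\mathbb{R}^d)$ and $\tilde{g}\in\Lip(\mathbb{R}^d)$ via the Product Neighborhood Theorem (or radial extension for the ball), preserving sup norms and moduli up to a domain-dependent factor $C_1(U)$, then apply Theorem \ref{rate_main} and substitute. The paper treats this as an immediate consequence of the extension remarks preceding the theorem statement; you have simply filled in those same details.
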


\subsection*{Acknowledgments}
I would like to thank Professors Panagiotis Souganidis and Ofer Zeitouni for many useful conversations.

\bibliography{Exit}
\bibliographystyle{plain}

\end{document}